\newtheorem{theorem}{Theorem}[section]
\newtheorem{remark}[theorem]{Remark}
\newtheorem{lemma}[theorem]{Lemma}
\newtheorem{claim}[theorem]{Claim}
\newtheorem{example}[theorem]{Example}
\newtheorem{corollary}[theorem]{Corollary}
\newtheorem{definition}[theorem]{Definition}
\newcommand{\eps}{\epsilon}
\newcommand{\N}{\mathbb{N}}
\newcommand{\Z}{\mathbb{Z}}
\newcommand{\R}{\mathbb{R}}
\newcommand{\C}{\mathcal{C}}
\newcommand{\Ca}{\mathcal{C}_\alpha}
\newcommand{\al}{\alpha}
\newcommand{\PP}{\mathscr{P}}
\newcommand{\mc}{\mathcal}
\DeclareMathOperator{\Int}{int}
\DeclareMathOperator{\maxCC}{maxCC}
\DeclareMathOperator{\bx}{box}
\DeclareMathOperator{\proj}{Proj}
\DeclareMathOperator{\CC}{CC}
\begin{document}

\title[Tiling with Boundaries]{Tiling with Boundaries: Dense digital images have large connected components}

\author{Kyle Fridberg}
\thanks{This material is based upon work supported by the National Science Foundation Graduate
Research Fellowship under Grant No. DGE–2139899}
\address{Center for Applied Mathematics, Cornell University}
\email{kof4@cornell.edu}
\date{12 December 2025}
\keywords{digital image, connected component, polyomino, tiling, packing, site perimeter}

\begin{abstract}
If most of the pixels in an $n \times m$ digital image are the same color, must the image contain a large connected component? How densely can a given set of connected components pack in $\Z^2$ without touching? We answer these two closely related questions for both 4-connected and 8-connected components. In particular, we use structural arguments to upper bound the ``white'' pixel density of infinite images whose white (4- or 8-)connected components have size at most $k$. Explicit tilings show that these bounds are tight for at least half of all natural numbers $k$ in the 4-connected case, and for \textit{all} $k$ in the 8-connected case. We also extend these results to finite images. 

To obtain the upper bounds, we define the \textit{exterior site perimeter} of a connected component and then leverage geometric and topological properties of this set to partition images into nontrivial regions called \textit{polygonal tiles}. Each polygonal tile contains a single white connected component and satisfies a certain maximality property. We then use isoperimetric inequalities to precisely bound the area of these tiles. The solutions to these problems represent new statistics on the connected component distribution of digital images. 
\end{abstract}
\maketitle
\section{Introduction}

A digital image is a subset of the square grid ($\Z^2$) where each grid cell has a specified color. These cells are referred to as pixels. Connected components---maximal monochromatic sets of pixels satisfying a connectivity criterion---are the simplest nontrivial objects that appear in images. Consequently, the study of connected components plays a central role in image analysis and computer vision, mathematical morphology, percolation theory, path planning, combinatorics, and many other fields \cite{Grimmett, KR}. In these contexts, 4-connected components and 8-connected components appear most naturally (e.g., see Fig. \ref{fig:conncomp}). 

\begin{figure}[ht]
\centering
\includegraphics[width=0.5\textwidth]{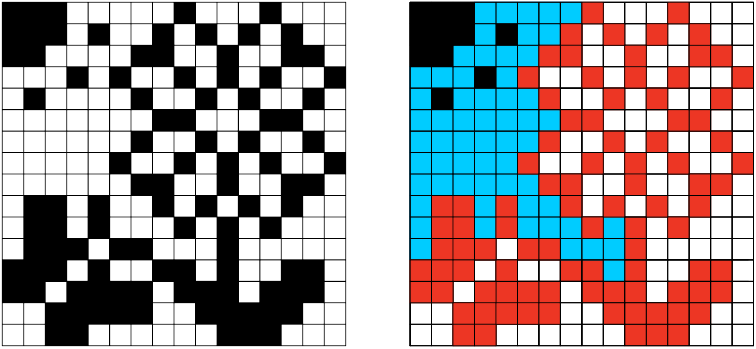}
\caption{Example of a $16 \times 16$ binary image (left), with a white 4-connected component highlighted in blue and a black 8-connected component highlighted in red (right). All $4$-connected components are $8$-connected, but the converse is not true.}
\label{fig:conncomp}
\end{figure}

We are interested in understanding how the size of the largest ``white'' component in an image is related to the white pixel density---i.e., the fraction of the image that is colored white. Specifically, we ask the following two (closely related) questions for both 4-connected and 8-connected components.

\begin{enumerate}
    \item If all white connected components in an image have size at most $k$, how large can the white pixel density be? \label{q1}
    \item If the white pixel density of an image is at least $d$, how small can the largest white connected component be? \label{q2}
\end{enumerate} 

Our main results consist of a nearly complete solution to both \eqref{q1} and \eqref{q2} for infinite images (colorings of $\Z^2$), and tight bounds on \eqref{q2} in the case where images are finite. The white pixel density of an infinite image is defined in the standard way (Definition \ref{def:rho}). Let $\al \in \{4,8\}$ be a parameter denoting either $4$- or $8$-connectivity. To describe the solution to \eqref{q1} for infinite images, let $\Phi_\al(k)$ denote the supremal white pixel density of any infinite image whose white $\al$-connected components have size at most $k$. Then we have the following result.

\begin{theorem}
\label{thm:TWBsol}
For all $k \in \N := \{1,2,3,\dots\}$,
    \begin{align*}
        \Phi_8(k) &= F_8(k) := \max_{1 \leq j \leq k} ~\frac{j}{j+\lceil 2\sqrt{j}\rceil + 1},\\
        \Phi_4(k) &\leq F_4(k) := \max_{1 \leq j \leq k} ~\frac{j}{j+\frac{\lceil 2\sqrt{2j-1}\rceil}{2}}.
    \end{align*}
    For all $k \in \mc{S}$, $\Phi_4(k) = F_4(k)$, where
    \begin{equation*}
        \mc{S} = \{1, 2, 4, 5, 7, 8, 9, 12, 13, 14, 17, 18, 19, 23, 24, 25, 26, 30,\dots\}.
    \end{equation*}
    For all $k \in \mc{U}$, $\Phi_4(k) < F_4(k)$, where
    \begin{equation*}
        \mc{U} = \{3, 6, 10, 11, 15, 16, 21, 22, 28, 29,\dots\}.
    \end{equation*}
\end{theorem}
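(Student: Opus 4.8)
The plan is to establish all four assertions within one framework: a tiling-based upper bound valid for every admissible image, matched by explicit periodic constructions. First I would localize the density bound. Given an image whose white $\alpha$-components all have size at most $k$, with white set $W$, I want to assign to each white component $C$ a \emph{tile} $T(C)\subseteq\Z^2$ so that the tiles are pairwise disjoint, their union is all of $\Z^2$, the only white cells of $T(C)$ are those of $C$, and the per-tile density satisfies $|C|\le F_\alpha(k)\,|T(C)|$. Since the density $\rho$ (Definition~\ref{def:rho}) is a $\limsup$ of $|W\cap B_n|/|B_n|$ over growing boxes $B_n$, summing $|C|\le F_\alpha(k)\,|T(C)|$ over the disjoint tiles meeting $B_n$ — whose areas total at most $|B_n|$ up to a lower-order boundary term — would give $\Phi_\alpha(k)\le F_\alpha(k)$.

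The heart of the upper bound is the tile construction together with the area estimate $|T(C)|\ge |C|+\mathrm{buffer}_\alpha(|C|)$, where $\mathrm{buffer}_8(j)=\lceil 2\sqrt j\rceil+1$ and $\mathrm{buffer}_4(j)=\tfrac12\lceil 2\sqrt{2j-1}\rceil$. Starting from the exterior site perimeter $E(C)$ — the black cells bordering $C$ — I would grow each tile outward into its black buffer, splitting every black cell on a wall between two components between them; the claimed maximality property is what should make this assignment consistent (disjoint, covering) and pin down each component's share of its bounding walls. The area bound then follows from a discrete isoperimetric inequality of Harary--Harborth type, where the decisive geometric distinction is that for $\alpha=8$ a separating wall must be a full cell-layer — a single diagonal black cell still leaves the two white regions $8$-adjacent — whereas for $\alpha=4$ walls may run diagonally, so the extremal shapes and the corresponding bounds are governed by $\ell_1$ (diamond) geometry rather than $\ell_\infty$ (square) geometry. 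In either case $|C|/|T(C)|\le\max_{1\le j\le k} j/(j+\mathrm{buffer}_\alpha(j))=F_\alpha(k)$, which finishes the upper bound.

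For the matching lower bounds I would, for each $k$, exhibit a periodic image of density exactly $F_\alpha(k)$. Letting $j^\ast\le k$ maximize $j/(j+\mathrm{buffer}_\alpha(j))$, the idea is to tile $\Z^2$ by a single extremal shape of area $j^\ast$ so that every component simultaneously attains the minimal buffer. For $\alpha=8$ the extremal shapes are near-square minimal-perimeter polyominoes (a square with a partial extra row), and these tile the plane with a one-cell separating layer for every $j^\ast$, so $\Phi_8(k)=F_8(k)$ for all $k$. For $\alpha=4$ the extremal shapes are diamond-like (the $\ell_1$ balls of area $2r^2+2r+1$ and their near-relatives), and a periodic packing realizing $F_4(k)$ exists precisely when the maximizer $j^\ast$ is an \emph{achievable} size, i.e.\ admits a minimal-perimeter shape that tiles at the packing density; displaying fundamental domains for these sizes gives $\Phi_4(k)=F_4(k)$ for $k\in\mathcal{S}$.

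Finally, for $k\in\mathcal{U}$ I must prove the strict inequality $\Phi_4(k)<F_4(k)$, and this is where I expect the genuine difficulty. For such $k$ the maximizer $j^\ast$ is \emph{not} an achievable size: its minimal-perimeter shapes cannot pack edge-to-edge at the packing density without leaving surplus black cells, so although the isoperimetric bound of the second paragraph is valid, it is never attained simultaneously by all tiles. Because $\Phi_4$ is a supremum, exhibiting failed constructions does not suffice; I must instead sharpen the upper-bound argument into a stability statement — any image with density near $F_4(k)$ must have almost all tiles of size exactly $j^\ast$ and exactly minimal buffer — and then derive a contradiction from the global tiling/parity obstruction that rules out $j^\ast$. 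The expected conclusion is $\Phi_4(k)=\max\{\,j/(j+\mathrm{buffer}_4(j)) : j<k,\ j\text{ achievable}\,\}<F_4(k)$. Making this ``near-extremal forces exactly-extremal'' step quantitative and uniform over all admissible images is the main obstacle; by comparison, the tile construction and the explicit packings are routine once the isoperimetric inputs are fixed.
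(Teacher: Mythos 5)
Your upper bound and your equality cases track the paper's actual proof closely: the paper builds a \emph{polygonal tile} around each component from its exterior site perimeter (shown to be a simple digital Jordan curve), proves disjointness of tiles via a Voronoi property of the $d_\al$ metric, and lower-bounds each tile's area by Pick's theorem plus the site-perimeter isoperimetric inequalities---giving exactly your buffer values $\lceil 2\sqrt j\rceil+1$ and $\tfrac12\lceil 2\sqrt{2j-1}\rceil$; the matching constructions are lattice tilings by the tiles of the spiral components $\mc{R}_k$ and $\mc{Q}_k$. (Two cosmetic differences: the paper does not require the tiles to cover all of $\Z^2$, only to be disjoint, since the uncovered region is black; and some of the $4$-connected tilings need rotations as well as translations.)

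The genuine gap is the strict inequality for $k \in \mc{U}$, which you flag but leave without its two essential ingredients. First, the obstruction itself: you never say why the extremal shape fails to tile, and your ``achievable'' framing makes non-tiling a definition rather than a theorem---but then the sets $\mc{S}$ and $\mc{U}$ in the statement are not identified. The paper proves it concretely: by Sieben's classification of maximum-area $4$-components with given site perimeter, the unique (up to isometry) maximally dense tile for $k\in\mc{U}$ is $\PP_4(\mc{D}_{\ell,n})$; its six vertices are computed explicitly, and Reinhardt's classification of the three families of convex hexagons that tile the plane shows this hexagon is in none of them. There is no parity argument. Second, the stability step you defer as ``the main obstacle'' is in fact a soft compactness argument, not a quantitative uniform estimate: since $\PP_4(S_4(k))$ contains finitely many tile types up to isometry and all non-extremal ones have density bounded away from $F_4(k)$, any sequence of images with density $\to F_4(k)$ forces arbitrarily large squares to be covered, up to a vanishing area fraction, by disjoint isometric copies of the single extremal hexagon; finiteness of the local configuration space upgrades ``all but $\eps$'' to ``all,'' and a standard extension argument then shows the hexagon would tile the plane, a contradiction. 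Finally, your proposed closed form $\Phi_4(k)=\max\{\,f_4(j): j<k,\ j \text{ achievable}\,\}$ is false: the paper computes $\Phi_4(3)=\frac{13}{24}>\frac12$, where $\frac12$ is the best ``achievable'' density below $F_4(3)=\frac{6}{11}$; the optimum is attained by a mixed configuration (four copies of $\mc{Q}_3$ sharing a single white apical cell), not by a packing of one achievable extremal shape, so near-extremal images for $k\in\mc{U}$ are genuinely not single-shape packings.
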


Piecewise formulas for $F_4$ and $F_8$ are given in Appendix \ref{piecewise} (Lemmas \ref{lem:closedform} and \ref{lem:closedform8}), and the sets $\mc{S}$ and $\mc{U}$ are defined in Lemma \ref{lem:TWBsols}. We spend the majority of this paper building up the techniques to show $\Phi_\al(k) \leq F_\al(k)$. We then give explicit constructions that show the inequalities are tight. In Figure \ref{fig:densetiles}, we illustrate constructions that are tight with $\Phi_4(k)$ for small values of $k$.

\begin{figure}[ht]
\centering
\includegraphics[width=0.6\textwidth]{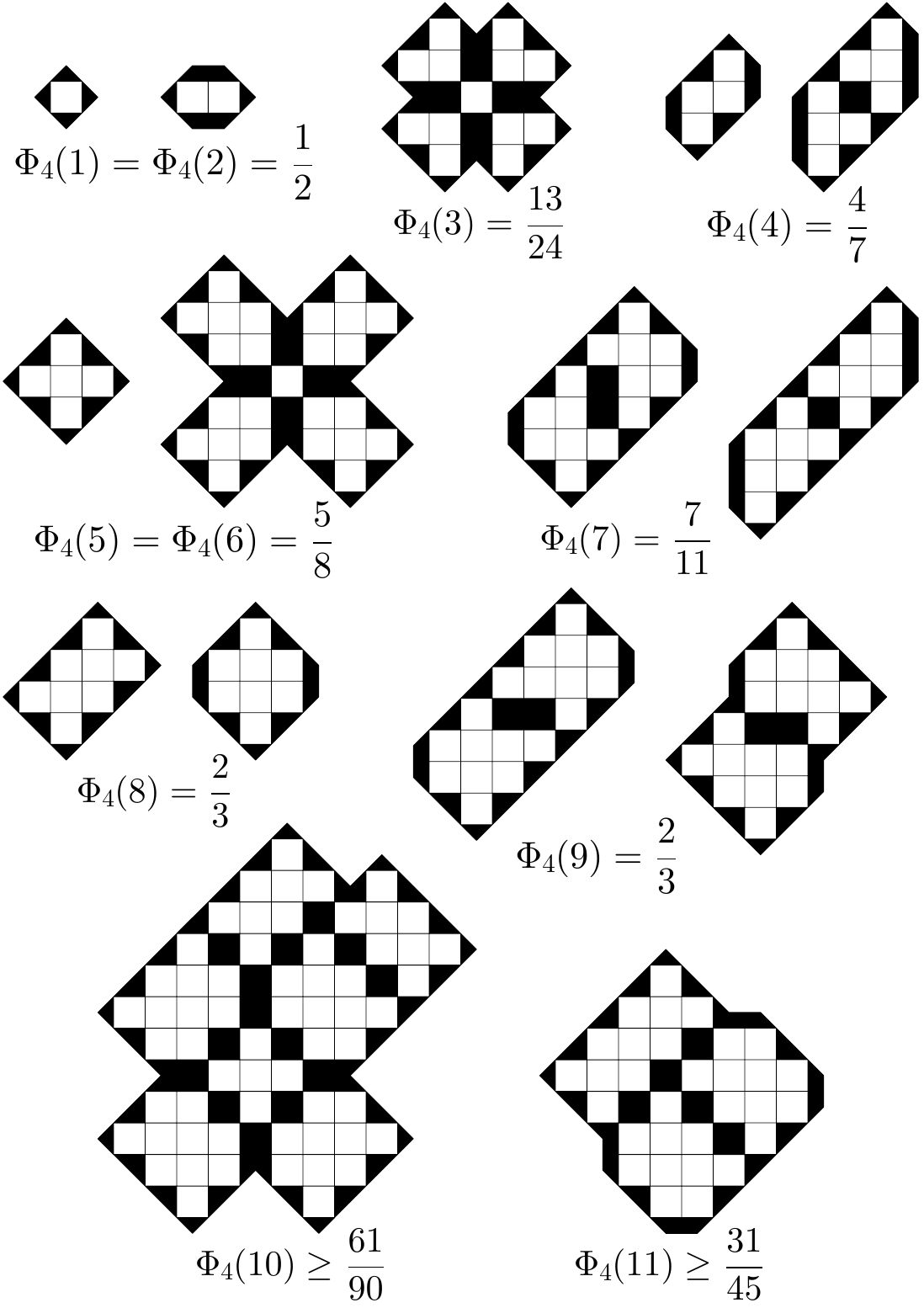}
\caption{Each of these tiles can tile the plane by translation to yield a witness to $\Phi_4(k)$ for $k \in [9]$ or a lower bound on $\Phi_4(k)$ for $k = 10,11$. The tiles that yield witnesses to $\Phi_4(1)$, $\Phi_4(3)$, and $\Phi_4(5)$ are unique.}
\label{fig:densetiles}
\end{figure} 

Note that the set $\mc{S}$ in Theorem \ref{thm:TWBsol} has natural density $\frac{1}{2}$, so we obtain the exact value of $\Phi_4(k)$ for half of all natural numbers $k$. We also compute the exact values $\Phi_4(3) = \frac{13}{24}$ and $\Phi_4(6) = \frac{5}{8}$ (Theorem \ref{thm:36}). These computations suggest that it may be difficult to exactly compute $\Phi_4(k)$ for general values of $k \in \mc{U}$. It is worth noting that the sets $\mc{U}$ and $\N \setminus \{\mc{S}\cup \mc{U}\}$ each have natural density $\frac{1}{4}$.

As a warm-up, we compute the value of $\Phi_4(1)$, and additionally deduce the value of $\Phi_4(2)$. In words, if an infinite image has white 4-connected components of size at most 1, what is the supremal white pixel density (i.e. $\Phi_4(1)$) of the image? Observe that an infinite checkerboard of alternating black and white pixels only contains white 4-connected components of size 1, and it has white pixel density $\frac{1}{2}$. Therefore $\frac{1}{2} \leq \Phi_4(1)$. On the other hand, suppose an image $I$ has white pixel density greater than $\frac{1}{2}$. An averaging argument for the density implies there must exist (infinitely many) $2 \times 2$ pixel subimages of $I$ that contain at least 3 white pixels. This implies that $I$ contains a white 4-connected component of size at least 3, hence $\Phi_4(1) = \Phi_4(2) = \frac{1}{2}$.

The simple deductive reasoning used in this warm-up does not appear to generalize, but this example highlights two important properties of $\Phi_\al(k)$, $\al \in \{4,8\}$. First, upper bounding $\Phi_\al(k)$ requires a structural argument, while lower bounding $\Phi_\al(k)$ involves explicit constructions. Second, $\Phi_\al(k)$ is not an injective function.

We now turn toward discussing question \eqref{q2}, which is essentially the inverse of question \eqref{q1}. We call the answer to \eqref{q2} the maximum guaranteed component size (MGCS), which is the minimum size of the maximal white $\al$-connected component(s) in any fixed-size image with white pixel density at least $d$. For infinite images, the MGCS is denoted by $\C_\al(d)$. In particular, any infinite image with white pixel density at least $d$ must contain a white $\al$-connected component of size at least $\C_\al(d)$, but it need not contain a white $\al$-connected component of size $\C_\al(d)+1$. By leveraging the inverse relationship between $\C_\al(d)$ and $\Phi_\al(k)$ (Lemma \ref{lem:inverses}), we obtain the following corollary of Theorem \ref{thm:TWBsol}.

\begin{corollary}
\label{cor:MGCSsol}
For all $d \in (0, 1)$,
\begin{align*}
    \mc{C}_8(d) &= F_8^{-1}(d) := \min\{k \in \N: F_8(k) \geq d\} \geq \frac{4}{(1-d)^2}-\frac{6}{1-d},\\
    \mc{C}_4(d) &\geq F_4^{-1}(d) := \min\{k \in \N: F_4(k) \geq d\} \geq \frac{2}{(1-d)^2}-\frac{4}{1-d}.
\end{align*}
Table \ref{exact} provides exact values for $\mc{C}_4(d)$ for all values of $d$ on an infinite set of half-open intervals, including $(0,\frac{61}{90}]$.
\end{corollary}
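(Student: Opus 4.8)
The plan is to combine Theorem~\ref{thm:TWBsol} with the inverse relationship of Lemma~\ref{lem:inverses}, which I will use in the form $\mc{C}_\al(d) = \min\{k \in \N : \Phi_\al(k) \geq d\}$, and then convert the resulting discrete minimization into the stated closed-form lower bounds. For $\al=8$, Theorem~\ref{thm:TWBsol} gives $\Phi_8 = F_8$ exactly, so $\mc{C}_8(d) = \min\{k : \Phi_8(k)\geq d\} = \min\{k : F_8(k) \geq d\} = F_8^{-1}(d)$ immediately. For $\al=4$, I only have $\Phi_4 \leq F_4$; since $\Phi_4(k) \geq d$ forces $F_4(k) \geq d$, the set $\{k : \Phi_4(k)\geq d\}$ is contained in $\{k : F_4(k)\geq d\}$, and passing to minima yields $\mc{C}_4(d) = \min\{k:\Phi_4(k)\geq d\} \geq \min\{k : F_4(k)\geq d\} = F_4^{-1}(d)$. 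This settles the two comparisons against $F_\al^{-1}$; the remaining work is purely an analytic lower bound on $F_\al^{-1}(d)$.

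Next I would unwind $F_\al^{-1}$. Because $F_\al(k)$ is the running maximum over $1\leq j\leq k$ of the per-size density ratios
\[
g_8(j) = \frac{j}{j+\lceil 2\sqrt{j}\rceil+1}, \qquad g_4(j) = \frac{2j}{2j+\lceil 2\sqrt{2j-1}\rceil},
\]
we have $F_\al(k)\geq d$ iff some $j\leq k$ satisfies $g_\al(j)\geq d$, so $F_\al^{-1}(d) = \min\{j\in\N : g_\al(j)\geq d\}$. It therefore suffices to show that \emph{every} $j$ with $g_\al(j)\geq d$ obeys the claimed bound; the minimal such $j$ then inherits it.

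For the $8$-connected case, $g_8(j)\geq d$ rearranges to $j(1-d)\geq d(\lceil 2\sqrt{j}\rceil+1)$. Dropping the ceiling via $\lceil 2\sqrt{j}\rceil\geq 2\sqrt{j}$ gives $(1-d)j - 2d\sqrt{j} - d\geq 0$, a quadratic in $\sqrt{j}$ whose positive root yields $j\geq \frac{d(\sqrt{d}+1)^2}{(1-d)^2}$. Writing $s=\sqrt{d}$, the elementary factorization
\[
d(\sqrt{d}+1)^2 - (6d-2) = (s-1)^2(s^2+4s+2) \geq 0
\]
shows this exceeds $\frac{6d-2}{(1-d)^2} = \frac{4}{(1-d)^2}-\frac{6}{1-d}$, which is the stated bound (vacuous, hence trivially true, when $d\leq\frac13$). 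The $4$-connected case is analogous: $g_4(j)\geq d$ reduces to $j(1-d)\geq d\sqrt{2j-1}$ after dropping the ceiling, and squaring (both sides nonnegative) produces $(1-d)^2 j^2 - 2d^2 j + d^2 \geq 0$. For $d\leq\frac12$ the target $\frac{4d-2}{(1-d)^2}$ is nonpositive, so the bound is trivial; for $d>\frac12$ the quadratic's value at $j=1$ is $1-2d<0$, so its smaller root lies below $1$ and every admissible integer $j$ must exceed the larger root $j_+ = \frac{d^2 + d\sqrt{2d-1}}{(1-d)^2}$. Substituting $t=\sqrt{2d-1}$, the difference of numerators factors as $(t-1)^2(t^2+4t+1)\geq 0$, confirming $j_+ \geq \frac{4d-2}{(1-d)^2} = \frac{2}{(1-d)^2}-\frac{4}{1-d}$.

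The main obstacle is not conceptual but bookkeeping. I must verify that dropping the ceiling never costs more than a constant, so that the leading terms $\frac{4}{(1-d)^2}$ and $\frac{2}{(1-d)^2}$ survive and the subtracted linear terms absorb the slack; and I must correctly locate $j=1$ relative to the two roots of the $4$-connected quadratic, since this sign check at $j=1$ is exactly what eliminates the spurious small-$j$ branch $j\leq j_-$ and is the only place where the regimes $d\lessgtr\frac12$ genuinely diverge. A secondary subtlety is confirming that Lemma~\ref{lem:inverses} delivers the inverse relationship as the exact equality $\mc{C}_\al(d)=\min\{k:\Phi_\al(k)\geq d\}$ rather than merely an inequality, since the equality $\mc{C}_8(d)=F_8^{-1}(d)$ (as opposed to the one-sided bound in the $4$-connected case) rests entirely on it.
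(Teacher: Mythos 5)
Your core argument is correct and follows essentially the same route as the paper: invert via Lemma \ref{lem:inverses}, observe that $F_\al^{-1}(d)$ equals the first $j$ at which the per-size ratio $f_\al(j)$ reaches $d$, drop the ceilings, and solve the resulting quadratics in $\sqrt{j}$ (for $\al=8$) and $j$ (for $\al=4$). Where you genuinely add something: the paper justifies the weaker closed-form bounds $\frac{4}{(1-d)^2}-\frac{6}{1-d}$ and $\frac{2}{(1-d)^2}-\frac{4}{1-d}$ only by remarking (in the proof of Corollary \ref{cor:bounds}) that they come from ``a Laurent expansion around $d=1$'' of the exact roots $\frac{d}{(1-\sqrt{d})^2}$ and $\frac{d^2+d\sqrt{2d-1}}{(1-d)^2}$, without ever verifying that the truncated expansion is a \emph{lower} bound. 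Your factorizations $(s-1)^2(s^2+4s+2)\geq 0$ and $(t-1)^2(t^2+4t+1)\geq 0$ supply exactly that missing verification (I checked both identities, and your sign analysis locating $j=1$ strictly between the two roots when $d>\frac{1}{2}$ is also correct), so your treatment of this step is more rigorous than the paper's.

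Two loose ends remain. First, the equality $\C_8(d)=F_8^{-1}(d)$ does not follow from the bare statement $\Phi_8=F_8$ of Theorem \ref{thm:TWBsol}: Lemma \ref{lem:inverses}(1) gives equality only when the minimizing $k$ admits an actual witness image, i.e.\ when the supremum defining $\Phi_8(k)$ is attained. You flag this correctly at the end but never discharge it; in the paper it is settled by the explicit periodic spiral tilings (Lemma \ref{lem:P8tiling}, Corollary \ref{cor:F8achieved}), which attain density $F_8(k)$ using only components of size at most $k$. Second, the corollary's last sentence---the exact values of $\C_4(d)$ in Table \ref{exact}---is part of the statement, and your proposal does not address it; it requires combining the equality case of Lemma \ref{lem:inverses} with the witness constructions for $k\in\mc{S}$ (Lemma \ref{lem:TWBsols}) and with $\Phi_4(3)=\frac{13}{24}$, $\Phi_4(6)=\frac{5}{8}$ (Theorem \ref{thm:36}). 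Both ingredients exist in the paper, so neither omission breaks your argument, but a complete proof should cite them explicitly.
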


The functions $F_4$ and $F_8$ are defined in Theorem \ref{thm:TWBsol}, and piecewise formulas for $F_4^{-1}$ and $F_8^{-1}$ are given in Appendix \ref{piecewise} (Lemmas \ref{lem:F4inv} and \ref{lem:F8inv}). Figure \ref{fig:plots} illustrates the behavior of these functions.

\begin{figure}[ht]
\centering
\includegraphics[width=0.85\textwidth]{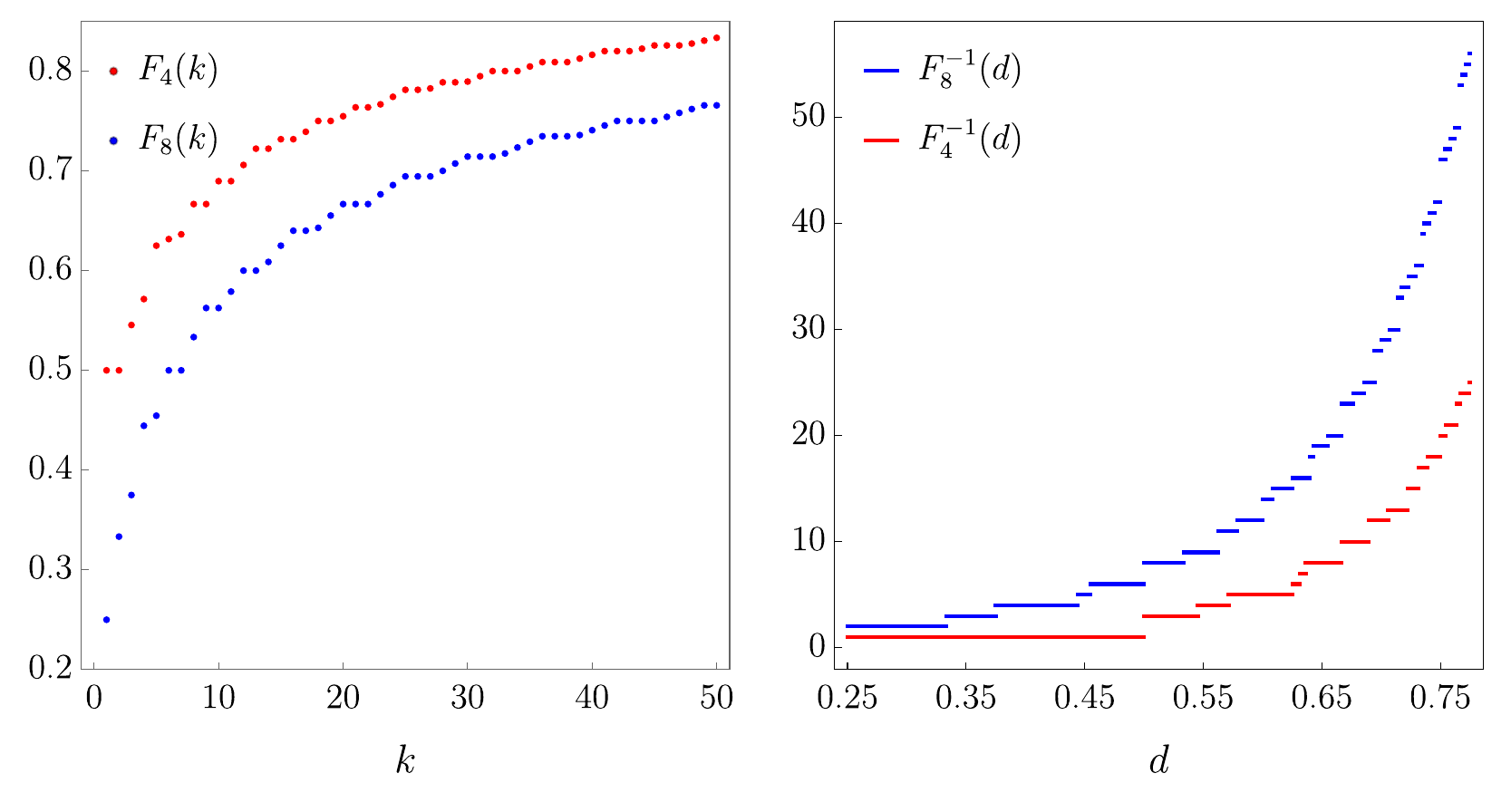}
\caption{Plot of the functions described in Theorem \ref{thm:TWBsol} (left) and Corollary \ref{cor:MGCSsol} (right).}
\label{fig:plots}
\end{figure} 

While the results we obtain for infinite images are remarkably tight, most images encountered in practice are finite. In the finite setting, we represent the maximum guaranteed component size (MGCS) by the function $\C_\al(d,n,m)$, which denotes the minimum size of the maximal white $\al$-connected component(s) in any $n \times m$ image with white pixel density at least $d$. The following tight bounds on this function represent our second main result. 

\begin{theorem}
\label{thm:bounds}
For all $d \in (0, 1]$ and $n,m \in \N$,
    \begin{align}
    \mc{C}_8(d) \geq \mc{C}_8(d,n,m) &\geq F_8^{-1}(r_8), \label{eq:C8}\\
    \mc{C}_4(d) \geq \mc{C}_4(d,n,m) &\geq F_4^{-1}(r_4), \label{eq:C4}
\end{align}
    where \begin{equation*}
        r_8(d, n, m) :=  \frac{n m d}{(n+1)(m+1)}, ~~~~ r_4(d, n, m) :=  \frac{n m d + 1}{(n+1)(m+1)}.
    \end{equation*}

\noindent For any fixed $d \in (0,1]$, there exists an $N_d \in \N$ such that all the bounds in \eqref{eq:C8} are tight for all $n,m \geq N_d$.

\noindent For any fixed $d$ that appears in the last two rows of Table \ref{exact}, there exists an $M_d \in \N$ such that all the bounds in \eqref{eq:C4} are tight for all $n,m \geq M_d$.
\end{theorem}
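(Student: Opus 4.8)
The plan is to reduce both statements about finite images entirely to the infinite-image results of Theorem~\ref{thm:TWBsol} and Corollary~\ref{cor:MGCSsol}: a tiling-with-boundary construction passes from finite to infinite and yields the lower bounds, a short averaging argument passes from infinite to finite and yields the upper bounds $\C_\al(d) \ge \C_\al(d,n,m)$, and the two are then sandwiched against each other to obtain the tightness claims.

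For the lower bounds I would argue contrapositively. Fix an $n\times m$ image $I$ of white density at least $d$, so $I$ has $W \ge nmd$ white pixels, and let $k$ be the size of its largest white $\al$-connected component (note $k\ge 1$ as $d>0$). Tile the plane periodically with fundamental domain $(n+1)\times(m+1)$, placing a copy of $I$ on an $n\times m$ sub-block and coloring the remaining width-one row and column black. A direct check shows a width-one black boundary separates the copies of $I$ for both $\al=4$ and $\al=8$, so the largest white $\al$-connected component of the resulting infinite image is still $k$. For $\al=8$ this image has density $W/((n+1)(m+1)) \ge r_8$, giving $\Phi_8(k)\ge r_8$. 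For $\al=4$ I would additionally color the single boundary corner pixel white; its four edge-neighbors are all (black) boundary pixels, so it is an isolated singleton (diagonal neighbors do not matter for $4$-connectivity), the largest component stays $k$, and the density becomes $(W+1)/((n+1)(m+1)) \ge r_4$, giving $\Phi_4(k)\ge r_4$. Since Theorem~\ref{thm:TWBsol} gives $F_8(k)=\Phi_8(k)$ and $F_4(k)\ge\Phi_4(k)$, in both cases $F_\al(k)\ge r_\al$, and as $F_\al$ is non-decreasing this forces $k\ge F_\al^{-1}(r_\al)$. Minimizing over all admissible $I$ gives $\C_\al(d,n,m)\ge F_\al^{-1}(r_\al)$.

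For the upper bound, take a periodic infinite image witnessing $\C_\al(d)$ (density at least $d$, largest component $\C_\al(d)$). A standard averaging argument over all translates of an $n\times m$ window within one period produces a window whose density is at least the average, hence at least $d$; this window is an $n\times m$ image of density at least $d$ whose largest component is at most $\C_\al(d)$, so $\C_\al(d,n,m)\le\C_\al(d)$. For tightness it then suffices to show $F_\al^{-1}(r_\al)=\C_\al(d)$ once $n,m$ are large, as this collapses $\C_\al(d)\ge\C_\al(d,n,m)\ge F_\al^{-1}(r_\al)$ to equalities. Writing $k_0:=F_\al^{-1}(d)$, by definition $F_\al(k_0-1)<d\le F_\al(k_0)$; since $r_\al\to d$ from below as $n,m\to\infty$, for large $n,m$ we have $r_\al\in(F_\al(k_0-1),d)$, which forces $F_\al^{-1}(r_\al)=k_0=F_\al^{-1}(d)$ and fixes $N_d$ (resp.\ $M_d$). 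For $\al=8$, Corollary~\ref{cor:MGCSsol} supplies $\C_8(d)=F_8^{-1}(d)=k_0$, so the sandwich closes for every $d$. For $\al=4$, one needs the \emph{exact} equality $\C_4(d)=F_4^{-1}(d)$, which holds precisely when the lower bound of Corollary~\ref{cor:MGCSsol} is attained; these are the values of $d$ recorded in Table~\ref{exact}, explaining the restriction to its last two rows.

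The main obstacle I anticipate lies in the $4$-connected tightness rather than in the construction or averaging: it depends on knowing the \emph{exact} value $\C_4(d)=F_4^{-1}(d)$, which Theorem~\ref{thm:TWBsol} guarantees only for $k\in\mc{S}$ (equivalently for $d$ in the corresponding intervals of Table~\ref{exact}), so one must check that the $d$ in the last two rows really satisfy $\C_4(d)=F_4^{-1}(d)$ and that the $+1$ in $r_4$ does not push $F_4^{-1}(r_4)$ past $k_0$. A secondary point is to justify the averaging step for the precise density in Definition~\ref{def:rho}; working with a genuinely periodic optimizer makes the average exact and removes any $\eps$-loss, so one must confirm that such a periodic optimizer exists.
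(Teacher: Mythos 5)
Your proposal follows the paper's own route step for step: your finite-to-infinite construction (black border, corner pixel white for $\al=4$ and black for $\al=8$) is exactly the one in the paper's Lemma \ref{lem:reduction}; your interval argument $F_\al(k_0-1) < r_\al < d \leq F_\al(k_0) \Rightarrow F_\al^{-1}(r_\al) = F_\al^{-1}(d)$ is precisely the left-continuity of $F_\al^{-1}$ (Claim \ref{claim:left-cont}) that the paper invokes for tightness; and the final sandwich against $\C_8(d)=F_8^{-1}(d)$ from Corollary \ref{cor:MGCSsol}, respectively the Table \ref{exact} values of $\C_4(d)$, matches the paper's argument. The lower-bound direction, including deducing $k \geq F_\al^{-1}(r_\al)$ from $F_\al(k) \geq \Phi_\al(k) \geq r_\al$, is sound.

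The genuine gap is in the upper bound $\C_\al(d) \geq \C_\al(d,n,m)$, where you take a \emph{periodic} optimizer and defer its existence as a point ``to confirm.'' This cannot be confirmed with what is known. Definition \ref{def:MGCS} only supplies some infinite image $I$ with $\rho(I)\geq d$ and $\maxCC_\al(I)=\C_\al(d)$; producing a periodic image whose density is still $\geq d$ (not merely $\geq d-\eps$) and whose components have size at most $\C_\al(d)$ amounts to knowing that the supremum defining $\Phi_\al(\C_\al(d))$ is attained by periodic images. For $\al=8$ this follows from Corollary \ref{cor:F8achieved}, but for $\al=4$ the inequality \eqref{eq:C4} is asserted for \emph{all} $d\in(0,1]$, including densities $d=\Phi_4(k)$ for values of $k$ (e.g.\ in $\mc{U}$, or where $\Phi_4(k)$ is unknown) for which attainment---periodic or otherwise---is open; your argument simply does not cover such $d$. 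The repair is your own fallback, carried out in the paper's Lemma \ref{lem:reduction}: take an arbitrary optimizer $I$ with $\rho(I)=d'\geq d$; by the liminf in Definition \ref{def:rho}, for every $\eps>0$ all sufficiently large centered squares have density at least $d'-\eps$; choose the side length divisible by $\operatorname{lcm}(n,m)$, partition exactly into $n\times m$ blocks, and average to find a block with at least $\lceil nm(d'-\eps)\rceil$ white pixels. Since pixel counts are integers, $\lceil nm(d'-\eps)\rceil \geq nmd$ once $\eps$ is small enough, so this block is an $n\times m$ image of density $\geq d$ whose components have size at most $\C_\al(d)$---no periodicity needed.
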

\begin{proof}
    Inequalities \eqref{eq:C8} and \eqref{eq:C4} are obtained by applying Lemma \ref{lem:reduction} to Corollary \ref{cor:MGCSsol}. The fact that these bounds are tight for sufficiently large $n,m$ is a direct consequence of observing that $F_8^{-1}$ and $F_4^{-1}$ are left-continuous (by Claim \ref{claim:left-cont}). 
\end{proof}

\setlength{\extrarowheight}{4.5pt}
\begin{table}[h]
\centering
\begin{tabular}{|c|c|}
\hline
\textbf{$d$} & \textbf{$\C_4(d)$} \\
\hline
$\big(0,\frac{1}{2}\big]$ & 1 \\
$\big(\frac{1}{2},\frac{13}{24}\big]$ & 3 \\
$\big(\frac{13}{24},\frac{4}{7}\big]$ & 4 \\
$\big(\frac{4}{7},\frac{5}{8}\big]$ & 5 \\
$\big(\frac{5}{8},\frac{7}{11}\big]$ & 7 \\
$\big(\frac{7}{11},\frac{2}{3}\big]$ & 8 \\
$\big(\frac{2}{3},\frac{61}{90}\big]$ & 10 \\
$\Big(\frac{4n^2-6n+2}{4n^2-2n-1},\frac{(n-1)^2+n^2}{2n^2}\Big]$ & $F_4^{-1}(d)$ \\
$\Big(\frac{4n^2-2n}{4n^2+2n-1},\frac{n}{n+1}\Big]$ & $F_4^{-1}(d)$ \\
\hline
\end{tabular}
\caption{Table of exact values of $\C_4(d)$ (for all $n \in \N_{\geq 2}$).} \label{exact}
\end{table}

\begin{figure}[ht]
    \centering
    \includegraphics[width=0.71\textwidth]{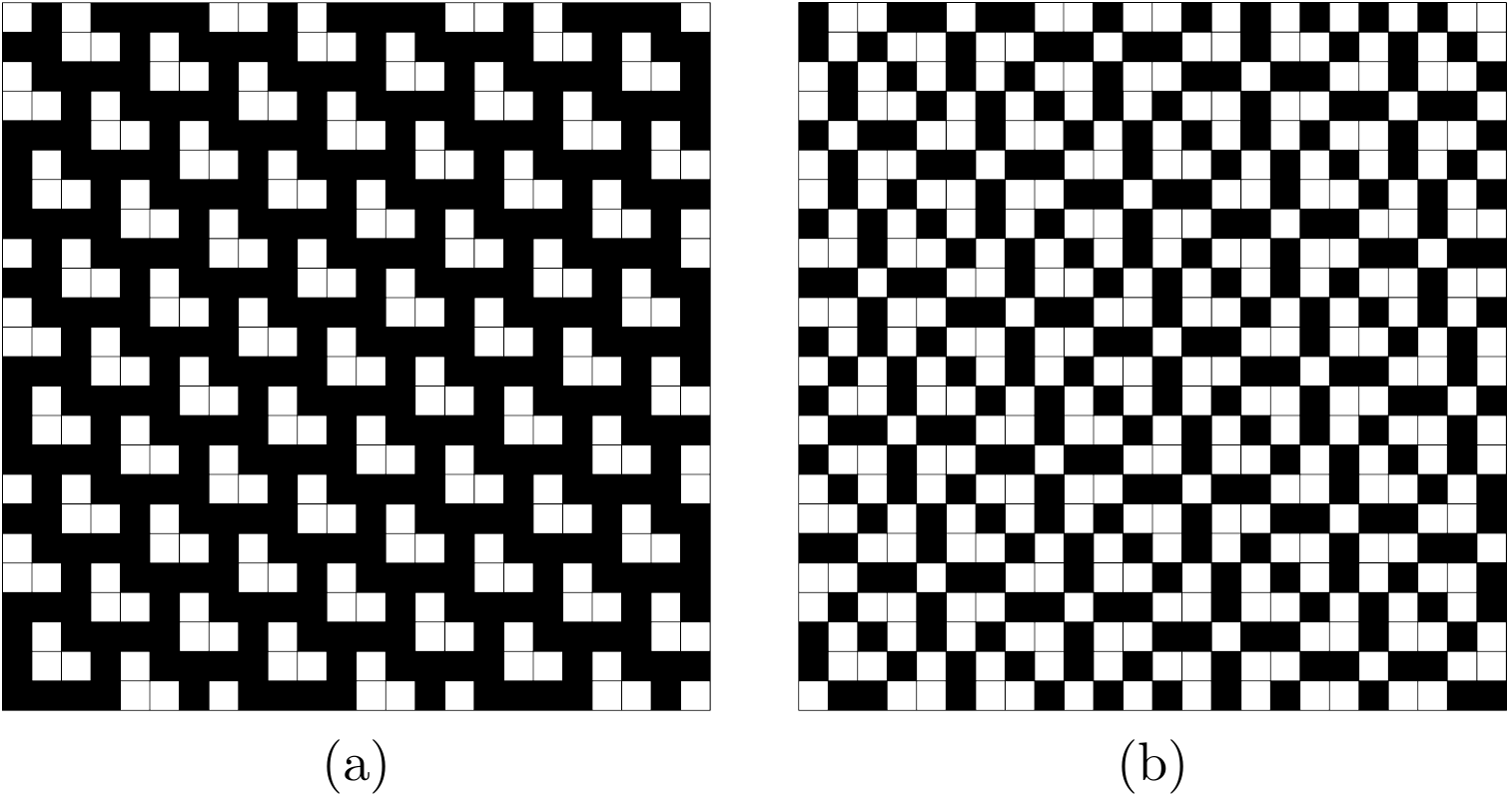}
    \caption{(a) and (b) are $24 \times 24$ pixel binary images with white pixel densities $\frac{3}{8}$ and $\frac{13}{24}$, respectively. By Theorem \ref{thm:bounds}, these parameters guarantee that (a) contains a white 8-connected component of size at least 3 and (b) contains a white 4-connected component of size at least 3. As the images show, this result is tight.}
    \label{fig:MGCSS=3}
\end{figure}

In words, Theorem \ref{thm:bounds} states that any $n \times m$ image with white pixel density at least $d$ must contain a white 4-connected component of size at least $F_4^{-1}(r_4(d,n,m))$ and a white 8-connected component of size at least $F_8^{-1}(r_8(d,n,m))$. We think of $r_4(d,n,m)$ and $r_8(d,n,m)$ as ``corrected" densities that take image size into account, and we note that $r_\al(d,n,m) \to d$  as $n,m \to \infty$. For concrete examples of images that illustrate the Theorem \ref{thm:bounds} tightness result, see Figures \ref{fig:MGCSS=3}, \ref{fig:dj-images}, and \ref{fig:5-15}.

\subsection{Related work}

To our knowledge, the questions considered here have not been studied previously, however they are related to a number of other works.

First, Bernoulli site percolation involves analyzing certain properties of the connected component distribution on bicolorings of the square grid (and other lattices). In this model, the color of each grid cell is independently assigned to be white or black by sampling a Bernoulli$(p)$ random variable, where $p$ is a fixed constant. Many properties of the resulting connected component distribution on the infinite grid have been analytically characterized \cite{Grimmett}. However, the finite setting is less well understood \cite{CDY, Richey}. Here we are interested in a deterministic (or worst-case) analogue of Bernoulli site percolation.

Second, the fixed-magnetization Ising model is a variant of the classical Ising model that is quite similar to the setup of this work. This model assigns an \textit{energy} to each function from $V \subset \Z^2$ to $\{+1, -1\}$, where the total number of $+1$ and $-1$ values in the image of $V$ are fixed. The \textit{spin configuration} $V$ is just a different notation for images with a fixed white pixel density. In the limit as the temperature parameter approaches 0 in this model, the lowest energy spin configuration minimizes the number of neighboring sites in $V$ with opposite spins, resulting in the formation of a boundary-minimizing connected component known as the Wulff shape \cite{DKS}. Boundary-minimizing components also appear in this work for a similar reason. However, in our setting these components are also size-constrained, so they form dense tilings of the grid rather than appearing in isolation.

The most closely related work is perhaps the work of van Dalen \cite{vanDalen}, which also considers the size of an image's maximum 4-connected component in an information-limited setting. Specifically, van Dalen lower bounds the size of the maximum white 4-connected component given that the sum of the boundary lengths of all white 4-connected components in an image is known. Because there is not a clear relationship between white pixel density and the sum of the connected component boundary lengths, our techniques are quite different from those used in \cite{vanDalen}.  

In terms of techniques, 4-connected component (polyomino) and 8-connected component isoperimetry results play a central role in this work \cite{A06,  BBS, HH, Sieben}. To obtain tight results, we leverage a complete description of the minimal site perimeter of components with a fixed size as well as the maximal components with a fixed site perimeter. Fortunately, these problems have been previously studied in \cite{A06} and \cite{Sieben}. Additionally, we use classical results concerning the duality of $4$-connected and $8$-connected components on $\Z^2$ \cite{Rosenfeld2, Rosenfeld}.

\subsection{Methods}

Since we are interested in monochromatic connected components, we can assume without loss of generality that all digital images are binary (black and white) for the remainder of this paper. For conciseness, we sometimes refer to $\al$-connected components, $\al \in \{4,8\}$, as ``connected components'' or just ``components.''

To leverage tools from planar geometry, we first view an image $I$ as a coloring of $\R^2$ in the natural way. We then carefully construct a partition of $I$ that depends only on the individual connected components, not on their spatial arrangement. We call these partition elements \textit{polygonal tiles}---they are precisely the disjoint, maximum-area subsets of $\R^2$ that can be uniquely assigned to each connected component without any knowledge of the other connected components in $I$. We can then upper bound the density of $I$ by the maximum density of any polygonal tile that $I$ contains. These results can be carried over to finite images via an approximate finite to infinite reduction (Lemma \ref{lem:reduction}). 

While the intuition behind this technique is clear, much of the first half of this paper will focus on making this precise. We first define the \emph{exterior site perimeter} of a white connected component, which is the minimal set of black pixels that forms a boundary around the exterior of the component. This definition is closely related to the well-studied notion of the site perimeter, though we are not aware of previous works that characterize it. The digital Jordan curve theorem (\cite{Rosenfeld}; see Theorem \ref{thm:jct}) allows us to use the exterior site perimeter to define a simple polygon in $\R^2$ that contains the entire white connected component in its interior. This is essentially the polygonal tile (e.g., see Figures \ref{fig:exampletile} and \ref{fig:exampletile2}). We then use Pick’s theorem (\cite{Rosenholtz}; see Theorem \ref{thm:Picks}) and isoperimetry results (\cite{A06, Sieben}; see Lemmas \ref{lem:minperim}, \ref{lem:minperim8}, \ref{lem:minpext}) to show that a sizable fraction of each polygonal tile must be colored black.

Representing connected components by their polygonal tiles effectively reduces the connected component packing problem on $\Z^2$ into a polygon tiling problem in $\R^2$. To show that our upper bounds are tight, we tile $\R^2$ using the polygonal tiles of two special families of connected components---one for the 4-connected case and one for the 8-connected case. These connected component families have one component of each size $k$, and this component has minimal site perimeter for its size (Fig. \ref{fig:QR}). The 8-connected case follows an elegant square spiral construction introduced by Harary and Harborth \cite{HH}, and we show an analogous diamond spiral construction in the 4-connected case. In the 8-connected case, the tiling behavior is particularly simple, and hence the maximum white pixel density is always achieved.

\subsection{Outline}

We conclude the introduction by briefly outlining the rest of the paper. In Section \ref{sec:prelims}, we record various geometric and topological results for digital images as well as providing a complete set of isoperimetric inequalities for the site perimeter of 4-connected and 8-connected components. Section \ref{sec:problemdefs} focuses on formally defining the Tiling with Boundaries (TWB) and Maximum Guaranteed Component Size (MGCS) problems and proving some basic results. We define the exterior site perimeter in Section \ref{sec:pext} and show that it is a simple cycle. This allows us to construct polygonal tiles in Section \ref{sec:PT}, leading to the desired image partition. Together, these results allow us to prove the main TWB upper bound (and MGCS lower bound) in Section \ref{sec:bounds}. In Section \ref{sec:constructions}, we use the spiral connected components to generate lattice tilings of $\R^2$ (infinite images) that prove our previous bounds are tight in most cases. In this section, we further show how to restrict these infinite images to obtain finite images where our bounds are tight. We carefully compute the values of $\Phi_4(3)$ and $\Phi_4(6)$ in Section \ref{sec:4conncomp}, and then list a few possible future research directions in Section \ref{sec:future}. 
 
\section{Preliminaries}
\label{sec:prelims}

\subsection{Binary images and connected components}

A binary image can be formally defined as a map from a subset of $\Z^2$ to the set $\{\text{black}, \text{white}\}$. We adopt the canonical coloring whenever we view a binary image as a coloring of $\R^2$---namely every unit square grid cell (centered at a point in $\Z^2$) has a uniform black or white coloring. There are a number of functions over binary images that we must formally define.

\begin{definition}
\label{def:rho}
    For a finite image $I$, the white pixel density $\rho(I)$ is the number of white pixels in $I$ divided by the total number of pixels in $I$. If $I$ is an infinite image (a coloring of $\Z^2$) and $I_{j \times j}$ is the $j \times j$ subset of $I$ that is centered at the origin, with $j$ odd, then 
\begin{equation*}
    \rho(I) := \liminf_{j\to\infty} ~\frac{\text{number of white pixels in } I_{j \times j}}{j^2}.
\end{equation*}
\end{definition}

Following the notation of Klette and Rosenfeld \cite{KR}, we define 4-connectivity and 8-connectivity in terms of two metrics: $d_4$ and $d_8$. Specifically, we define $d_4(x,y) := |x_1 - y_1| + |x_2 -y_2|$ to denote the $\ell_1$ metric and $d_8(x,y) := \max\{|x_1-y_1|,|x_2-y_2|\}$ to denote the $\ell_\infty$ metric. We say that two pixels $x$ and $y$ are 4-adjacent if $d_4(x,y) = 1$ and they are 8-adjacent of $d_8(x,y) = 1$.  Note that 4-adjacency implies 8-adjacency. Taking $\al \in \{4,8\}$, two pixels $x$ and $y$ are $\al$-connected if either $x = y$ or if they are contained in a sequence of pixels where each pixel is $\al$-adjacent to its predecessor. An $\al$-connected component is a maximal set of mutually $\al$-connected pixels that all have the same color. Importantly, we assume all connected components are finite unless otherwise stated.

Just as $\ell_1$ and $\ell_\infty$ are dual norms, $4$-connectivity and $8$-connectivity are dual notions of connectivity on $\Z^2$. For this reason---and because most of our results hold for both notions of connectivity---it is helpful to define a dual integer pair $(\al,\beta) = (4,8)$ or $(\al,\beta) = (8,4)$. Since connectivity is a fundamental property, the parameter $\al$ will become ubiquitous. If this is distracting, simply imagine $\al = 4$ and $\beta = 8$ wherever these parameters appear. 

\begin{definition}
    Let $\maxCC_\al(I)$ denote the maximum size of any white $\al$-connected component contained in image $I$. If the maximum does not exist, we set $\maxCC_\al(I) = +\infty$.
\end{definition}

\begin{definition}
    The canonical representative of a connected component $T$ is the unique translation of $T$ whose bottom-leftmost vertex is $(0,0)$. Let $\CC_\al(I)$ denote the set of canonical representatives of the white $\al$-connected components contained in an image $I$. 
\end{definition}

Importantly, we let $S_\al(k)$ denote the set of canonical representatives of all (white) $\al$-connected components of size at most $k$. We now assign a graph structure to images in order to encode the correct notion of connectivity in this context.

\begin{definition}[Graph of $I$]
\label{def:abig}
    Given an image $I$, let $I_\al$ denote the vertex $2$-colored graph with vertices in the domain of $I$ and edges between any two white vertices that are $d_\al$-adjacent and any two black vertices that are $d_{\beta}$-adjacent. 
\end{definition}

\begin{figure}[ht]
\centering
\includegraphics[width=\textwidth]{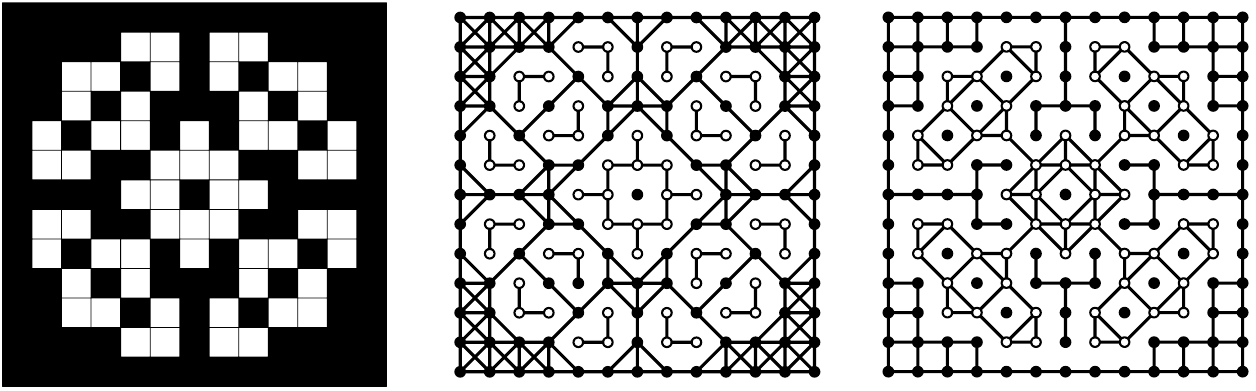}
\caption{Example of an image $I$ (left) and the corresponding graphs $I_4$ (middle) and $I_8$ (right).}
\label{fig:labeledimg}
\end{figure} 

Viewing binary images asymmetrically, as described in Definition \ref{def:abig}, enforces that white vertices appear in $\al$-connected components and black vertices appear in $\beta$-connected components. The topological utility of this asymmetric image representation is a classical result \cite{Rosenfeld}. The following theorem is the most important consequence of enforcing this notion of connectivity. 

\begin{theorem}[Digital Jordan curves \cite{Rosenfeld}]
\label{thm:jct}
    Let $S \subset \Z^2$ and $\overline{S} = \Z^2 \setminus S$. Then:
    \begin{enumerate}
        \item If $S$ is a $\beta$-connected simple cycle, then $\overline{S}$ consists of a finite and an infinite $\al$-connected component and each point in $S$ is $d_\al$-adjacent to both components.
        \item If $\overline{S}$ consists of a finite and an infinite $\al$-connected component and each point in $S$ is $d_\al$-adjacent to both components, then $S$ is a $\beta$-connected simple cycle.
    \end{enumerate}
    We call $(1)$ the digital Jordan curve theorem and $(2)$ the converse of the digital Jordan curve theorem.
\end{theorem}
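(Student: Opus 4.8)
The plan is to reduce both directions to the classical (continuous) Jordan curve theorem in $\R^2$, using the duality between $\al$- and $\beta$-connectivity to translate between discrete and continuous paths. Throughout, I identify each pixel with its center point in $\R^2$ and each region of $\overline{S}$ with the open unit squares it occupies.

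For part (1), suppose $S$ is a $\beta$-connected simple cycle $p_0, p_1, \dots, p_{n-1}, p_0$. First I would build a polygonal curve $\Gamma \subset \R^2$ by joining the center of $p_i$ to the center of $p_{i+1}$ by a straight segment; each segment has length $1$ or (only when $\beta = 8$) $\sqrt{2}$. The key technical point, and what I expect to be the main obstacle, is that when $\beta = 8$ two diagonal segments of the cycle may cross at a half-integer point, so $\Gamma$ need not be embedded. I would handle this by a local perturbation at each such crossing: near the point where the two diagonals of a unit cell meet, reroute one segment slightly so that $\Gamma$ becomes a simple closed polygon. Because the perturbation stays inside the union of the four pixels of that cell, and the cell center $(\tfrac12,\tfrac12)$ and the four integer corners are never centers of complement pixels, it changes neither which complement centers $\Gamma$ separates nor the $\al$-adjacencies in play; this is exactly where $\al = 4$ forbidding diagonal white connections is used. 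Having produced a genuine simple closed curve, the continuous Jordan curve theorem yields $\R^2 \setminus \Gamma = U_{\mathrm{in}} \sqcup U_{\mathrm{out}}$ with $U_{\mathrm{in}}$ bounded and $U_{\mathrm{out}}$ unbounded.

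The heart of part (1) is then to show that the partition of $\overline{S}$ into $\al$-connected components coincides with the partition of the complement centers by $U_{\mathrm{in}}$ and $U_{\mathrm{out}}$. For the easy direction, two $d_\al$-adjacent complement pixels have centers joined by a segment that misses $\Gamma$, so they lie in the same region. For the converse, I would show that any continuous arc in $\R^2 \setminus \Gamma$ between two complement centers can be digitized into a sequence of $d_\al$-steps through complement pixels; here the orientation-dependent choice of $\al$ versus $\beta$ is precisely what prevents a continuous path from sneaking through a corner contact of two $S$-pixels. This gives that $\overline{S}$ has exactly two $\al$-connected components, a finite one (centers in $U_{\mathrm{in}}$) and an infinite one (centers in $U_{\mathrm{out}}$). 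The two-sided adjacency claim then follows from the local picture at each $p_i$: since $p_i$ has exactly two cycle neighbors and $\Gamma$ passes simply through its center, both $U_{\mathrm{in}}$ and $U_{\mathrm{out}}$ touch $p_i$, producing a $d_\al$-adjacent complement pixel on each side.

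For the converse (part (2)), I would argue combinatorially. Assume $\overline{S}$ splits into one finite and one infinite $\al$-component with every point of $S$ being $d_\al$-adjacent to both. The first step is a local degree computation: examining the $3 \times 3$ neighborhood of each $p \in S$, I would show that two-sided $\al$-adjacency together with there being exactly two $\al$-components forces $p$ to have exactly two $\beta$-neighbors inside $S$ (fewer would fail to separate the two sides at $p$, more would pinch off an extra region). Hence the $\beta$-graph on $S$ is $2$-regular, so $S$ is a disjoint union of simple $\beta$-cycles with no chords. It then remains to rule out more than one cycle: if $S$ contained cycle components $C_1, C_2, \dots$, then applying the already-proved part (1) to $C_1$ and accounting for the removal of the remaining points of $S$ would refine $\overline{S}$ into at least three $\al$-components, contradicting the hypothesis. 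Therefore $S$ is a single $\beta$-connected simple cycle. I expect the two fiddly points to be the diagonal-crossing perturbation in part (1) and the exhaustive $3 \times 3$ local case analysis in part (2); the duality of $\al$ and $\beta$ is what makes both go through cleanly.
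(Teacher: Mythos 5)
First, a point of comparison: the paper gives no proof of Theorem \ref{thm:jct} at all---it is imported directly from Rosenfeld \cite{Rosenfeld}---so your attempt has to stand entirely on its own. Your part (1) follows a known and workable strategy (polygonalize the cycle, apply the continuous Jordan curve theorem, digitize paths back to the lattice), but two remarks are in order. The ``main obstacle'' you single out is vacuous: two diagonal edges of the cycle can cross only if all four corners of some unit cell lie in $S$, and then each such corner has at least three $\beta$-neighbours in $S$, contradicting the assumption that the $\beta$-adjacency graph of $S$ is a simple cycle (2-regular and chord-free). So no perturbation is ever needed. On the other hand, the step that carries all of the content---that every arc of $\R^2 \setminus \Gamma$ joining two complement centers can be digitized into a $d_\al$-path of complement pixels, even though the arc may wander through the closed unit squares of pixels of $S$---is merely asserted. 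That digitization is where the theorem is genuinely hard; as written, part (1) is an honest outline that could be completed, not a proof.

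The genuine gap is in part (2), in the claim that a $3\times 3$ inspection forces every $p \in S$ to have exactly two $\beta$-neighbours in $S$. Only half of this is local. Degree at most one is indeed impossible to reconcile with two-sidedness inside the window: with at most one cut point, the complement neighbours of $p$ form a single $d_\al$-connected arc, so $p$ sees only one component. But degree at least three cannot be excluded by any local computation. With three $\beta$-neighbours in $S$, the complement neighbours of $p$ fall into three local arcs, and nothing visible in the $3\times 3$ window prevents two of those arcs from being joined, far away from $p$, into one global component: for $\al = 4$, take $p = (0,0)$ with $S$-neighbours $(1,1), (-1,1), (0,-1)$, label the arc $\{(0,1)\}$ as the finite component and the arcs $\{(-1,0),(-1,-1)\}$ and $\{(1,-1),(1,0)\}$ as the infinite one; every constraint checkable in the window is satisfied. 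Your parenthetical justification ``more would pinch off an extra region'' is precisely the assertion that a theta-shaped set separates the plane into three regions---a global separation theorem of the same depth as part (1) itself, not a finite case analysis. Until that is supplied (for instance by applying part (1) to suitable subcycles of $S$ and proving a digital theta-curve lemma), the 2-regularity step, and hence all of part (2), does not go through. Two smaller repairs are also needed there: you must note that $S$ is automatically finite (every point of $S$ is $d_\al$-adjacent to the finite component, whose neighbourhood is finite), since otherwise a 2-regular graph could contain two-way-infinite paths rather than cycles; and in the final step, ruling out a second cycle $C_2$ requires showing both that $C_2$ cannot straddle the two sides of $C_1$ (this uses chord-freeness: straddling would force four corners of a cell into $S$ split between the two cycles, making them $\beta$-adjacent) and that the ``annulus'' between non-adjacent nested cycles contains at least one lattice point.
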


We consider any subgraph $H \subset I_\al$ to inherit its connectivity from $I_\al$, so we make no notational distinction between $H$ and its vertex set. Moreover, the size of the vertex set of $H$ is given by $|H|$. 

\begin{definition}
    For any $\al$-connected component $T$, the \textit{site perimeter} $P_\al(T)$ is the set of points (in $\Z^2 \setminus T$) that are $d_\al$-adjacent to $T$.
\end{definition}

If $T$ is a white $\al$-connected component in an image graph $I_\al$, maximality of $T$ implies that $P_\al(T) \subset I_\al$ contains only black vertices. It will be helpful to define the \textit{characteristic graph} of the site perimeter to better understand the properties of this set. To obtain this graph, construct the image whose black vertex set is $P_\al(T)$, and then endow this image with the connectivity described in Definition \ref{def:abig}. The following definition provides notation for this construction. See Figures \ref{fig:exampletile} and \ref{fig:exampletile2} for examples.

\begin{definition}[Characteristic graph of the site perimeter of $T$]
\label{def:chargraph}
    Given an $\al$-connected component $T$, define the binary image $\mathds{1}_{P_\al(T)}: \Z^2 \to \{\text{black},\text{white}\}$ as follows:
    \begin{equation*}
        \mathds{1}_{P_\al(T)}(z) := \begin{cases}
            \text{black}, & \text{if } ~z \in P_\al(T),\\
            \text{white}, & \text{otherwise.}
        \end{cases}
    \end{equation*}
    We then denote the characteristic graph of $P_\al(T)$ as $G_\al(T) := (\mathds{1}_{P_\al(T)})_\al$.
\end{definition}

\subsection{Connected component isoperimetry}

If $T$ is an $\al$-connected component, we want to have a complete understanding of the relationship between $|T|$ and $|P_\al(T)|$. This will consist of answering the following two questions for both $\al = 4$ and $\al = 8$:
\begin{enumerate}
    \item If $|T|$ is fixed, what is the minimum value of $|P_\al(T)|$? \label{q1i}
    \item If $|P_\al(T)|$ is fixed, what is the maximum value of $|T|$? \label{q2i}
\end{enumerate}

One of the earliest studies of connected component isoperimetry was by Harary and Harborth \cite{HH}. They used a square spiral to generate minimal \textit{edge perimeter} $4$-connected components of any size. We call these components $\mc{R}_k$ (see Fig. \ref{fig:QR}(b)), and we observe that they are also isoperimetrically optimal in a slightly different way. In particular, among all 8-connected components $T$ of size $k$, $|P_8(T)|$ is minimized by $T = \mc{R}_k$ (Lemma \ref{lem:minperim8}). 

In the case of $4$-connectivity, Wang and Wang constructed a sequence of minimal site perimeter connected components \cite{WW}. Later works answered the questions \eqref{q1i} and \eqref{q2i} stated above \cite{A06, Sieben}. In analogy with the square spiral, we observe there is a diamond spiral $\mc{Q}_k$ (see Fig. \ref{fig:QR}(a)) that generates minimal site perimeter 4-connected components of any size. Specifically, among all 4-connected components $T$ of size $k$, $|P_4(T)|$ is minimized by $T = \mc{Q}_k$ (Lemma \ref{lem:minperim}).

\begin{figure}[ht]
\centering
\includegraphics[width=\textwidth]{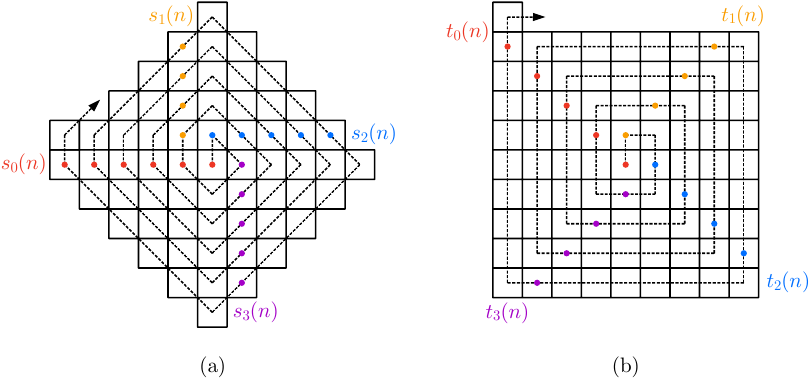}
\caption{$\mc{Q}_k$ is the union of the first $k$ integer points (or grid cells) along the dotted curve in (a). Similarly, $\mc{R}_k$ is the union of the first $k$ integer points (or grid cells) along the dotted curve in (b). The points marked $s_i(n)$ and $t_i(n)$ correspond with connected components that are optimal in the sense of question \eqref{q2i}.}
\label{fig:QR}
\end{figure} 

\begin{definition}[Sieben's component families \cite{Sieben}]
\label{def:ABD}
    For $x = (x_1, x_2) \in \R^2$ and integers $n,k$ such that $1 \leq k \leq n$, we define the following $4$-connected components via their vertex sets:
\begin{align*}
    \mc{A}_{k,n} &:= \mc{U}_{k,n} \cap \Z^2, ~~~\mc{U}_{k,n} := \{x~| -n < x_2-x_1<n ~\text{ and }-n<x_2+x_1<2k-n\}.\\
    \mc{B}_{k,n} &:= \mc{V}_{k,n} \cap \Z^2, ~~~\mc{V}_{k,n} := \{x~| -n < x_2-x_1<n+1 ~\text{ and }-n<x_2+x_1<2k-n+1\}.\\
    \mc{D}_{k,n} &:= \mc{W}_{k,n} \cap \Z^2, ~~~\mc{W}_{k,n} := \{x~| -n < x_2-x_1<n ~\text{ and }-n<x_2+x_1<2k-n+1\}.
\end{align*}
We impose the additional constraints $(k=1 \implies n = 1)$ for $\mc{A}_{k,n}$ and $2 \leq n$ for $\mc{D}_{k,n}$. 
\end{definition}
To keep our notation consistent, the components in Definition \ref{def:ABD} are a translation (and rotation, in the case of $\mc{D}_{k,n}$) of the equivalently named components given in \cite{Sieben}. These components give a complete answer to question \eqref{q2i}, which we record in the following lemma.

\begin{lemma}[Maximum size of 4-connected components \cite{A06, Sieben}]
\label{lem:maxarea}
The complete list (up to isometries) of maximum-size $4$-connected components with site perimeter $p \geq 8$ are provided by the following table, where $n := \lfloor \frac{p}{4} \rfloor$.

\setlength{\extrarowheight}{4.5pt}
\centering
\begin{tabular}{|c|c|c|}
\hline
$p \mod 4$ & Connected component(s) & Connected component size\\
\hline
$0$ & $\mc{A}_{n,n}$ & $s_0(n) := (n-1)^2+n^2$ \\
$1$ & $\mc{D}_{n,n}$ & $s_1(n) := n(2n-1)$  \\
$2$ & $\mc{A}_{n,n+1}$, $\mc{B}_{n,n}$ & $s_2(n) := 2n^2$ \\
$3$ & $\mc{D}_{n,n+1}$ & $s_3(n) := n(2n+1)$  \\
\hline
\end{tabular}
\end{lemma}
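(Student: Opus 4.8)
The statement is attributed to prior work (\cite{A06, Sieben}), so the cleanest route is to verify that Sieben's classification of maximum-area polyominoes with fixed site perimeter, translated into the present normalization, yields exactly the four cases in the table. My plan is therefore twofold: first establish the \emph{upper bound} $|T| \leq s_{p \bmod 4}(n)$ for every $4$-connected component $T$ with $|P_4(T)| = p$, and then verify that the listed components $\mc{A}_{n,n}, \mc{D}_{n,n}, \mc{A}_{n,n+1}, \mc{B}_{n,n}, \mc{D}_{n,n+1}$ actually \emph{achieve} these sizes and realize the prescribed site perimeter, so the bound is tight and the list is complete.

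For the upper bound, the key idea is a discrete isoperimetric argument in the $\ell_1$ geometry dual to $4$-connectivity. First I would relate the site perimeter $P_4(T)$ to the bounding diamond: since every point of $P_4(T)$ is $d_4$-adjacent to $T$, a component of large size must ``spread out'' along the two diagonal directions $x_2 - x_1$ and $x_2 + x_1$, and the site perimeter counts the lattice points just outside $T$ in these directions. The natural move is to pass to the rotated coordinates $(u, v) = (x_2 - x_1, x_2 + x_1)$, in which $4$-connected components become $\ell_\infty$-type (axis-parallel) objects and the site-perimeter count becomes a sum of ``width'' contributions along rows and columns of the rotated lattice. I would then argue that among all components with a fixed total perimeter budget $p$, the area is maximized by the most ``balanced'' diamond-shaped region, which forces the dimensions to be as equal as possible; this is where the parameter $n = \lfloor p/4 \rfloor$ enters, and the residue $p \bmod 4$ dictates how the leftover perimeter is distributed between the two diagonal widths, producing the four distinct optimal shapes.

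For the tightness and completeness half, I would directly compute $|P_4(\cdot)|$ and the cardinality for each family in Definition \ref{def:ABD}. Using the defining inequalities on $x_2 - x_1$ and $x_2 + x_1$, counting the interior lattice points of $\mc{U}_{k,n}, \mc{V}_{k,n}, \mc{W}_{k,n}$ reduces to counting lattice points in a rotated rectangle, and counting the site perimeter amounts to counting the lattice points in the one-step $\ell_1$-neighborhood just outside these inequalities. These are routine (if slightly tedious) lattice-point counts that I would verify case by case, confirming the four area formulas $s_0, s_1, s_2, s_3$ and that each family has site perimeter congruent to the claimed residue mod $4$. The claim that the list is complete ``up to isometries'' would follow from the rigidity of the balancing argument: whenever the leftover perimeter admits only one balanced allocation the optimizer is unique, and in the single case $p \equiv 2 \pmod 4$ there are exactly two inequivalent balanced allocations, yielding the two listed components $\mc{A}_{n,n+1}$ and $\mc{B}_{n,n}$.

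The main obstacle, I expect, is the completeness/uniqueness part of the upper bound rather than the bound itself. It is comparatively easy to show that \emph{some} balanced diamond is optimal; the delicate point is ruling out all other shapes achieving the same area with the same perimeter and showing the optimizers are \emph{exactly} those in the table up to isometry. Since this is precisely the content of Sieben's theorem, my pragmatic approach is to invoke \cite{Sieben} for the classification and restrict my own verification to checking that the present coordinate normalization of $\mc{A}, \mc{B}, \mc{D}$ matches Sieben's components and reproduces the stated $s_i(n)$ values and perimeter residues, so that no gap is introduced by the translation/rotation bookkeeping noted after Definition \ref{def:ABD}.
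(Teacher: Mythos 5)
The paper gives no independent proof of this lemma: it is recorded as a known result, citing \cite{A06, Sieben}, with the only original content being the remark (after Definition \ref{def:ABD}) that the components $\mc{A}_{k,n}$, $\mc{B}_{k,n}$, $\mc{D}_{k,n}$ are translations/rotations of Sieben's equivalently named components. Your pragmatic conclusion---invoke Sieben's classification and verify only the normalization bookkeeping and the $s_i(n)$ values---is exactly the paper's approach, so the proposal is correct and essentially identical to what the paper does.
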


\begin{remark}
\label{rem:Q=sieben}
    Up to isometries of $\Z^2$, $\mc{A}_{n,n} = \mc{Q}_{s_0(n)}$, $\mc{D}_{n,n} = \mc{Q}_{s_1(n)}$, $\mc{B}_{n,n} = \mc{Q}_{s_2(n)}$, $\mc{D}_{n,n+1} = \mc{Q}_{s_3(n)}$.
\end{remark}

\noindent We now answer question \eqref{q1i} for 4-connected components, which is effectively the inverse of question \eqref{q2i}.  

\begin{lemma}[Minimum site perimeter of 4-connected components \cite{A06, Sieben}]
\label{lem:minperim}
  For all $4$-connected components $T$ with $|T|=k$,
  \begin{equation*}
      |P_4(T)| \geq \sigma_4(k) := \Big\lceil 2\sqrt{2k-1} \Big\rceil+2.
  \end{equation*}
  Additionally, for every $k \in \N$ there exists at least one $4$-connected component of size $k$ that attains this minimum site perimeter size.
\end{lemma}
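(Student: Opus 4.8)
The plan is to prove the inequality by inverting the extremal relationship recorded in Lemma \ref{lem:maxarea}, and to prove tightness by exhibiting the diamond spiral $\mc{Q}_k$ as an explicit minimizer. The crucial observation is that the four cases of Lemma \ref{lem:maxarea} collapse into a single clean identity: writing $A(p)$ for the maximum size of a $4$-connected component with site perimeter exactly $p$, a short case check (substituting $p = 4n,\, 4n+1,\, 4n+2,\, 4n+3$ into $s_0, s_1, s_2, s_3$) shows that
\[
A(p) = \left\lfloor \frac{(p-2)^2 + 4}{8}\right\rfloor \qquad \text{for all } p \geq 8,
\]
and that $A$ is strictly increasing on this range. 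Each residue class also exhibits a component of the corresponding perimeter, so every integer $p \geq 8$ is realized and $A$ is everywhere defined.

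First I would establish the lower bound. Let $T$ be any $4$-connected component with $|T| = k$, and put $p := |P_4(T)|$. If $p \geq 8$, then $k = |T| \leq A(p) \leq \frac{(p-2)^2+4}{8}$, whence $8k - 4 \leq (p-2)^2$; since $p > 2$ this rearranges to $p \geq 2 + 2\sqrt{2k-1}$, and integrality of $p$ gives $p \geq \lceil 2\sqrt{2k-1}\rceil + 2 = \sigma_4(k)$, as desired. Because $\sigma_4$ is nondecreasing, one has $\sigma_4(k) \leq 7 < 8$ exactly when $k \leq 3$, so the only components escaping this argument have $|T| \in \{1,2,3\}$; these finitely many cases I would dispatch directly, checking that the single pixel, the domino, and the L-tromino realize $\sigma_4(1) = 4$, $\sigma_4(2) = 6$, $\sigma_4(3) = 7$ and that no smaller site perimeter is possible for those sizes.

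For the matching construction I would show that the diamond spiral satisfies $|P_4(\mc{Q}_k)| = \sigma_4(k)$ for every $k$; combined with the lower bound it then suffices to prove $|P_4(\mc{Q}_k)| \leq \sigma_4(k)$. At the ``milestone'' sizes $s_0(n), \dots, s_3(n)$ this is immediate: Remark \ref{rem:Q=sieben} identifies $\mc{Q}_{s_i(n)}$ with Sieben's extremal diamonds $\mc{A}_{n,n}, \mc{D}_{n,n}, \mc{B}_{n,n}, \mc{D}_{n,n+1}$ of Definition \ref{def:ABD}, whose site perimeters are read off directly (e.g.\ $|P_4(\mc{A}_{n,n})| = 4n = \sigma_4(s_0(n))$) and coincide with $\sigma_4$. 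For the intermediate sizes I would induct on $k$: passing from $\mc{Q}_{k-1}$ to $\mc{Q}_k$ adds one spiral cell $c$, and the perimeter changes by $\Delta_k = |N(c) \setminus (\mc{Q}_{k-1} \cup P_4(\mc{Q}_{k-1}))| - 1 \in \{-1,0,1,2\}$, where $N(c)$ is the set of four cells $d_4$-adjacent to $c$; the value of $\Delta_k$ is governed by how many of $c$'s neighbors already lie in $\mc{Q}_{k-1}$. The task is to verify that these geometric increments agree cell-by-cell with the arithmetic increments $\sigma_4(k) - \sigma_4(k-1)$ of the ceiling formula---in particular that the spiral contributes a $+1$ step precisely when it begins a new side of the diamond and a $+0$ step while filling that side.

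I expect this last bookkeeping to be the main obstacle: the function $\lceil 2\sqrt{2k-1}\rceil$ has an irregular step pattern (it even jumps by $2$ from $k=1$ to $k=2$, reflecting that no component has odd site perimeter below $7$), so matching it against the spiral's turning-and-filling structure requires tracking the position of $k$ relative to the milestone sizes $s_i(n)$ and handling the corner cells where $\Delta_k = 0$. An alternative that avoids the per-cell induction is to compute $|P_4(\mc{Q}_k)|$ in closed form directly from the explicit coordinates of the diamond spiral and verify it equals $\lceil 2\sqrt{2k-1}\rceil + 2$; I would fall back on this if the increment argument becomes unwieldy.
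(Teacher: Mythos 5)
First, a point of comparison: the paper never proves Lemma \ref{lem:minperim} at all --- it imports the statement from \cite{A06, Sieben}, and the only argument it supplies nearby is the proof of the Claim that $|P_4(\mc{Q}_k)| = \sigma_4(k)$, which proceeds by exactly the incremental bookkeeping you describe (anchoring at the milestone sizes $s_i(n)$ via Lemma \ref{lem:maxarea} and Remark \ref{rem:Q=sieben}, and checking that the perimeter increment is $1$ at milestones and $0$ otherwise). So the existence half of your plan coincides with what the paper actually does, and your idea of getting the lower bound by inverting Lemma \ref{lem:maxarea} --- via $A(p) = \lfloor ((p-2)^2+4)/8\rfloor$, which I verified against all four residue classes --- is a legitimate self-contained route that the paper leaves to the references.

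However, your lower bound argument has a circular step. The inversion only applies when $p := |P_4(T)| \geq 8$, because Lemma \ref{lem:maxarea} is stated only for $p \geq 8$. You then assert that ``the only components escaping this argument have $|T| \in \{1,2,3\}$,'' justified by the observation that $\sigma_4(k) \geq 8$ for $k \geq 4$. That is backwards: what must be ruled out is a component with $|T| \geq 4$ and $p \leq 7$, and the monotonicity of $\sigma_4$ says nothing about such a component unless you already know $p \geq \sigma_4(k)$ --- the very inequality being proved. Your direct check of sizes $1,2,3$ does not close this either, since it bounds perimeters for small sizes rather than sizes for small perimeters. The repair is short: each occupied row of $T$ contributes two distinct cells to $P_4(T)$ (left of its leftmost cell, right of its rightmost cell), and there is at least one further perimeter cell above the top row and one below the bottom row, so $p \geq 2\max(h,w)+2$ where $h$ and $w$ are the numbers of occupied rows and columns. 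If $|T| \geq 4$, then either $\max(h,w) \geq 3$, forcing $p \geq 8$, or $T$ is the $2 \times 2$ square, whose site perimeter is $8$. With this (or an equivalent finite extension of the table in Lemma \ref{lem:maxarea} to $p \in \{4,\dots,7\}$) inserted, your argument is complete.
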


\begin{claim}
    $|P_4(\mc{Q}_k)| = \sigma_4(k)$ for all $k \in \N$.
\end{claim}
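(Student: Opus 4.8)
The plan is to verify directly that the diamond spiral $\mc{Q}_k$ achieves the minimum site perimeter $\sigma_4(k) = \lceil 2\sqrt{2k-1}\rceil + 2$ guaranteed by Lemma \ref{lem:minperim}. Since that lemma already establishes the lower bound $|P_4(T)| \geq \sigma_4(k)$ for every $4$-connected component of size $k$, it suffices to show that $|P_4(\mc{Q}_k)| \leq \sigma_4(k)$, or equivalently that $\mc{Q}_k$ attains the minimum. The natural route is to connect $\mc{Q}_k$ to Sieben's optimal components $\mc{A}_{n,n}$, $\mc{D}_{n,n}$, $\mc{B}_{n,n}$, $\mc{D}_{n,n+1}$ from Lemma \ref{lem:maxarea}, since Remark \ref{rem:Q=sieben} already identifies $\mc{Q}_k$ with these components (up to isometry) precisely when $k$ equals one of the ``corner'' sizes $s_0(n), s_1(n), s_2(n), s_3(n)$.

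First I would handle these four distinguished values of $k$. For $k = s_i(n)$, Remark \ref{rem:Q=sieben} tells us $\mc{Q}_k$ is isometric to the corresponding maximal component with site perimeter $p$, where $p$ is the value whose floor-quarter is $n$; reading off the table in Lemma \ref{lem:maxarea}, this component has site perimeter exactly $p = 4n$, $4n+1$, $4n+2$, or $4n+3$ respectively. One then checks by a short computation that $\sigma_4(s_i(n)) = p$ in each case — i.e. that $\lceil 2\sqrt{2s_i(n)-1}\rceil + 2$ equals the claimed perimeter. For instance $2 s_0(n) - 1 = 2(n-1)^2 + 2n^2 - 1 = 4n^2 - 4n + 1 = (2n-1)^2$, so $\lceil 2\sqrt{2s_0(n)-1}\rceil + 2 = (4n-2)+2 = 4n = p$, and the other three residues reduce to similar perfect-square or near-square identities. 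This settles the claim on the corner sizes.

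Next I would treat intermediate sizes. The diamond spiral adds one grid cell at a time along the dotted curve in Figure \ref{fig:QR}(a), so $\mc{Q}_{k+1}$ is obtained from $\mc{Q}_k$ by appending a single pixel. The key structural fact is that as $k$ runs between consecutive corner sizes, the site perimeter $|P_4(\mc{Q}_k)|$ either stays constant or increases by a controlled amount, stepping through exactly the values $\sigma_4(k)$ prescribed by the formula. Concretely, I would argue that appending the next spiral cell along a diamond edge leaves the site perimeter unchanged except when the spiral turns a corner or completes a layer, at which point it increments; matching these increments against the jumps of $\lceil 2\sqrt{2k-1}\rceil$ as $k$ increases shows the two sequences agree on the whole range. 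Since $\sigma_4$ is nondecreasing and increases by small steps, and since we have pinned down the exact value at each corner, an induction on $k$ within each layer closes the argument: the lower bound of Lemma \ref{lem:minperim} forces $|P_4(\mc{Q}_k)| \geq \sigma_4(k)$, and the spiral's incremental growth forces equality.

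The main obstacle I anticipate is the bookkeeping for the intermediate sizes, rather than any deep idea. One must carefully track how adding a single cell to the diamond spiral changes the set of $d_4$-adjacent exterior cells — an appended cell both removes itself from the perimeter and potentially adds its new exposed neighbors — and verify these local changes accumulate to exactly match the discrete jumps of $\lceil 2\sqrt{2k-1}\rceil$. A clean way to sidestep casework would be to note that each $\mc{Q}_k$ sits nested between the maximal Sieben components as a subset realizing the intermediate area with minimal added boundary, so that $|P_4(\mc{Q}_k)|$ inherits optimality from the endpoints; but making ``inherits'' precise still requires the incremental analysis. Either way, the proof is a finite-case verification at the corners followed by a monotone interpolation argument, with the interpolation being the delicate part.
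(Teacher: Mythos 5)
Your proposal is correct and takes essentially the same route as the paper: the paper's proof is exactly a local incremental analysis showing $|P_4(\mc{Q}_{k+1})| - |P_4(\mc{Q}_k)|$ equals $1$ when $k = s_i(n)$ and $0$ otherwise, anchored by the identification of $\mc{Q}_{s_i(n)}$ with Sieben's optimal components via Remark \ref{rem:Q=sieben} and Lemma \ref{lem:maxarea}. Your framing (exact values at the corner sizes plus monotone interpolation between them) is just a rearrangement of the same two ingredients, and the level of detail you defer on the local casework matches what the paper itself defers.
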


\begin{proof}
    We use a local argument to characterize how the site perimeter changes from $\mc{Q}_k \to \mc{Q}_{k+1}$. Up to isometries of $\Z^2$, there are only four cases to consider, and we obtain
    \begin{equation}
    \label{eq:Qdiff}
        |P_4(\mc{Q}_{k+1})| - |P_4(\mc{Q}_k)|= \begin{cases}
            1, & \text{if }~ k = s_i(n)\\
            0, & \text{otherwise}
        \end{cases}
    \end{equation}
    The result follows by applying Lemma \ref{lem:maxarea} and Remark \ref{rem:Q=sieben}.
\end{proof}

\noindent In the $8$-connected component case, we have the following answer to question \eqref{q1i}.

\begin{lemma}[Minimum site perimeter of 8-connected components]
\label{lem:minperim8}
    For all $8$-connected components $T$ with $|T|=k$,
    \begin{equation*}
        |P_8(T)| \geq \sigma_8(k) :=  2\Big\lceil2\sqrt{k}\Big\rceil+4.
    \end{equation*}
    Additionally, for every $k \in \N$ there exists at least one $8$-connected component of size $k$ that attains this minimum site perimeter size.
\end{lemma}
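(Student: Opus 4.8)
The plan is to establish both the lower bound $|P_8(T)| \geq \sigma_8(k)$ and the existence of a size-$k$ component attaining it, and I would handle these two halves separately.

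For the lower bound, my approach is to transfer the problem to a perimeter quantity that behaves well under the $\ell_\infty$ geometry and then apply a continuous isoperimetric estimate. First I would associate to any $8$-connected component $T$ of size $k$ its bounding box $\bx(T)$, say of dimensions $a \times b$ with $ab \geq k$, and observe that the $\ell_\infty$ site perimeter $P_8(T)$ consists of all lattice points at $d_8$-distance exactly $1$ from $T$. The key geometric fact is that for $8$-connectivity, the site-perimeter points wrap around $T$ so that $|P_8(T)|$ is controlled by the "$\ell_\infty$ boundary length" of the region occupied by $T$; concretely, a dilation-by-one argument gives $|P_8(T)| \geq 2(a+b) + 4$ when $T$ fills an $a \times b$ box, since the enlarged box $(a+2)\times(b+2)$ exceeds $T$'s box by a rim of exactly $2(a+b)+4$ cells. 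Minimizing $2(a+b)+4$ subject to $ab \geq k$ over reals gives the square $a = b = \sqrt{k}$ and hence $2 \cdot 2\sqrt{k} + 4$; the subtlety is that $T$ need not fill its box, so I would instead argue directly that each of the "outward" lattice directions contributes disjointly to $P_8(T)$ and that the number of columns and rows meeting $T$ forces the bound $2\lceil 2\sqrt{k}\rceil + 4$. Rounding is where care is needed: I would show that the integer constraint lets one replace $\sqrt{k}$ by $\lceil \sqrt{k} \rceil$-type quantities in a way that collapses to the single ceiling $\lceil 2\sqrt{k}\rceil$, mirroring the structure of $\sigma_4$ in Lemma \ref{lem:minperim}.

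For attainment, the natural candidate is the square-spiral family $\mc{R}_k$ from Harary and Harborth, already introduced in the excerpt (Fig. \ref{fig:QR}(b)). I would prove $|P_8(\mc{R}_k)| = \sigma_8(k)$ by an incremental argument analogous to the claim proved for $\mc{Q}_k$: track how $|P_8(\mc{R}_k)|$ changes as $k \to k+1$ along the spiral. Up to symmetry there are only a few local configurations for appending the next cell, and in each the $\ell_\infty$ site perimeter either stays constant or increases by $2$ (the factor $2$ reflecting that $8$-site-perimeter grows in pairs when a new row or column is opened, consistent with the $2\lceil 2\sqrt{k}\rceil$ term). I would then check that the jumps of size $2$ occur exactly at the values of $k$ where $\lceil 2\sqrt{k}\rceil$ increments, so that summing the increments reproduces $\sigma_8(k)$ after verifying the base case (e.g. $k=1$ gives a single cell with $|P_8| = 8 = \sigma_8(1)$).

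The main obstacle I anticipate is the rounding/ceiling bookkeeping in the lower bound: proving that an arbitrary (possibly very non-convex, non-box-filling) $8$-connected component cannot beat the square spiral requires showing the isoperimetric extremizer is essentially square, and the clean continuous bound $2(a+b)+4 \geq 4\sqrt{k}+4$ must be upgraded to the exact integer form $2\lceil 2\sqrt{k}\rceil + 4$ without slack. I would manage this by citing or paralleling the known $4$-connected minimum site perimeter result (Lemma \ref{lem:minperim}) together with the duality between $4$- and $8$-connectivity: since $P_8$ counts the $\ell_\infty$-neighbors, one can relate the $8$-site perimeter of $T$ to a $4$-connected perimeter of a suitably dilated or rotated-by-$45^\circ$ region, letting the already-established $\lceil 2\sqrt{2k-1}\rceil$ machinery do the rounding work. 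Verifying that this transfer is exact (rather than off by a constant) is the delicate step, but once the extremal shape is pinned to $\mc{R}_k$, equality in the lower bound and attainment follow simultaneously.
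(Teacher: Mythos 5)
Your attainment argument is fine: the incremental analysis along the spiral $\mc{R}_k$ is exactly the ``simple local argument'' the paper alludes to, analogous to the claim it proves for $\mc{Q}_k$ (and the paper's primary route for attainment, a bijection between $R(\mc{R}_k)$ and $P_8(\mc{R}_k)$, is a cousin of it). The genuine gap is in the lower bound, at precisely the step you acknowledge but do not resolve: why an arbitrary $T$, which need not fill its bounding box and may be highly non-convex or have holes, still has site perimeter at least as large as the rim of its bounding box. Your proposed fix---that ``each of the outward lattice directions contributes disjointly'' via counting the rows and columns meeting $T$---fails as stated. The cells above/below the column extremes and left/right of the row extremes of $T$ can coincide (for $T=\{(0,1),(1,0)\}$ the cell $(0,0)$ is simultaneously ``below column $0$'' and ``left of row $0$''), so the count $2a+2b$ double-counts; and this count cannot produce the $+4$ corner term at all, since the four diagonal extremes are invisible to row/column arguments. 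The paper closes this gap with a dedicated projection lemma (Lemma \ref{lem:boxbound}): every cell of the bounding rectangle $R(T)$ is projected $45^\circ$ inward until it lands in $P_8^{ext}(T)$, and injectivity of this projection is proved using the fact that $P_8^{ext}(T)$ is a simple cycle (Lemma \ref{lem:simple}) together with the digital Jordan curve theorem (Theorem \ref{thm:jct}). That is the key idea missing from your proposal. Once one has $|P_8(T)|\geq |R(T)| = 2(a+b)+4$ with $ab\geq k$, the ceiling bookkeeping you identify as the main obstacle is actually the easy part: $a+b\geq 2\sqrt{ab}\geq 2\sqrt{k}$ and $a+b\in\Z$ give $a+b\geq\lceil 2\sqrt{k}\rceil$ immediately (this is Claim \ref{claim1}).

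Your fallback plan---transferring Lemma \ref{lem:minperim} through a $45^\circ$ rotation/duality---also cannot be made exact, as you suspected. The map $(x,y)\mapsto(x+y,x-y)$ turns $d_8$ into twice $d_4$ on the even sublattice, but that sublattice with this adjacency is again isomorphic to $(\Z^2,d_8)$, so the correspondence is circular rather than a reduction to the $4$-connected case; and no area correspondence reconciles the two formulas: for example $\sigma_8(3)=2\lceil 2\sqrt{3}\rceil+4=12$, while the natural candidates $\sigma_4(6)+2=11$ and $\sigma_4(5)+2=10$ both fall short, reflecting that $\sigma_8$ jumps by $2$ while $\sigma_4$ jumps by $1$ at incompatible locations. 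So the rounding has to be done on the $8$-connected side directly---which, given the projection lemma, it can be.
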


\begin{proof}
To prove Lemma \ref{lem:minperim8}, let $R(T)$ be the bounding rectangle of an 8-connected component $T$, i.e. the smallest 4-connected rectangle that surrounds $T$. Such a rectangle has four corner cells, hence $|R(T)|$ is always four more than the edge perimeter of the rectangular region it encloses. Consequently, Claim \ref{claim1} implies $|R(T)| \geq 2\big\lceil 2 \sqrt{|T|}\big\rceil + 4$, and Lemma \ref{lem:boxbound} finishes the proof. The fact that the minimum site perimeter size is always attained follows from observing that the projection map described in Lemma \ref{lem:boxbound} is a bijection from $R(\mc{R}_k)$ to $P_8(\mc{R}_k)$. This fact can also be deduced from a simple local argument.

If $T$ has a fixed site perimeter size of at most $2n + 4$, Claim \ref{claim1} implies the size of $T$ is uniquely maximized when $T$ is a rectangular $8$-connected component with dimensions $\lceil \frac{n}{2} \rceil \times \lfloor \frac{n}{2} \rfloor$. By Lemma \ref{lem:boxbound}, the maximum size of $T$ is strictly smaller if its site perimeter is $2n + 5$. We explicitly obtain such a component by deleting a cell from the interior of the maximum-size rectangular component with site perimeter $2n + 4 \geq 14$. We record these results in slightly more detail in Lemma \ref{lem:maxarea8}.
\end{proof}

\begin{lemma}[Maximum size of 8-connected components]
\label{lem:maxarea8}
The list (up to isometries) of maximum-size $8$-connected components with site perimeter $p \geq 14$ are provided by the following table, where $n := \lfloor \frac{p}{8} \rfloor$. We use $*$ to represent an unspecified component set that grows as a function of $n$.

\setlength{\extrarowheight}{4.5pt}
\centering
\begin{tabular}{|c|c|c|}
\hline
$p \mod 8$ & Connected component(s) & Connected component size\\
\hline
$0$ & $\mc{R}_{t_0(n)}$ & $t_0(n) := (2n-1)^2$ \\
$1$ & $*$ & $t_0(n) -1$  \\
$2$ & $\mc{R}_{t_1(n)}$ & $t_1(n) := 2n(2n-1)$ \\
$3$ & $*$ & $t_1(n)-1$  \\
$4$ & $\mc{R}_{t_2(n)}$ & $t_2(n) := (2n)^2$ \\
$5$ & $*$ & $t_2(n) - 1$  \\
$6$ & $\mc{R}_{t_3(n)}$ & $t_3(n) := 2n(2n+1)$ \\
$7$ & $*$ & $t_3(n)-1$ \\
\hline
\end{tabular}
\end{lemma}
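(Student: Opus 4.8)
The plan is to invert the bounding-box argument already used to prove Lemma \ref{lem:minperim8}. Write $p = |P_8(T)|$ and let $a \times b$ be the bounding box of $T$, so that $|R(T)| = 2(a+b) + 4$ and, by Lemma \ref{lem:boxbound}, $p \geq |R(T)| = 2(a+b)+4$. Setting $s := a+b$, this gives $s \leq \lfloor (p-4)/2 \rfloor =: N$. Since $|T| \leq ab$ and, for fixed $s$, the product $ab$ is maximized over integer factorizations by the balanced choice, I obtain the uniform bound $|T| \leq \lfloor s/2 \rfloor \lceil s/2 \rceil \leq \lfloor N/2\rfloor\lceil N/2\rceil$, where the last inequality uses that $s \mapsto \lfloor s/2\rfloor\lceil s/2\rceil$ is nondecreasing (a consequence of Claim \ref{claim1}). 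I would first record that $p$ even means $p = 2N+4$ while $p$ odd means $p = 2N+5$, so the whole problem reduces to analyzing these two parities and then translating $N$ back through $n = \lfloor p/8\rfloor$ to land the four residue rows.

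For even $p = 2N+4$ I would argue the bound $|T| \leq \lfloor N/2\rfloor\lceil N/2\rceil$ is attained \emph{only} by the balanced $\lceil N/2\rceil \times \lfloor N/2\rfloor$ rectangle: equality forces $s = N$ (a strictly smaller bounding box strictly decreases $\lfloor s/2\rfloor\lceil s/2\rceil$), forces balanced dimensions, and forces $|T| = ab$, i.e. a full box. This rectangle is exactly the square-spiral component $\mc{R}_{t_i(n)}$ reaching a rectangular shape (cf. Fig.~\ref{fig:QR}(b)), and a direct check gives its site perimeter as $2N+4 = p$, so it is admissible and the maximizer is unique up to isometry. For odd $p = 2N+5$ the key point is a parity obstruction: any full-box component is a rectangle and hence has \emph{even} site perimeter $2s+4$, which can never equal the odd value $p$. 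Thus a component of site perimeter $p$ cannot fill its bounding box, so $|T| \leq ab - 1 \leq \lfloor N/2\rfloor\lceil N/2\rceil - 1$, which yields the odd-row sizes $t_i(n) - 1$.

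To see these bounds are achieved in the odd case, I would delete a single interior cell from the balanced rectangle of half-perimeter $N$. Such a cell has all eight neighbors in $T$, so its removal leaves the bounding box unchanged, keeps the component $8$-connected, adds the deleted cell to the site perimeter, and removes no existing perimeter cell; hence the site perimeter increases by exactly one to $2N+5 = p$ while the size drops to $\lfloor N/2\rfloor\lceil N/2\rceil - 1$. This requires both dimensions to be at least $3$, which holds for every odd $p \geq 17$; the single remaining odd case $p = 15$ (the $3\times 2$ rectangle, which has no interior cell) I would handle by hand, deleting a non-corner cell of the length-$3$ side and checking directly that the perimeter rises by exactly one. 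Finally I would translate $N$ into $n = \lfloor p/8\rfloor$ across the four residues mod $8$ to confirm the column entries $t_i(n)$ and $t_i(n)-1$, and note that the many inequivalent single-cell deletions account for the unspecified families marked $*$.

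The step I expect to demand the most care is the odd case: making the parity obstruction airtight — in particular ruling out that some non-rectangular shape with a \emph{smaller} bounding box but extra interior perimeter cells could recover the full rectangular area (this is handled by the monotonicity of $\lfloor s/2\rfloor\lceil s/2\rceil$ together with the strict drop when $s<N$) — and verifying that the deletion raises the perimeter by \emph{exactly} one, including the degenerate $p=15$ instance where no interior cell exists. The even case and the residue bookkeeping are then routine given Claim \ref{claim1} and Lemma \ref{lem:boxbound}.
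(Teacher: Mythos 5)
Your proposal is correct and follows essentially the same route as the paper, which proves this lemma (inside the proof of Lemma \ref{lem:minperim8}) by combining the bounding-box bound of Lemma \ref{lem:boxbound} with the balanced-rectangle optimization of Claim \ref{claim1}, using the same parity/uniqueness argument to get the strict drop for odd $p$ and the same single-cell deletion to show attainment. If anything, you are more careful than the paper: you notice that in the smallest odd case $p = 15$ the balanced rectangle is $3 \times 2$ and has no interior cell, so the deletion must be of a non-corner edge cell and checked by hand---a degenerate case the paper's phrase ``deleting a cell from the interior'' glosses over.
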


\subsection{Geometric results}

We now state two important theorems.

\begin{theorem}[Pick's theorem \cite{Rosenholtz}]
\label{thm:Picks}
    Suppose $P$ is the interior of a simple polygon whose vertices are integer points (i.e. elements of $\Z^2$). Then
    \begin{equation*}
        \text{area}(P) = i+\frac{b}{2}-1,
    \end{equation*}
    where $i$ is the number of integer points that are strictly inside $P$ and $b$ is the number of integer points on the boundary of $P$. 
\end{theorem}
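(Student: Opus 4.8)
The plan is to prove Pick's formula by showing that both $\mathrm{area}(P)$ and the combinatorial quantity $\mathrm{Pick}(P) := i + \frac{b}{2} - 1$ are additive under subdivision, then reducing an arbitrary simple lattice polygon to lattice triangles, and finally to \emph{primitive} (empty) triangles, for which the identity can be checked directly.

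First I would establish the key additivity lemma: if a simple lattice polygon $P$ is cut by a lattice segment joining two of its boundary lattice points into two simple lattice polygons $P_1$ and $P_2$, then $\mathrm{Pick}(P) = \mathrm{Pick}(P_1) + \mathrm{Pick}(P_2)$. Since area is trivially additive, it suffices to track how lattice points are reassigned. If the shared segment carries $k$ lattice points strictly between its endpoints, then those $k$ points are boundary points of both $P_1$ and $P_2$ but become interior points of $P$, while the two endpoints remain on the boundary of $P$. A short count gives $i = i_1 + i_2 + k$ and $b = b_1 + b_2 - 2k - 2$, and substituting into $\mathrm{Pick}$ makes the shared-edge contributions cancel, yielding the claim.

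Next I would triangulate $P$. Every simple polygon admits a triangulation using only diagonals between existing vertices, so all resulting triangles are lattice triangles; by iterated application of the additivity lemma it suffices to prove Pick's formula for a single lattice triangle. I would then refine further: any lattice triangle containing a lattice point other than its three vertices---either in its interior or on an edge---can be subdivided (connect an interior point to the three vertices, or split along an edge point) into lattice triangles each enclosing strictly fewer lattice points. Inducting on the number of enclosed lattice points reduces the problem to primitive triangles, those whose only lattice points are the three vertices.

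Finally I would handle the base case. For a primitive triangle $T$ one has $i = 0$ and $b = 3$, so $\mathrm{Pick}(T) = \tfrac{3}{2} - 1 = \tfrac{1}{2}$, and it remains to show $\mathrm{area}(T) = \tfrac{1}{2}$. Writing the edge vectors as $u = v_1 - v_0$ and $w = v_2 - v_0$, I would argue that primitivity forces $u$ and $w$ to form a basis of $\Z^2$: the half-open parallelogram they span contains exactly one lattice point, so the sublattice they generate has index one, i.e. $|\det(u,w)| = 1$; hence $\mathrm{area}(T) = \tfrac{1}{2}|\det(u,w)| = \tfrac{1}{2}$. I expect this base case to be the main obstacle, since it is where the integrality of the lattice genuinely enters: the additivity and triangulation steps are essentially bookkeeping, whereas pinning down the area of an empty triangle requires the determinant/lattice-index argument (equivalently, checking that two copies of $T$ tile a fundamental parallelogram of $\Z^2$). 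An alternative for this step that avoids determinants is to enclose $T$ in an axis-aligned bounding rectangle and peel off axis-parallel right triangles, for which Pick's formula follows from the easily verified rectangle case by additivity; I would keep this in reserve in case a fully elementary base case is preferred.
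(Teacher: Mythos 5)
The paper never proves this statement: Pick's theorem is imported as a classical result, cited to Rosenholtz, and used as a black box (in the area bound \eqref{eq:areaP} inside Theorem~\ref{thm:TWB} and again in Theorem~\ref{thm:36}). So there is no proof of record to compare against; what can be assessed is your argument on its own terms, and it is correct. It is the standard subdivision proof: your additivity count is right ($i = i_1 + i_2 + k$, $b = b_1 + b_2 - 2k - 2$, which makes the shared-edge contributions cancel in $i+\frac{b}{2}-1$); triangulating a simple polygon by diagonals is legitimate but rests on the two-ears theorem, which is the one nontrivial planar-topology fact you invoke and should be cited rather than treated as obvious; the refinement to primitive triangles by induction on the number of enclosed lattice points is sound; and the base case via the lattice-index argument is correct. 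Two small points deserve to be made explicit in a full write-up. First, the claim that the half-open parallelogram spanned by the edge vectors of a primitive triangle contains exactly one lattice point needs the observation that the central symmetry $z \mapsto v_0 + u + w - z$ preserves $\Z^2$ and exchanges the two closed triangles forming the parallelogram, so an extra lattice point in the parallelogram would produce an extra lattice point in $T$ itself. Second, your reserve strategy (bounding rectangle plus axis-parallel right triangles) needs subtractivity of the Pick functional, not just additivity; that does follow from the same counting identity, but it is a separate statement. Neither issue is a gap in the sense of a wrong step---your outline, completed with these details, is a full proof of the theorem the paper only cites.
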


\begin{theorem}[Translational monotiling \cite{GBN}]
\label{thm:exacttile}
    Let $R$ be a subset of the plane that is homeomorphic to the closed disc and whose boundary is piecewise $C^2$ with a finite number of inflection points. Then $R$ tiles the plane by translation if and only if $R$ can be completely surrounded by translated copies of itself.
\end{theorem}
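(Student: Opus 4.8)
The forward implication is immediate: if $R$ tiles $\R^2$ by translation, then the tiles surrounding any fixed copy of $R$ are themselves translates of $R$ and completely enclose it. The content of the theorem is the reverse implication, so the plan is to show that a single admissible surround already forces a periodic (indeed lattice) tiling.

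First I would record that since $R$ is homeomorphic to the closed disc, its boundary $\partial R$ is a Jordan curve, and the piecewise-$C^2$ hypothesis with finitely many inflection points lets me cut $\partial R$ into finitely many maximal arcs of constant curvature sign. A surround is a finite family of translates $R + v_1, \dots, R + v_t$, each $v_i \neq 0$, whose union covers an annular neighborhood of $\partial R$ while their interiors are disjoint from $\Int R$. The first key step is to upgrade each topological contact between $R$ and a neighbor $R + v_i$ into an \emph{arc} of positive length: two translated copies of a smooth boundary that meet can only be tangent along a set that, away from the finitely many inflection points, is itself an arc, since an outward-convex arc and its translate cannot cross transversally. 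This produces a finite cyclic sequence of boundary edges $E_1, \dots, E_s$ partitioning $\partial R$, where along $E_j$ the region $R$ abuts exactly one neighbor $R + v_{i(j)}$.

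The heart of the argument is to show this edge partition is a \emph{Beauquier--Nivat factorization}. For each edge $E_j$ shared with $R + v_{i(j)}$, the translated arc $E_j - v_{i(j)}$ is again a subarc of $\partial R$ (it is the locus where $R$ touches $R - v_{i(j)}$), so edges pair off into translation-congruent opposite pairs traversed in opposite orientations. A turning-number and Euler-characteristic bookkeeping for the corona of a single topological disc then forces the pairing to close up into exactly a pseudohexagon $\partial R = A\,B\,C\,\overline{A}\,\overline{B}\,\overline{C}$ (or its degenerate pseudosquare form with one empty arc), where $\overline{X}$ denotes $X$ reverse-traversed and translated by the corresponding vector. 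Finally, the Beauquier--Nivat criterion states precisely that such a boundary factorization is equivalent to $R$ tiling the plane by translation, with the tiling lattice generated by the factorization vectors among $v_1, \dots, v_t$; this yields the tiling and completes the reverse implication.

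The step I expect to be the main obstacle is the combinatorial bookkeeping that pins the edge pairing down to exactly six (or four) sides. Ruling out degenerate point-contacts and ``thin'' neighbors that meet $\partial R$ only along zero-length sets is exactly where the smoothness and finite-inflection hypotheses must be used carefully; without them the contact set of two translates could be a Cantor-like set and the edge decomposition need not be finite. Making the tangency-implies-arc claim rigorous, and verifying that the oriented turning numbers of the paired arcs sum correctly so that the translation vectors close into a rank-$2$ lattice rather than a degenerate one-dimensional strip pattern, is the crux on which the whole reverse direction rests.
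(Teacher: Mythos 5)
This statement is never proved in the paper: it is imported verbatim from Girault-Beauquier and Nivat \cite{GBN} and used purely as a black box (in Lemmas \ref{lem:P8tiling} and \ref{lem:tiling4}), so there is no in-paper argument to compare against. Your proposal therefore has to stand on its own, and it does not yet.

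Two concrete problems. First, your ``first key step''---that every contact between $R$ and a neighbor can be upgraded to an arc of positive length---is false. The simplest example already kills it: in the standard surround of a unit square by eight translates, the four diagonal neighbors each meet the central square in exactly one point. Point contacts are not a degeneracy that smoothness rules out; they are unavoidable (they become the vertices of the eventual tiling) and must be carried through the combinatorics, not excluded. Relatedly, ``an outward-convex arc and its translate cannot cross transversally'' is also false---a circle and a small translate of it cross transversally at two points; what prevents crossing is disjointness of interiors, not convexity, and disjoint interiors still permit single-point tangencies. Second, and more fundamentally, the heart of your argument---that the edge pairing produced by a single surround must close up into a pseudohexagon or pseudosquare factorization, your ``turning-number and Euler-characteristic bookkeeping''---is asserted rather than proved, as you yourself acknowledge. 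That step \emph{is} the theorem: once a Beauquier--Nivat factorization is in hand, tiling follows from the factorization criterion, which is a result of comparable depth due to the same authors. So what you have is a reduction of one major theorem of \cite{GBN} to another, with the reduction's crucial step left open, plus a supporting claim that is false as stated. The gap sits exactly where you predicted the difficulty would be, but predicting it is not the same as closing it.
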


\section{Tiling with Boundaries and the Maximum Guaranteed Component Size}
\label{sec:problemdefs}

In this section, we formally frame the main questions \eqref{q1} and \eqref{q2} that we posed in the introduction. We also show how these two questions are directly related to each other, which will allow us to answer both questions simultaneously. 

\begin{definition}[Tiling with Boundaries (TWB)]
\label{def:TWB}
    If $S$ is a set of (canonically represented) $\al$-connected components and $\mathcal{I}_{n \times m}$ is the set of all $n \times m$ binary images,
    \begin{equation*}
        \Phi_\al(S,n,m) := \max \{\rho(I)  :  I \in \mathcal{I}_{n \times m}, \CC_\al(I) \subseteq S \}.
    \end{equation*}
    Similarly, if $\mc{I}$ is the set of all infinite binary images,
    \begin{equation*}
        \Phi_\al(S) = \sup \{\rho(I) : I \in \mathcal{I}, \CC_\al(I) \subseteq S \}.
    \end{equation*}
     Determining the value of $\Phi_\al(S,n,m)$ or $\Phi_\al(S)$ is termed the $S$-TWB problem. 
     
     \noindent Here, we focus almost exclusively on the $S_\al(k)$-TWB problem for infinite images, so its solution is denoted $\Phi_\al(k)$ for simplicity.
\end{definition}

The name \textit{tiling with boundaries} in Definition \ref{def:TWB} is due to the fact that computing $\Phi_\al(S)$ requires constructing the densest tiling of $\Z^2$ by translating elements $T \in S$ under the constraint that every translated copy of $T$ is surrounded by a black boundary, $P_\al(T)$, in the tiling. This constraint is both necessary and sufficient to ensure that no translated elements of $S$ can merge to create a component that is not a translated element of $S$. For simplicity, we will often omit the distinction between a component $T \in S$ and the elements of its equivalence class under translation. 

We say that an $n \times m$ image $I$ is a witness to $\Phi_\al(S,n,m)$ if $\CC_\al(I) \subseteq S$ and $\rho(I) = \Phi_\al(S,n,m)$. Similarly, a witness to $\Phi_\al(S)$ is either an infinite image $I$ such that $\CC_\al(I) \subseteq S$ and $\rho(I) = \Phi_\al(S)$ or, if the supremum is not attained, the witness is a sequence of infinite images $(I_k)_{k=1}^{\infty}$ with $\CC_\al(I_k) \subseteq S$ for all $k$ and $\lim_{k \to \infty} \rho(I_k) = \Phi_\al(S)$. The supremum is attained in all of the TWB instances we solve in this paper, so all of our witnesses will be single images.

The white pixel density of an infinite binary image that only has white $\al$-connected components in $S$ is always a lower bound on $\Phi_\al(S)$. In this way, we can think of lower bounds on $\Phi_\al(S)$ as proofs of the existence of infinite images that attain a certain density, while upper bounds are a proof of the nonexistence of infinite images that attain a certain density.

Recall that we called the answer to question \eqref{q2} the maximum guaranteed component size (MGCS). We now provide a working definition. 

\begin{definition}[Maximum Guaranteed Component Size (MGCS)]
\label{def:MGCS}
If $\mathcal{I}_{n\times m}$ is the set of all $n \times m$ binary images, then
\begin{equation*}
    \Ca(d, n, m) := \min \{\maxCC_\al(I) : I \in \mathcal{I}_{n \times m}, \rho(I) \geq d\}.
\end{equation*}
Similarly, let $\mc{I}$ be the (uncountable) set of all infinite binary images. Then we set
\begin{equation*}
    \Ca(d) := \min\{\maxCC_\al(I) : I \in \mathcal{I}, \rho(I) \geq d\}.
\end{equation*}
Determining the value of $\Ca(d,n,m)$ or $\Ca(d)$ is termed the MGCS problem.
\end{definition}

Note that the codomain of $\maxCC_\al$ is $\N\cup\{+\infty\}$, hence $\Ca(d)$ is a minimum over a set of positive integers (and $+\infty$), which is well-defined. The definitions of $\Ca(d, n, m)$ and $\Ca(d)$ imply that there always exists at least one image $I$ with density $\rho(I) \geq d$ whose maximal white $\al$-connected component(s) have the same size as the MGCS. One other important property that follows immediately from Definition \ref{def:MGCS} is that $\Ca(d, n, m)$ and $\Ca(d)$ are nondecreasing functions of $d$. 

The TWB problem is more general than the MGCS problem because it aims to maximize the density of images that contain arbitrary component sets. However, this difference purely reflects a notational choice. In this paper, we mainly focus on calculating $\Phi_\al(k)$, the solution to the $S_\al(k)$-TWB problem. This component set is special for the following reason. If we know $\Phi_\al(k) = d$, then any tiling of the plane that achieves density $d' > d$ must contain at least one white $\al$-connected component of size at least $k+1$, hence $\Ca(d') \geq k+1$. We formalize this observation in the following lemma.

\begin{lemma}
\label{lem:inverses}
    $\Ca(d)$ and $\Phi_\al(k)$ are inverses:
    \begin{enumerate}
        \item $\displaystyle \Ca(d) \geq \min \{k \in \N:~\Phi_\al(k) \geq d\}$,
        \item $\displaystyle \Phi_\al(k) = \sup \{d \in (0,1]:~ \Ca(d) \leq k\}$.
    \end{enumerate}
    \noindent Moreover, equality is attained in $(1)$ if there exists an image that is a witness to $\Phi_\al(k)$.
\end{lemma}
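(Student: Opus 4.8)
The plan is to establish the two inequalities as a duality between the nondecreasing staircase functions $\Ca(d)$ and $\Phi_\al(k)$, treating them essentially as generalized inverses. I would first prove $(1)$ directly from the definitions: fix $d$ and let $k^* = \min\{k \in \N : \Phi_\al(k) \geq d\}$. The goal is to show any image $I$ with $\rho(I) \geq d$ must contain a white $\al$-connected component of size at least $k^*$, i.e. $\maxCC_\al(I) \geq k^*$. I would argue by contradiction: if $\maxCC_\al(I) \leq k^* - 1$, then every white component of $I$ lies in $S_\al(k^*-1)$, so $\CC_\al(I) \subseteq S_\al(k^*-1)$ and hence $\rho(I) \leq \Phi_\al(k^*-1)$ by the definition of $\Phi_\al$ as a supremum. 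But by minimality of $k^*$ we have $\Phi_\al(k^*-1) < d$, contradicting $\rho(I) \geq d$. Since this holds for every admissible $I$, it holds for the minimizer defining $\Ca(d)$, giving $\Ca(d) \geq k^*$. (The edge case $k^* = 1$ is immediate since $\maxCC_\al \geq 1$ whenever any white pixel exists; one should note $d > 0$ forces a white pixel.)

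For $(2)$, I would show both inequalities for $\sup\{d \in (0,1] : \Ca(d) \leq k\}$. Write $D_k = \{d : \Ca(d) \leq k\}$. To get $\Phi_\al(k) \leq \sup D_k$, I would take any image $I$ witnessing a density close to $\Phi_\al(k)$ with $\CC_\al(I) \subseteq S_\al(k)$, so $\maxCC_\al(I) \leq k$; setting $d = \rho(I)$, the image $I$ itself certifies $\Ca(d) \leq k$, so $d \in D_k$ and thus $\rho(I) \leq \sup D_k$. Taking the supremum over such $I$ yields $\Phi_\al(k) \leq \sup D_k$. For the reverse, I would take any $d \in D_k$, so there is an image $I$ with $\rho(I) \geq d$ and $\maxCC_\al(I) \leq k$; then $\CC_\al(I) \subseteq S_\al(k)$, so $d \leq \rho(I) \leq \Phi_\al(k)$ by definition of $\Phi_\al$. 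Taking the supremum over $d \in D_k$ gives $\sup D_k \leq \Phi_\al(k)$, completing the equality.

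Finally, for the ``moreover'' clause, I would show equality in $(1)$ holds when $\Phi_\al(k^*)$ is attained by some witness image. The point is that the inequality in $(1)$ can only be strict if there is a gap: $\Ca(d) > k^*$ means \emph{every} image of density $\geq d$ has a component strictly larger than $k^*$. But if $J$ is a witness to $\Phi_\al(k^*) \geq d$, then $J$ has $\CC_\al(J) \subseteq S_\al(k^*)$, hence $\maxCC_\al(J) \leq k^*$, while $\rho(J) = \Phi_\al(k^*) \geq d$; this $J$ is admissible in the minimization defining $\Ca(d)$ and achieves $\maxCC_\al(J) \leq k^*$, forcing $\Ca(d) \leq k^*$ and hence equality.

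I expect the main subtlety to be the infinite-image case rather than any deep difficulty: because $\Phi_\al(S)$ is defined as a supremum that need not be attained, I must be careful to phrase the contrapositive in $(1)$ using the supremum bound $\rho(I) \leq \Phi_\al(k^*-1)$ rather than assuming an extremal image exists, and to handle the ``moreover'' clause only under the stated attainment hypothesis. A secondary point worth checking is the monotonicity bookkeeping—that $\Phi_\al$ is nondecreasing in $k$ (immediate since $S_\al(k) \subseteq S_\al(k+1)$) and $\Ca$ is nondecreasing in $d$ (already noted after Definition \ref{def:MGCS})—so that the minima and suprema defining the inverses are well-behaved and the staircase duality is internally consistent.
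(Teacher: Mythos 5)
Your proposal is correct and follows essentially the same route as the paper: part (1) by the contrapositive/minimality argument (any image with all components in $S_\al(k^*-1)$ has density at most $\Phi_\al(k^*-1) < d$), part (2) by a two-sided supremum argument exploiting that the minimum defining $\Ca(d)$ is attained, and the ``moreover'' clause by feeding the witness image directly into the definition of $\Ca(d)$. Your explicit handling of the $k^*=1$ edge case (where $\Phi_\al(0)$ is undefined) is in fact slightly more careful than the paper's write-up.
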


\begin{proof}
Let $k$ be the minimum value where $\Phi_\al(k) \geq d$, so $d^* := \Phi_\al(k-1) < d$. By definition, any infinite image that achieves density greater than $d^*$ must then contain an $\al$-connected component of size at least $k$, so $\Ca(d) \geq k$. Note that if there exists a single image that is a witness to $\Phi_\al(k)$, then the image must have density at least $d$, so $\Ca(d) = k$. 

On the other hand, let $d$ be the unique value that satisfies $\Ca(d- \eps) \leq k$ and $\Ca(d+ \eps) > k$ for all $\eps > 0$. Then there cannot exist an infinite image with density $d+\eps$ that contains only white $\al$-connected components in $S_\al(k)$, so $\Phi_\al(k) \leq d$. However, the condition $\Ca(d- \eps) \leq k$ for all $\eps > 0$ implies that there must exist a sequence of infinite images, each containing only white $\al$-connected components in $S_\al(k)$, whose density approaches $d$ from below. Hence $\Phi_\al(k) \geq d$.
\end{proof}

Lemma \ref{lem:inverses} shows that solving the $S_\al(k)$-TWB problem gives a lower bound on the solution to the MGCS problem for infinite images. This is a valuable result because the behavior of $\Phi_\al(k)$ is easier to directly analyze than $\Ca(d)$. 

Dealing with finite images introduces boundary effects, which complicates the analysis of the the $S_\al(k)$-TWB and MGCS problems. Fortunately, computing the MGCS for infinite images leads to an asymptotically tight bound on the MGCS of finite images (Lemma \ref{lem:reduction}). Theorem \ref{thm:bounds} is a combination of this lemma and Corollary \ref{cor:bounds}. 

\begin{lemma}[Approximate finite to infinite reduction]
\label{lem:reduction}
   For all $d \in (0,1]$,
    \begin{align*}
        &\Ca(d) \geq \Ca(d,n,m) \geq \Ca(r_\al(d,n,m)), \text{ where }\\
        & r_8(d,n,m) := \frac{nmd}{(n+1)(m+1)}, ~~ r_4(d,n,m) := \frac{nmd+1}{(n+1)(m+1)}. 
    \end{align*}
\end{lemma}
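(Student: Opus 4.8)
The plan is to prove the two inequalities separately, in each case transferring a density-constrained minimizing image between the finite and infinite settings while keeping its maximum $\al$-component size under control.

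For the left inequality $\Ca(d) \geq \Ca(d,n,m)$: if $\Ca(d) = +\infty$ it is trivial, so I assume it is finite and take an infinite image $I^*$ attaining it, with $\rho(I^*)\geq d$ and $\maxCC_\al(I^*) = \Ca(d)$. First I would extract from $I^*$ an $n\times m$ window $W$ with $\rho(W)\geq d$. This is an averaging/pigeonhole argument over the fixed partition of $\Z^2$ into disjoint $n\times m$ blocks: if every $n\times m$ window had density $<d$, then—since finite-window densities are multiples of $\frac{1}{nm}$—they would all be bounded by the largest such multiple $d'<d$; covering the centered $j\times j$ region by these blocks, the white count is at most $d' j^2 + O(j)$, forcing $\rho(I^*)=\liminf_j \leq d' < d$, a contradiction. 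Restricting to $W$ can only split or shrink components, so each white $\al$-component of $W$ sits inside one of $I^*$, giving $\maxCC_\al(W)\leq\maxCC_\al(I^*)$. Since $W$ is an admissible $n\times m$ image of density $\geq d$, this yields $\Ca(d,n,m)\leq\maxCC_\al(W)\leq\Ca(d)$.

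For the right inequality $\Ca(d,n,m)\geq\Ca(r_\al(d,n,m))$: let $I$ be an $n\times m$ image attaining $\Ca(d,n,m)$, so $I$ has at least $dnm$ white pixels and $\maxCC_\al(I)=\Ca(d,n,m)$. I would build a doubly periodic infinite image $I^\infty$ with fundamental domain $(n+1)\times(m+1)$ by overlaying a width-one black grid (all cells with first coordinate $\equiv 0 \pmod{n+1}$ or second coordinate $\equiv 0 \pmod{m+1}$) and placing a translated copy of $I$ in each $n\times m$ block between grid lines. The key separation fact is that a width-one black grid isolates components for \emph{both} connectivities: any $d_\al$-neighbor of a block cell that lies outside its own block must be a grid cell, hence black (even the diagonal, $\al=8$, case, since a block cell's only escape routes step onto the grid line or its intersections). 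Thus every white $\al$-component of $I^\infty$ is confined to a single block, so its size is at most $\maxCC_\al(I)$.

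The density of $I^\infty$ equals $W/((n+1)(m+1))$, where $W$ is the white count per fundamental domain. In the $8$-connected case $W\geq dnm$, giving $\rho(I^\infty)\geq r_8$. The crucial improvement in the $4$-connected case is that each grid-intersection cell (first and second coordinate both $\equiv 0$) has all four of its $d_4$-neighbors on the grid lines, hence black; recoloring this single cell white per fundamental domain therefore creates only an isolated singleton $4$-component, raising $W$ to $\geq dnm+1$ and yielding $\rho(I^\infty)\geq r_4$, while leaving $\maxCC_4(I^\infty)=\maxCC_4(I)$ unchanged (a singleton cannot exceed the existing maximum, since $d>0$ forces $I$ to contain a white pixel). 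In either case $I^\infty$ is an infinite image of density $\geq r_\al$ with $\maxCC_\al(I^\infty)\leq\maxCC_\al(I)$, so by definition of the infinite MGCS, $\Ca(r_\al)\leq\maxCC_\al(I^\infty)\leq\Ca(d,n,m)$. The main obstacle is precisely this $+1$ discrepancy between $r_4$ and $r_8$: the entire subtlety is recognizing that, for $4$-connectivity only, the grid-intersection cells may be whitened without merging any block components, which is exactly what tightens the bound by one white pixel per period. A secondary care point is the liminf/averaging step in the left inequality, where the $O(j)$ boundary contribution and the rationality of finite-window densities must be handled to extract a window of density at least $d$ rather than one merely approaching $d$.
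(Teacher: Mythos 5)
Your proof is correct and takes essentially the same approach as the paper: an averaging argument over $n \times m$ blocks to extract a dense finite window for the left inequality, and a doubly periodic tiling of copies of the finite image separated by width-one black strips---with the grid-intersection pixel whitened only in the $4$-connected case to gain the extra $+1$ white pixel per period---for the right inequality. The only difference is bookkeeping in the left inequality, where you exploit the rationality of finite-window densities to get a window of density exactly $\geq d$, while the paper runs an $\epsilon$-approximation over lcm-aligned subimages and then rounds; both are valid.
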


\begin{proof} 
For any $d$, the definition of the MGCS implies there must exist an infinite image $I$ with density $d'\geq d$ that has $\maxCC_\al(I) = \Ca(d)$. Since $\rho(I) = d'$, Definition \ref{def:rho} implies that for every $\eps > 0$ there exists an $N_{\eps} \in \N$ such that for all $j \geq N_{\eps}$, the finite subimage $I_{j \times j}$ has density at least $d' -\eps$. For fixed $\eps$, take $j \geq N_\eps$ such that the least common multiple of $n$ and $m$ divides $j$, allowing us to partition $I_{j \times j}$ into $n \times m$ subimages. An averaging argument implies that there exists an $n \times m$ subimage $I' \subset I_{j \times j}$ with density at least $d' -\eps$, and hence $I'$ contains at least $\lceil nm(d'-\eps)\rceil$ white pixels. For sufficiently small $\eps$, $\lceil nm(d'-\eps)\rceil \geq nmd$, and hence $I'$ is an $n \times m$ subimage of $I$ with density at least $d$. Clearly $I'$ has no white $\al$-connected component of size greater than $\Ca(d)$, hence $\Ca(d) \geq \Ca(d,n,m)$.
 
To obtain the lower bound, for any density value $d$ there exists an $n \times m$ image $H$ with density $\rho(H) \geq d$ and $\maxCC_\al(H) = \Ca(d,n,m)$. Orient $H$ in the plane such that the two sides of length $n$ are vertical and the two sides of length $m$ are horizontal. Then expand $H$ by placing a $1 \times m$ black component along the bottom edge of $H$, and place a $n \times 1$ black component along the leftmost edge of $H$. Let $p$ be a single pixel that is white if $\al = 4$, black if $\al = 8$. After placing $p$ in the bottom left corner of the construction, we have transformed $H$ into an $(n+1) \times (m+1)$ image $I$ with density $r_\al(d,n,m)$. We can then then generate an infinite image $J$ that has the same density as $I$ by shifting copies of $I$ up, down, left, and right, as shown in Fig. \ref{fig:imgtiling}.

\begin{figure}[ht]
\centering
\includegraphics[width=0.6\textwidth]{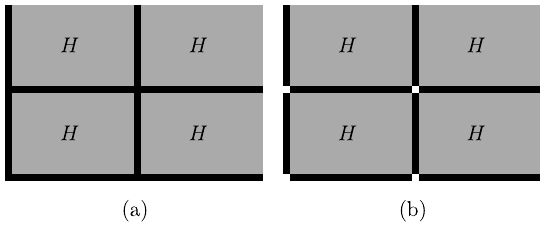}
\caption{The image $H$ can be tiled with boundaries such that white $\al$-connected components in the infinite tiling are isolated to copies of $H$. This construction for $\al = 8$ and $\al = 4$ is shown in (a) and (b), respectively.}
\label{fig:imgtiling}
\end{figure} 

Observe that the edges of every translated copy of $H$ are surrounded by a complete $P_\al$ perimeter of black pixels in $J$, hence the largest white $\al$-connected components in $J$ have size $\maxCC_\al(H)$. In the case $\al = 4$, there are isolated white pixels that appear in $J$ but not $H$, however these are irrelevant to the MGCS because $d > 0$, so we already know $H$ contains a white component of size at least 1. Therefore $\Ca(d,n,m) \geq \Ca(r_\al(d,n,m))$.
\end{proof}

\section{Exterior site perimeter}
\label{sec:pext}

We now return to considering the geometry of connected components. We know that any image containing an $\al$-connected component $T$ must also contain the site perimeter $P_\al(T)$. Since the topology of the site perimeter can be complicated, it will be helpful to partition the site perimeter into two sets according to the following definition (see examples in Figures \ref{fig:exampletile} and \ref{fig:exampletile2}).

\begin{definition}[$P_\al^{ext}, P_\al^{int}$, and $E_\al$]
    If $T$ is an $\al$-connected component, let $E_\al(T)$ denote the unique infinite $\al$-connected component in $G_\al(T)$ (recall Definition \ref{def:chargraph}). Define the exterior site perimeter $P_\al^{ext}(T)$ to be the graph induced by the set of vertices in $P_\al(T)$ that are $d_\al$-adjacent to a vertex in $E_\al(T)$. We then call $P_\al^{int}(T) := P_\al(T) \setminus P_\al^{ext}(T)$ the interior site perimeter. 
\end{definition}

By definition, switching the color of a vertex in $G_\al(T)$ from black to white connects $T$ and $E_\al(T)$ if and only if the vertex is in $P_\al^{ext}(T)$. In this work, it is often the case that $P_\al^{int}(T)$ is empty. This partition of the site perimeter is valuable because $P_\al^{ext}(T)$ is a simple cycle (Lemma \ref{lem:simple}), but $P_\al(T)$ need not even be connected. 

While Lemma \ref{lem:simple} is intuitively clear, the fact that connected components can have a nested structure adds subtlety to the proof.

\begin{lemma}
\label{lem:simple}
    $P_\al^{ext}(T)$ is a simple $\beta$-connected cycle.
\end{lemma}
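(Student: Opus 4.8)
The plan is to use the converse of the digital Jordan curve theorem (Theorem \ref{thm:jct}(2)) to establish that $P_\al^{ext}(T)$ is a simple $\beta$-connected cycle. Recall that $P_\al^{ext}(T)$ is defined as the set of site perimeter vertices that are $d_\al$-adjacent to the unique infinite $\al$-connected component $E_\al(T)$ of $G_\al(T)$. By the characterization in Theorem \ref{thm:jct}(2), it suffices to show that the complement $\overline{P_\al^{ext}(T)} = \Z^2 \setminus P_\al^{ext}(T)$ splits into exactly one finite and one infinite $\al$-connected component, with every point of $P_\al^{ext}(T)$ being $d_\al$-adjacent to both.

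First I would identify the two candidate $\al$-components of the complement. The infinite component should be $E_\al(T)$ itself: by definition every vertex of $P_\al^{ext}(T)$ touches $E_\al(T)$, and since $E_\al(T)$ is the infinite $\al$-component of $G_\al(T) = (\mathds{1}_{P_\al(T)})_\al$ (whose black vertices are exactly $P_\al(T)$), none of its vertices lie in $P_\al(T) \supseteq P_\al^{ext}(T)$, so $E_\al(T) \subseteq \overline{P_\al^{ext}(T)}$. The finite component should be the set $F := T \cup P_\al^{int}(T) \cup (\text{any enclosed white region})$, i.e. everything ``inside'' the boundary. The key verification here is that $F$ is $\al$-connected and finite: since $T$ is a finite connected component and $P_\al^{ext}(T)$ separates it from the unbounded region, $F$ is bounded, and I would argue $\al$-connectedness by showing every vertex of $F$ can be joined to $T$ by an $\al$-path avoiding $P_\al^{ext}(T)$ (using that the interior site perimeter $P_\al^{int}(T)$ consists precisely of boundary pixels not reachable from infinity, hence reachable only through the interior).

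The adjacency condition --- that each point of $P_\al^{ext}(T)$ is $d_\al$-adjacent to both $F$ and $E_\al(T)$ --- follows by definition for the $E_\al(T)$ side, and for the $F$ side from the fact that a site perimeter vertex is by definition $d_\al$-adjacent to $T \subseteq F$. The remaining and most delicate point is confirming there are \emph{no other} $\al$-components of $\overline{P_\al^{ext}(T)}$: every white or interior-perimeter pixel not in $E_\al(T)$ must be trapped inside $F$. The hard part will be handling the nested structure that the lemma statement explicitly warns about --- namely, $T$ may enclose holes containing black pixels (which lie in $P_\al^{int}(T)$) that themselves surround further white regions. I would argue that any such enclosed white region is $\al$-connected to $T$ through the complement of $P_\al^{ext}(T)$, because to escape to infinity it would have to cross $P_\al^{ext}(T)$, which by construction is the only part of the perimeter adjacent to $E_\al(T)$; thus all enclosed material collapses into the single finite component $F$. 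Once the two-component partition with the required adjacencies is established, Theorem \ref{thm:jct}(2) immediately yields that $P_\al^{ext}(T)$ is a $\beta$-connected simple cycle, completing the proof.
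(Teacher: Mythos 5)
Your overall strategy---verifying the hypotheses of the converse digital Jordan curve theorem (Theorem \ref{thm:jct}, part (2)) for $S = P_\al^{ext}(T)$---is the same one the paper uses, and several of your verifications are sound: $E_\al(T)$ is indeed a maximal $\al$-connected component of $\Z^2 \setminus P_\al^{ext}(T)$ (its white neighbors lie in $E_\al(T)$ by maximality, and its black neighbors lie in $P_\al^{ext}(T)$ by definition), and every vertex of $P_\al^{ext}(T)$ is $d_\al$-adjacent both to $E_\al(T)$ (by definition) and to $T$ (because $P_\al^{ext}(T) \subseteq P_\al(T)$). The genuine gap is in the step you yourself flag as ``most delicate'': showing that everything else in the complement forms a \emph{single} finite $\al$-connected component $F$. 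Your argument is that an enclosed white region ``would have to cross $P_\al^{ext}(T)$ to escape to infinity.'' That only shows that the component of the complement containing such a region is \emph{bounded}; it does not show that this component contains $T$. A priori, a nested white component $\tau$ of $G_\al(T)$ could have all of its $d_\al$-neighbors in $P_\al^{ext}(T)$ rather than in $P_\al^{int}(T)$, in which case $\tau$ would be a second, separate finite $\al$-component of $\Z^2 \setminus P_\al^{ext}(T)$; the two-component hypothesis of Theorem \ref{thm:jct}(2) would then fail and your proof would halt. Nothing in your write-up rules this configuration out: ``enclosed'' and ``$\al$-connected to $T$ within the complement'' are different statements, and conflating them is circular.

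This is precisely the subtlety that forces the paper into a structural induction on the tree of nested components. There, one argues: if every nested component $\tau_i$ satisfies $P_{ext}(\tau_i) \cap P_\al^{int}(T) \neq \emptyset$, then each $\tau_i$ is joined to $T$ through a pixel of $P_\al^{int}(T)$ (such a pixel is $d_\al$-adjacent to both $\tau_i$ and $T$ and avoids $P_\al^{ext}(T)$), which is exactly the connectivity claim you left unproved; while in the remaining case $P_{ext}(\tau_i) \subseteq P_\al^{ext}(T)$, the \emph{inductive hypothesis} that $P_{ext}(\tau_i)$ is itself a simple $\beta$-connected cycle, combined with part (1) of Theorem \ref{thm:jct}, forces $P_\al^{ext}(T) = P_{ext}(\tau_i)$, so the lemma holds anyway. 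Your proof, being non-inductive, has no mechanism to handle this second case at all. To repair it you would need to prove that every white $\al$-connected component of $G_\al(T)$ other than $T$ and $E_\al(T)$ has at least one $d_\al$-neighbor in $P_\al^{int}(T)$, or otherwise dispose of the alternative---and that is where the real content of the lemma lies. (A smaller point: your parenthetical description of $P_\al^{int}(T)$ as the pixels ``not reachable from infinity'' is not its definition; it is defined by non-adjacency to $E_\al(T)$, and the equivalence of the two is part of what is at stake.)
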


\begin{proof}
We claim that for any $\al$-connected component $T$, $P_\al^{ext}(T)$ is a simple $\beta$-connected cycle. This claim can be proven by structural induction. In particular, we can show $P_\al^{ext}(T)$ is a simple $\beta$-connected cycle if $P_{ext}(\tau_i)$ is a simple $\beta$-connected cycle for each of the (maximal) white $\al$-connected components $\tau_1, \dots, \tau_n \subset G_\al(T)$ that are distinct from $T$. The intuition here is that if $P_\al^{ext}(T) = P_\al(T)$, the desired result is an immediate consequence of the converse of the digital Jordan curve theorem (Theorem \ref{thm:jct}, part (2)). If $P_\al^{ext}(T) \neq P_\al(T)$, we can switch each vertex in $P_\al^{int}(T)$ from black to white to reduce to the easy case. For purely organizational purposes, we use a tree to describe the ``nested'' component relationship. Specifically, we let $\mc{T}_T$ denote a rooted tree whose nodes are connected components, with root node $T$. The children of any node $\tau$ are precisely the (maximal) white $\al$-connected components that are distinct from $\tau$ in $G_\al(\tau)$. 

For the base case, $\mc{T}_T$ is a single node with no children, hence all of the white vertices in $G_\al(T)$ are contained in either $T$ or $E_\al(T)$. Switch each vertex $v \in P_\al^{int}(T) \subset G_\al(T)$ from black to white, and update the edge set of the resulting graph $G_\al(T)'$ accordingly. By the definition of $P_\al^{int}(T)$, each vertex in $P_\al^{int}(T)$ that is switched from black to white becomes $\al$-connected to $T$ but does \textit{not} become $\al$-connected to $E_\al(T)$, hence the number of white $\al$-connected components is unchanged by this process. Therefore $G_\al(T)'$ contains only one finite white $\al$-connected component, which we call $T'$, and one infinite white $\al$-connected component, $E_\al(T)$. Moreover, each black vertex $v \subset G_\al(T)'$ is $d_\al$-adjacent to both $T'$ and $E_\al(T)$ by the definition of $P_\al^{ext}(T)$. Part (2) of Theorem \ref{thm:jct} then states that $P_\al^{ext}(T)$ is a simple $\beta$-connected cycle. 

Now suppose that the claim is true for the $\al$-connected components $\tau_1, \dots, \tau_n$ that are the children of $T$ in $\mc{T}_T$. To prove the desired claim for $T$, we consider what happens to $G_\al(T)$ when all the vertices in $P_\al^{int}(T)$ are changed from black to white. In the first case, we assume all the exterior site perimeters of the children of $T$ have nonempty intersection with $P_\al^{int}(T)$. In the second case, we assume the exterior site perimeter of at least one child of $T$ has empty intersection with $P_\al^{int}(T)$, and hence is a subset of $P_\al^{ext}(T)$. In the proof, it becomes clear that this second case is impossible, but we do not need to show this. 

If $P_{ext}(\tau_i) \cap P_\al^{int}(T) \neq \emptyset$ for all $i \in [n]$, changing every vertex in $P_\al^{int}(T)$ from black to white must connect $T$ and $\tau_i$, and hence $P_\al^{ext}(T)$ is a simple $\beta$-connected cycle by the same argument we used in the base case. If (hypothetically) there exists an $i \in [n]$ such that $P_{ext}(\tau_i) \cap P_\al^{int}(T) = \emptyset$, then $P_{ext}(\tau_i) \subset P_\al^{ext}(T)$. Consider the subgraphs induced by the vertex sets of $P_{ext}(\tau_i)$ and $P_\al^{ext}(T) \setminus P_{ext}(\tau_i)$. By the inductive hypothesis and the assumption that $P_{ext}(\tau_i) \subset P_\al^{ext}(T)$, $P_{ext}(\tau_i)$ is a simple $\beta$-connected cycle whose vertices are $d_\al$-adjacent to both $T$ and $E_\al(T)$. It follows by the digital Jordan curve theorem that $T$ is contained in the interior face of $P_{ext}(\tau_i)$ and $E_\al(T)$ is contained in the exterior face. Furthermore, any vertex in $P_\al^{ext}(T) \setminus P_{ext}(\tau_i)$ that is in the interior face of $P_{ext}(\tau_i)$ cannot be $d_\al$-adjacent to $E_\al(T)$, and any vertex in the exterior face of $P_{ext}(\tau_i)$ cannot be $d_\al$-adjacent to $T$. Therefore $P_\al^{ext}(T) \setminus P_{ext}(\tau_i) = \emptyset$, so $P_\al^{ext}(T) = P_{ext}(\tau_i)$ is a simple $\beta$-connected cycle. 

Finally, note that the site perimeter uniquely characterizes each connected component, so $P_\al(\tau) \subsetneq P_\al(T)$ for each $\tau$ that is a child of $T$. It follows that the height of $\mc{T}_T$ is upper bounded by $|P_\al(T)|$, which is finite because $T$ is a finite component. The result follows.
\end{proof}

By Lemma \ref{lem:simple}, the embedding of $P_\al^{ext}(T)$ in $\R^2$ is a simple polygon. This allows us to define the following point sets. 

\begin{definition} If $T$ is an $\al$-connected component, let $V_\al(T)$ denote the open polygonal region in $\R^2$ whose boundary is $P_\al^{ext}(T)$. Then let $Z_\al(T) := V_\al(T) \cap \Z^2$.
\label{def:VZ}
\end{definition}

\noindent An important consequence Lemma \ref{lem:simple} is that
\begin{equation}
\label{eq:Z2partition}
    Z_\al(T) \sqcup E_\al(T) \sqcup P_\al^{ext}(T)  = \Z^2.
\end{equation}

Unsurprisingly, the interior site perimeter of isoperimetrically optimal components is always empty (Lemma \ref{lem:minpext}). We know that $\sigma_\al(k)$---the minimum value of $|P_\al(T)|$ among all $\al$-connected components $T$ of size $k$---is a nondecreasing function (Lemmas \ref{lem:minperim} and \ref{lem:minperim8}). We employ this fact to show the following result.

\begin{lemma} For all $\al$-connected components $T$ with $|T|=k$, $|P_\al^{ext}(T)| \geq \sigma_\al(k)$.
\label{lem:minpext}
\end{lemma}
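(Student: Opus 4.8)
The plan is to reinterpret $P_\al^{ext}(T)$ as the \emph{full} site perimeter of the filled-in region that it bounds, and then apply the isoperimetric inequalities (Lemma \ref{lem:minperim} for $\al=4$, Lemma \ref{lem:minperim8} for $\al=8$) directly to that region. By Lemma \ref{lem:simple}, $P_\al^{ext}(T)$ is a simple $\beta$-connected cycle, so the digital Jordan curve theorem (Theorem \ref{thm:jct}, part (1)) applies: $\Z^2 \setminus P_\al^{ext}(T)$ decomposes into the finite $\al$-connected component $Z_\al(T)$ and the infinite $\al$-connected component $E_\al(T)$ of \eqref{eq:Z2partition}. I would treat $Z_\al(T)$ as an $\al$-connected set in its own right and estimate $|P_\al^{ext}(T)|$ through $\sigma_\al(|Z_\al(T)|)$.

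The two facts I would establish are $T \subseteq Z_\al(T)$ and $P_\al(Z_\al(T)) = P_\al^{ext}(T)$. The first follows from the partition \eqref{eq:Z2partition}: the pixels of $T$ are white, hence disjoint from $P_\al^{ext}(T) \subseteq P_\al(T)$, and they are disjoint from $E_\al(T)$ (the infinite white component in $G_\al(T)$, which is a distinct white component from $T$), so $T \subseteq Z_\al(T)$ and $k' := |Z_\al(T)| \geq |T| = k$. For the second, one inclusion is immediate from Theorem \ref{thm:jct}: every vertex of the cycle $P_\al^{ext}(T)$ is $d_\al$-adjacent to the interior component $Z_\al(T)$ and lies outside it, so $P_\al^{ext}(T) \subseteq P_\al(Z_\al(T))$. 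The reverse inclusion uses separation: if some $v \in \Z^2 \setminus Z_\al(T)$ were $d_\al$-adjacent to $Z_\al(T)$ but not on the cycle, then $v \in E_\al(T)$ and $v$ would be directly $\al$-connected to an interior point within $\Z^2 \setminus P_\al^{ext}(T)$, contradicting that $Z_\al(T)$ and $E_\al(T)$ are distinct $\al$-connected components. Hence $P_\al(Z_\al(T)) \subseteq P_\al^{ext}(T)$, giving equality.

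With these identities in hand the lemma follows quickly: applying Lemma \ref{lem:minperim} or Lemma \ref{lem:minperim8} to the $\al$-connected set $Z_\al(T)$ yields $|P_\al^{ext}(T)| = |P_\al(Z_\al(T))| \geq \sigma_\al(k')$, and since the closed-form expressions make $\sigma_\al$ nondecreasing while $k' \geq k$, we get $\sigma_\al(k') \geq \sigma_\al(k)$ and therefore $|P_\al^{ext}(T)| \geq \sigma_\al(k)$. I expect the only delicate point to be the reverse inclusion $P_\al(Z_\al(T)) \subseteq P_\al^{ext}(T)$: one must invoke the fact that the interior and exterior $\al$-components produced by the digital Jordan curve theorem contain no pair of $d_\al$-adjacent points, which is exactly the separation content of Theorem \ref{thm:jct}. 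Everything else reduces to bookkeeping on the partition \eqref{eq:Z2partition} together with the monotonicity of $\sigma_\al$.
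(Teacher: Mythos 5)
Your proposal is correct and follows essentially the same route as the paper's proof: pass to the filled-in component $Z_\al(T)$, identify $P_\al(Z_\al(T)) = P_\al^{ext}(T)$, and conclude via $T \subseteq Z_\al(T)$ together with the monotonicity of $\sigma_\al$. The only difference is that you carefully justify the identity $P_\al(Z_\al(T)) = P_\al^{ext}(T)$ and the containment $T \subseteq Z_\al(T)$ via Theorem \ref{thm:jct} and the partition \eqref{eq:Z2partition}, whereas the paper asserts these ``by construction.''
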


\begin{proof}
Suppose $T$ is an $\al$-connected component of size $k$, and consider the (white) $\al$-connected component $Z_\al(T)$. Note that $P_\al(Z_\al(T)) = P_\al^{ext}(T)$ by construction. Additionally, $|Z_\al(T)| = k + \ell$ for some $\ell \geq 0$ because $T \subset Z_\al(T)$. Since $\sigma_\al(k)$ is a nondecreasing function, $|P_\al^{ext}(T)| = |P_\al(Z_\al(T))| \geq \sigma_\al(k+\ell) \geq \sigma_\al(k)$. 
\end{proof}

\section{Polygonal tiles}
\label{sec:PT}

We aim to cover all the white area of a binary image with disjoint regions---each containing a single white $\al$-connected component---whose densities have a nontrivial upper bound. As a warm-up, the simplest such covering is the following. To each white $\al$-connected component $T \subset \Z^2$ in an image, associate the region $\{x \in \R^2: d_\al(x,T) < 1\}$. While this covering is nearly optimal in the $8$-connected component case, it is too small to yield a tight upper bound on $\Phi_4(k)$. 

This leads to the following question: if we want a disjoint covering of each white $\al$-connected component in an image, what is the \textit{maximal} region that can be independently assigned to cover each connected component? Here, each region should depend on only the component it contains, not on any of the other components in the image. The goal of this section is to show that these maximal covering regions---called \textit{polygonal tiles}---can be constructed from $V_\al(T)$, which we defined in the previous section. We first prove an important metric property of this set.

\begin{lemma}[$V_\al(T)$ is a Voronoi region]
\label{lem:voronoi}
\begin{equation*}
    V_\al(T) = \{x \in \R^2 ~|~ d_\al(x, Z_\al(T)) < d_\al(x, E_\al(T))\}.
\end{equation*}
\end{lemma}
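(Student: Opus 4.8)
The plan is to compare the partition of $\R^2$ induced by the polygon with the partition induced by the sign of $f(x) := d_\al(x, Z_\al(T)) - d_\al(x, E_\al(T))$. Write $Z = Z_\al(T)$, $E = E_\al(T)$, $V = V_\al(T)$, let $\partial V$ denote the polygon curve $P_\al^{ext}(T)$, and let $\mathrm{Ext}$ be its open exterior; Lemma \ref{lem:simple} gives the disjoint decomposition $\R^2 = V \sqcup \partial V \sqcup \mathrm{Ext}$. Both $\{f < 0\}$ and $\{f > 0\}$ are open (the maps $x \mapsto d_\al(x, \cdot)$ are continuous) and $\R^2 = \{f<0\} \sqcup \{f=0\} \sqcup \{f > 0\}$. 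I will establish the three inclusions $V \subseteq \{f < 0\}$, $\mathrm{Ext} \subseteq \{f > 0\}$, and $\partial V \subseteq \{f = 0\}$. Since both are partitions of $\R^2$ into three disjoint pieces and each piece of the first lands in the correspondingly labelled piece of the second, all three inclusions are forced to be equalities (if $A_i \subseteq B_i$ with $\bigsqcup A_i = \bigsqcup B_i$ and the $B_i$ disjoint, then $A_i = B_i$); in particular $V = \{f < 0\}$, which is the claim.

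First the bulk. For $w \in \Z^2$ let $Q_w$ be the open unit square centered at $w$. A short check shows $Q_w$ is the open $d_\al$-Voronoi cell of $w$ for both $\al \in \{4,8\}$, i.e. $d_\al(x,w) < d_\al(x, w')$ for every $x \in Q_w$ and every lattice point $w' \neq w$. Hence if $w \in Z$, then for $x \in Q_w$ we get $d_\al(x, Z) = d_\al(x,w) < d_\al(x, E)$ (as $E \subseteq \Z^2 \setminus \{w\}$), so $Q_w \subseteq \{f < 0\}$; symmetrically $Q_w \subseteq \{f > 0\}$ when $w \in E$. Moreover, since consecutive polygon vertices are $\beta$-adjacent, every edge of $P_\al^{ext}(T)$ is a unit axis segment (when $\al = 8$) or an axis or diagonal segment between adjacent lattice points (when $\al = 4$); in either case such a segment lies in the union $\overline{Q_p} \cup \overline{Q_q}$ of the closed squares of its two endpoints, which are points of $P_\al^{ext}(T)$. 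As distinct closed unit squares meet only along their boundaries, $\partial V$ enters no open square $Q_w$ with $w \in Z \cup E$, so by \eqref{eq:Z2partition} each such square lies entirely in $V$ (if $w \in Z$) or in $\mathrm{Ext}$ (if $w \in E$). This verifies the three inclusions on $\bigcup_{w \in Z \cup E} Q_w$, and the grid-line points lying between two such squares are handled identically.

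It remains to treat the squares $Q_w$ with $w \in P_\al^{ext}(T)$, where the polygon actually lives; this is the crux. Fix such a $w$. By Theorem \ref{thm:jct}, $w$ is $d_\al$-adjacent to at least one point of $Z$ and at least one point of $E$, and its two incident edges run to its two $\beta$-neighbours on the cycle. The plan is to show, by a finite case analysis on which $d_\al$-neighbours of $w$ lie in $Z$ versus $E$ and on the local shape of the $\beta$-cycle at $w$ (a straight step or a turn), that inside $\overline{Q_w}$ the bisector $\{f=0\}$ coincides exactly with the incident polygon edges, the interior side being $\{f<0\}$. In each configuration one exhibits a flanking pair $z \in Z$, $e \in E$ whose $d_\al$-bisector restricted to $\overline{Q_w}$ is precisely the edge: for $\al = 8$ these flank an axis edge across a grid line, so the $\ell_\infty$-bisector is that line locally, while for $\al = 4$ a diagonal edge from $p$ to $q$ is the $\ell_1$-bisector of the perpendicular diagonal pair (e.g.\ the segment $x_1 = x_2$ is the $\ell_1$-bisector of $(0,1)$ and $(1,0)$). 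The genuine difficulty, and the reason a one-line triangle-inequality estimate fails, is that the $\ell_1$ and $\ell_\infty$ bisectors of a \emph{single} pair of points are not curves but contain two-dimensional ``fat'' regions; one must use the neighbouring flanking pairs along the cycle—equivalently, the fact that $d_\al(\cdot, Z)$ and $d_\al(\cdot, E)$ are minima over all of $Z$ and $E$, not over one point—to trim these fat regions down to the polygon edges within $Q_w$. Once this local claim is verified in every case, the three global inclusions follow and the proof is complete.
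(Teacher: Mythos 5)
Your proposal is correct and follows essentially the same route as the paper's proof: the paper likewise partitions $\R^2$ into the unit squares around $Z_\al(T)$, $E_\al(T)$, and $P_\al^{ext}(T)$, handles the first two regions by the observation that an open unit square is the $d_\al$-Voronoi cell of its center, and resolves the perimeter squares by a finite case analysis of the local contours (three for $\al=8$, six for $\al=4$), verifying that the bisector there coincides with the embedded polygon edges. Your remark about the ``fat'' $\ell_1/\ell_\infty$ bisectors of single point pairs correctly identifies the subtlety that this local case analysis (done in the paper via explicit figures) is needed to resolve.
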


\begin{proof}
    To obtain equality between the two sets, we show that all points in $V_\al(T)$ satisfy the desired property, and all points not in $V_\al(T)$ do not satisfy this property. We will first use the fact that $Z_\al(T) \sqcup E_\al(T) \sqcup P_\al^{ext}(T) = \Z^2$ \eqref{eq:Z2partition} to generate a suitable partition of $\R^2$ into three regions. For two of these regions, a distance argument is sufficient to show the desired result, and for the third region---the points near the boundary of $V_\al(T)$---we proceed via a simple case analysis. 

    Any partition on $\Z^2$ induces a trivial partition on $\R^2$ by coordinate-wise rounding elements of $\R^2$ to the nearest integer. Our partition is nearly identical to this trivial partition, but coordinates in $\Z + \frac{1}{2}$ will be treated differently. For any $K \subseteq \Z^2$, define $\bx(K) = \{x \in \R^2 ~|~ d_8(x,K) \leq \frac{1}{2}\}$, i.e. $\bx(K)$ is the union of closed $1 \times 1$ square regions centered at each point in $K$. Then, letting $\Int(S)$ denote the topological interior of a set $S$,
    \begin{equation}
        \Int(\bx(Z_\al(T))) \sqcup \Int(\bx(E_\al(T))) \sqcup \bx(P_\al^{ext}(T)) = \R^2.
    \end{equation} 
    This result follows from the fact that $\Int(\bx(Z_\al(T))) \cup \Int(\bx(E_\al(T))) = \Int(\bx(Z_\al(T) \cup E_\al(T)))$ because $Z_\al(T)$ and $E_\al(T)$ are not 4-connected, and $\Int(\bx(W)) \sqcup \bx(\Z^2 \setminus W) = \R^2$ for any $W \subset \Z^2$. In particular, set $W = Z_\al(T) \cup E_\al(T)$ and apply \eqref{eq:Z2partition}.

     Since $P_\al^{ext}(T)$ is an 8-connected cycle (for $\al = 4,8$), the edges of the cycle are entirely contained in $\bx(P_\al^{ext}(T))$. It follows that $\Int(\bx(Z_\al(T))) \subseteq V_\al(T)$ and $\Int(\bx(E_\al(T))) \subseteq \R^2 \setminus V_\al(T)$. We can now verify the desired result for these two subsets.
     
     Observe that if $z \in \Z^2$ and $x \in \Int(\bx(z))$, then $d_\al(x,z) < d_\al(x, \Z^2 \setminus z)$. Consequently, any $x \in \Int(\bx(Z_\al(T)))$ is closer (in the $d_\al$ metric) to $Z_\al(T)$ than it is to any vertex in $E_\al(T)$. Similarly, any $x \in \Int(\bx(E_\al(T)))$ is closer to $E_\al(T)$ than it is to any vertex in $Z_\al(T)$.

     For the final case, $P_\al^{ext}(T)$ is a $\beta$-connected simple cycle, so there are only finitely many pixel arrangements we need to consider. In the cases $\al = 8$ and $\al = 4$, there are respectively only three and six possible contours around a vertex $b \in P_\al^{ext}(T)$ (up to isometries of $\Z^2$). In each case, we can directly compute the set $\{x \in \bx(b) ~|~ d_\al(x, Z_\al(T)) < d_\al(x, E_\al(T))\}$ (Figs. \ref{fig:localboundary8} and \ref{fig:localboundary4}). In the figures, we observe that the boundary between the points $x \in \bx(b)$ that have $d_\al(x, Z_\al(T)) < d_\al(x, E_\al(T))$ (red) and $d_\al(x, Z_\al(T)) > d_\al(x, E_\al(T))$ (blue) is precisely the embedding of the edges of the graph $P_\al^{ext}(T)$ in $\R^2$, which proves the desired result.

\begin{figure}[ht]
    \centering
    \includegraphics[width=0.47\textwidth]{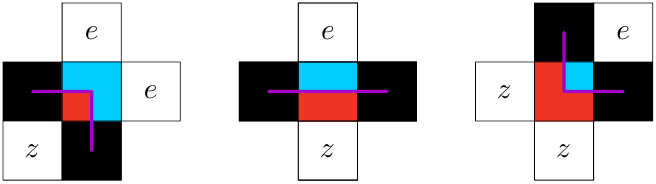}
    \caption{The three local boundary contours of $P_8^{ext}(T)$. The colored central box is $\bx(b)$. We specify the vertices $z \in Z_8(T)$, $e \in E_8(T)$, and color $x \in \bx(b)$ red if $d_8(x, Z_8(T)) < d_8(x, E_8(T))$, blue if $d_8(x, Z_8(T)) > d_8(x, E_8(T))$, and purple if $d_8(x, Z_8(T)) = d_8(x, E_8(T))$. Not all of the vertices $z \in Z_8(T)$ need to be white as pictured.}
    \label{fig:localboundary8}
\end{figure}

\begin{figure}[ht]
    \centering
    \includegraphics[width=\textwidth]{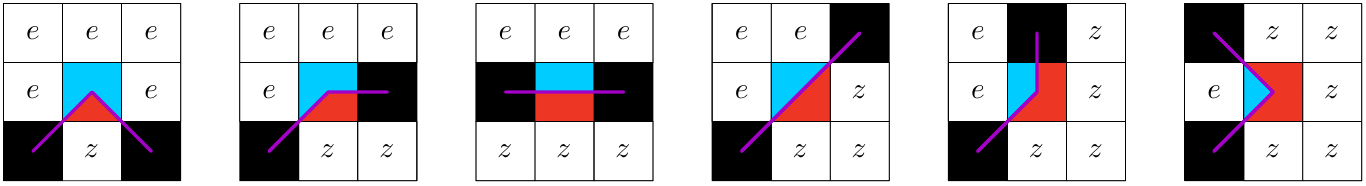}
    \caption{The six local boundary contours of $P_4^{ext}(T)$. The colored central box is $\bx(b)$. We specify the vertices $z \in Z_4(T)$, $e \in E_4(T)$, and color $x \in \bx(b)$ red if $d_4(x, Z_4(T)) < d_4(x, E_4(T))$, blue if $d_4(x, Z_4(T)) > d_4(x, E_4(T))$, and purple if $d_4(x, Z_4(T)) = d_4(x, E_4(T))$. Not all of the vertices $z \in Z_4(T)$ need to be white as pictured.}
    \label{fig:localboundary4}
\end{figure}
\end{proof}

If $T_1$ and $T_2$ are distinct white components in an image, it may be the case that $V_\al(T_1) \subset V_\al(T_2)$. Since we want to construct \textit{disjoint} sets that cover each white component, this motivates the following definition.

\begin{definition}[Polygonal tile of $T$]
\label{def:pt}
Given an $\al$-connected component $T$, let $\mc{M}$ be the set of all white $\al$-connected subgraphs of $G_\al(T)$ that are contained in $Z_\al(T) \setminus T$. Then the polygonal tile of $T$ is the set $\PP_\al(T)$ colored by $\mathds{1}_{P_\al(T)}$ in the canonical way, where
    \begin{equation*}
        \PP_\al(T) := V_\al(T) \setminus \bigcup_{\tau \in \mc{M}} V_\al(\tau).
    \end{equation*}
\end{definition}

\begin{figure}[ht]
\centering
\includegraphics[width=\textwidth]{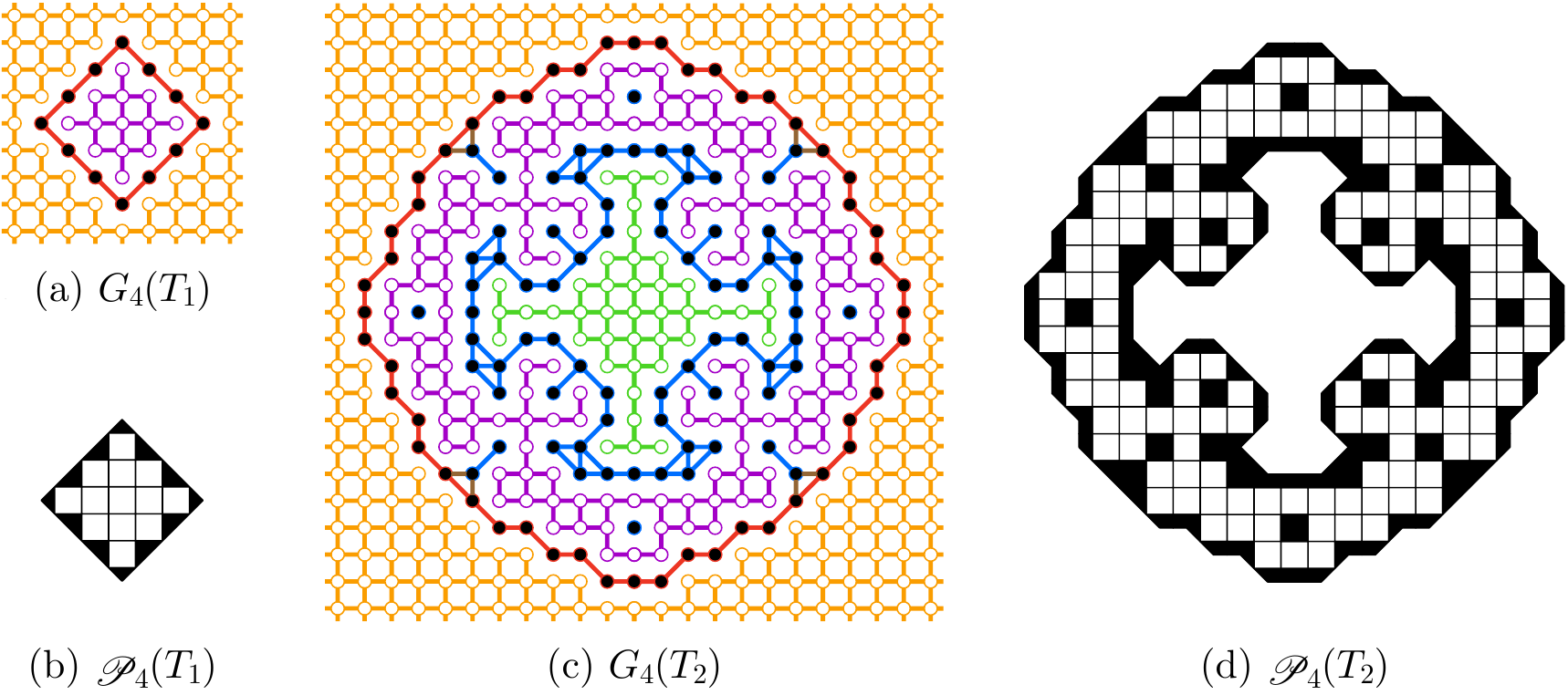}
\caption{Example characteristic graphs and polygonal tiles of two 4-connected components. For the respective component $T \in \{T_1, T_2\}$ in (a) and (c), we highlight $T$ (purple), $P_4^{ext}(T)$ (red), and $E_4(T)$ (orange). In (c) we also have $P_4^{int}(T_2)$ (blue), a finite $4$-connected component that $T_2$ surrounds (green), and 8 edges between $P_4^{ext}(T_2)$ and $P_4^{int}(T_2)$ (brown).}
\label{fig:exampletile}
\end{figure} 

\begin{figure}[ht]
\centering
\includegraphics[width=\textwidth]{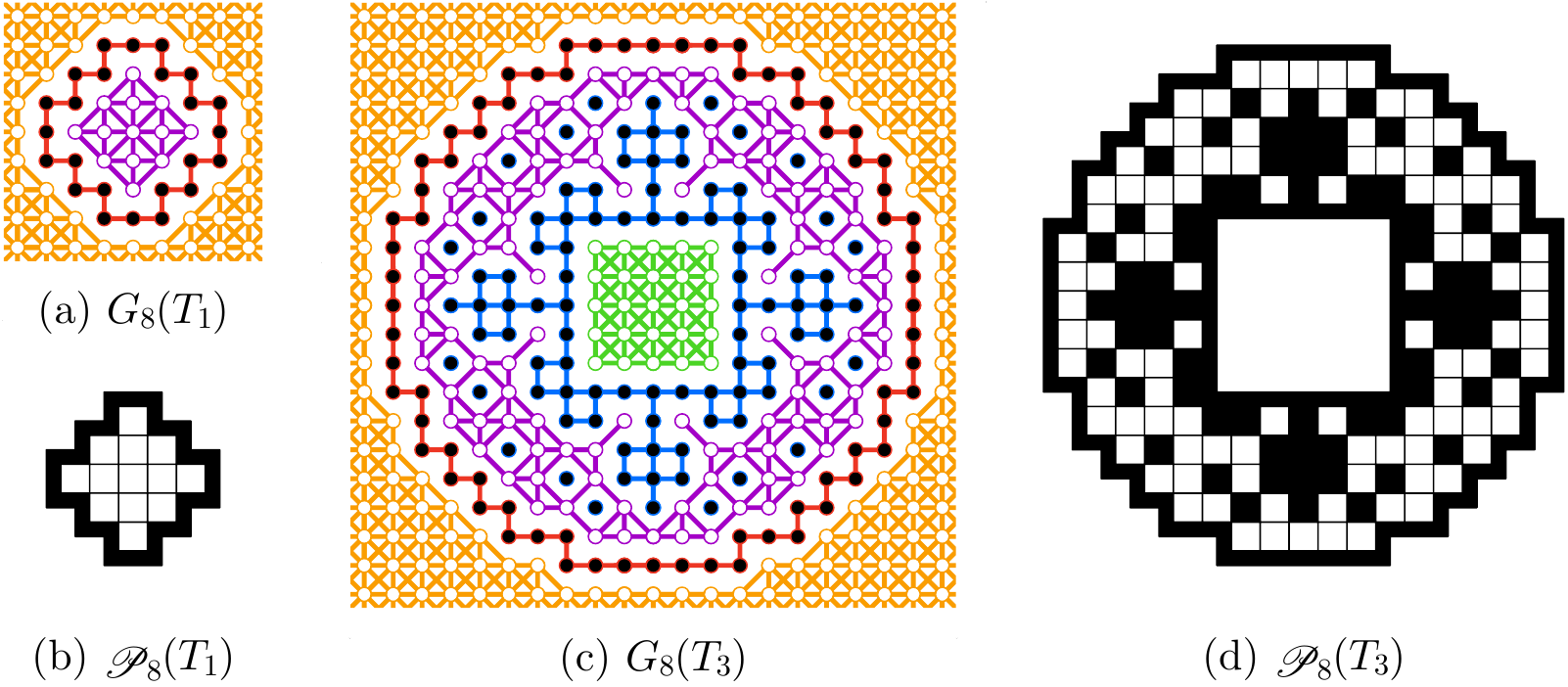}
\caption{Example characteristic graphs and polygonal tiles of two 8-connected components. For the respective component $T \in \{T_1, T_3\}$ in (a) and (c), we highlight $T$ (purple), $P_8^{ext}(T)$ (red), and $E_8(T)$ (orange). In (c) we also have $P_8^{int}(T_3)$ (blue), and a finite $8$-connected component that $T_3$ surrounds (green).}
\label{fig:exampletile2}
\end{figure} 

The desired maximality condition satisfied by polygonal tiles is a direct consequence of Definition \ref{def:pt} and the Voronoi property of $V_\al$ (Lemma \ref{lem:voronoi}). The set $\mc{M}$ that appears in the definition is specifically chosen to guarantee that nested polygonal tiles are disjoint. A more intuitive definition would be to only remove $V_\al(\tau)$ from $V_\al(T)$ if $\tau \in Z_\al(T)\setminus T$ is a (maximal) white $\al$-connected component in $G_\al(T)$. While it is intuitively clear that this would yield an equivalent definition, we do not prove this equivalence, and using Definition \ref{def:pt} simplifies the proof of Lemma \ref{lem:disjoint}. We now show that the polygonal tiles of each of the white connected components in an image are disjoint.

\begin{lemma}
\label{lem:disjoint}
     For any distinct white $\al$-connected components $T_1, T_2$ that appear in a binary image, $$\PP_\al(T_1) \cap \PP_\al(T_2) = \emptyset.$$
\end{lemma}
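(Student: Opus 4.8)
The plan is to classify the relationship between $T_1$ and $T_2$ at the pixel level using the digital Jordan curve theorem, and then handle the resulting cases in the way the construction of $\mc{M}$ in Definition \ref{def:pt} is designed for. Since $P_\al^{ext}(T_i)$ is a simple $\beta$-connected cycle (Lemma \ref{lem:simple}), Theorem \ref{thm:jct} gives $\Z^2 \setminus P_\al^{ext}(T_i) = Z_\al(T_i) \sqcup E_\al(T_i)$, the finite and infinite $\al$-connected components. Because $T_2$ is white and $\al$-connected while $P_\al^{ext}(T_1)$ is black (by maximality of $T_1$), $T_2$ is disjoint from $P_\al^{ext}(T_1)$ and hence lies entirely in $Z_\al(T_1)$ or entirely in $E_\al(T_1)$; symmetrically for $T_1$ against $P_\al^{ext}(T_2)$. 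This splits into two genuinely different situations: a \emph{nested} case, $T_2 \subseteq Z_\al(T_1)$ (or $T_1 \subseteq Z_\al(T_2)$), and a \emph{mutually exterior} case, $T_2 \subseteq E_\al(T_1)$ and $T_1 \subseteq E_\al(T_2)$.

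In the nested case, say $T_2 \subseteq Z_\al(T_1)$, I would observe that $T_2$ is a white $\al$-connected subgraph of $G_\al(T_1)$ contained in $Z_\al(T_1) \setminus T_1$: its pixels avoid the only black pixels $P_\al(T_1)$ of $G_\al(T_1)$, are disjoint from $T_1$, and inherit their $\al$-connectivity from $I$. Hence $T_2 \in \mc{M}$, so $V_\al(T_2)$ is one of the regions subtracted in Definition \ref{def:pt}. Since $V_\al(T)$ depends only on $T$ itself (through $P_\al^{ext}(T)$) and not on the ambient image, this gives $\PP_\al(T_2) \subseteq V_\al(T_2) \subseteq \bigcup_{\tau \in \mc{M}} V_\al(\tau)$, which is removed from $V_\al(T_1)$; thus $\PP_\al(T_1) \cap \PP_\al(T_2) = \emptyset$. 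The case $T_1 \subseteq Z_\al(T_2)$ is identical with the roles reversed.

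The main work is the mutually exterior case, where I would prove the stronger statement $V_\al(T_1) \cap V_\al(T_2) = \emptyset$ via the Voronoi description (Lemma \ref{lem:voronoi}). The key sub-claim is that pixel-level separation lifts to the enclosed regions: $Z_\al(T_1) \subseteq E_\al(T_2)$ and $Z_\al(T_2) \subseteq E_\al(T_1)$. To see $Z_\al(T_2) \cap P_\al^{ext}(T_1) = \emptyset$, a common pixel $q$ would be $d_\al$-adjacent to some $t \in T_1 \subseteq E_\al(T_2)$, while $q \in Z_\al(T_2) \subseteq \Z^2 \setminus P_\al^{ext}(T_2)$; two $d_\al$-adjacent pixels of $\Z^2 \setminus P_\al^{ext}(T_2)$ lie in a common $\al$-component, contradicting $q \in Z_\al(T_2)$, $t \in E_\al(T_2)$. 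Using that $Z_\al(T_2)$ is $\al$-connected, any pixel of $Z_\al(T_2) \cap Z_\al(T_1)$ would be joined inside $Z_\al(T_2)$ to $T_2 \subseteq E_\al(T_1)$ by an $\al$-path that must cross $P_\al^{ext}(T_1)$, which the previous step forbids; hence $Z_\al(T_2) \subseteq E_\al(T_1)$ (and symmetrically). Finally, for any $x \in V_\al(T_1)$, combining $d_\al(x, Z_\al(T_1)) < d_\al(x, E_\al(T_1))$ with the inclusions $E_\al(T_2) \supseteq Z_\al(T_1)$ and $E_\al(T_1) \supseteq Z_\al(T_2)$ chains to $d_\al(x, E_\al(T_2)) \leq d_\al(x, Z_\al(T_1)) < d_\al(x, E_\al(T_1)) \leq d_\al(x, Z_\al(T_2))$, so $x \notin V_\al(T_2)$.

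I expect the mutually exterior case to be the main obstacle, since it is the one situation not dispatched automatically by the bookkeeping in $\mc{M}$: it requires promoting the ``a component cannot straddle an exterior perimeter'' fact from the white components $T_i$ to their full enclosed integer regions $Z_\al(T_i)$ before the Voronoi inequalities can be chained. Throughout I would rely on the partition \eqref{eq:Z2partition} and the separation property of $P_\al^{ext}$ furnished by Theorem \ref{thm:jct}. I would also note that the degenerate possibility of simultaneous mutual nesting ($T_2 \subseteq Z_\al(T_1)$ and $T_1 \subseteq Z_\al(T_2)$) is harmless, as the nested argument already applies to it regardless of whether it can actually occur.
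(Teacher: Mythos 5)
Your proof is correct and takes essentially the same route as the paper's: the nested case is dispatched by membership of the inner component in the set $\mc{M}$ from Definition \ref{def:pt}, and the mutually exterior case by establishing the inclusions $Z_\al(T_1) \subseteq E_\al(T_2)$ and $Z_\al(T_2) \subseteq E_\al(T_1)$ and then chaining the Voronoi inequalities of Lemma \ref{lem:voronoi}. The only immaterial difference is how the inclusions are proved: you show $Z_\al(T_2)$ avoids $P_\al^{ext}(T_1)$ and cannot straddle the two complementary components of that cycle, whereas the paper shows $Z_\al(T_2)$ avoids all of $P_\al(T_1)$ and invokes maximality of $E_\al(T_1)$ as a white component of $G_\al(T_1)$.
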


\begin{proof}
If $T_1$ is part of a finite white $\al$-connected component in $G_\al(T_2)$, or vice versa, then $\PP_\al(T_1)$ and $\PP_\al(T_2)$ are disjoint by construction. Therefore we only need to consider the case when $T_1 \subset E_\al(T_2)$ and $T_2 \subset E_\al(T_1)$. 

To leverage the result of Lemma \ref{lem:voronoi}, we want to show (1) $Z_\al(T_2) \subset E_\al(T_1)$ and (2) $Z_\al(T_1) \subset E_\al(T_2)$. For (1), let $H$ denote the colored, induced subgraph of $G_\al(T_1)$ that has the vertex set $Z_\al(T_2)$. We first show that all the vertices in $H$ are white. This will imply that $H$ is connected, and consequently $Z_\al(T_2) \subset E_\al(T_1)$. 

By definition, every point in $P_\al(T_1)$ 
has distance 1 to $T_1$ in the $d_\al$ metric. Therefore every point in $P_\al(T_1)$ is within distance 1 of $E_\al(T_2)$ by the assumption that $T_1 \subset E_\al(T_2)$. The digital Jordan curve theorem (Theorem \ref{thm:jct}) implies that there are no points in $H$ that are within distance 1 of $E_\al(T_2)$. Therefore $H \cap P_\al(T_1) = \emptyset$, which shows all the vertices in $H$ are white. 

Observe that the vertex set $Z_\al(T_2)$ is $\al$-connected by the digital Jordan curve theorem. Hence $H$ is a white $\al$-connected subgraph of $G_\al(T_1)$. Since $H$ contains $T_2$, the assumption $T_2 \subset E_\al(T_1)$ guarantees that $H$ contains a nonempty subset of $E_\al(T_1)$. Since $E_\al(T_1)$ is a maximal infinite white $\al$-connected component, connectivity of $H$ then implies that $H \subset E_\al(T_1)$, and hence $Z_\al(T_2) \subset E_\al(T_1)$. 

By symmetry, the proof of (2) is identical. Now, if $x \in \PP_\al(T_1) \subset V_\al(T_1)$, then Lemma \ref{lem:voronoi} implies $d_\al(x, Z_\al(T_1)) < d_\al(x, E_\al(T_1))$. By the fact (1) that we just proved, $d_\al(x, E_\al(T_1)) \leq d_\al(x, Z_\al(T_2))$, so combining these inequalities shows $d_\al(x, Z_\al(T_1)) < d_\al(x, Z_\al(T_2))$. Similarly, if $y \in \PP_\al(T_2) \subset V_\al(T_2)$, then Lemma \ref{lem:voronoi} and (2) imply that $d_\al(y, Z_\al(T_2)) < d_\al(y, E_\al(T_2)) \leq d_\al(y, Z_\al(T_1))$. Hence $x \neq y$, so $\PP_\al(T_1) \cap \PP_\al(T_2) = \emptyset$.
\end{proof}

\section{TWB and MGCS bounds}
\label{sec:bounds}

We now have all of the ingredients to prove our main result (Theorem \ref{thm:TWB}), and we first give an overview of the proof. Since the white $\al$-connected components in an image $I$ can be covered by a disjoint union of polygonal tiles (Lemma \ref{lem:disjoint}), upper bounding $\rho(I)$ reduces to upper bounding the white pixel density of the individual tiles in the covering. Each polygonal tile contains a single white $\al$-connected component of a known size, so upper bounding its density reduces to lower bounding its area. Since the exterior boundary of each polygonal tile is a simple lattice polygon (Lemma \ref{lem:simple}), we bound the area using Pick's theorem (Theorem \ref{thm:Picks}). Applying the lower bound on the size of the exterior site perimeter (Lemma \ref{lem:minpext}) finishes the argument. Recall that the functions $\sigma_\al(j)$, which have a simple closed form, denote the minimum site perimeter size of any $\al$-connected component of size $j$.

\begin{definition}
Define $f_\al: \R_{\geq 1} \to \big[\frac{2}{\al},1\big)$, $\displaystyle f_\al(j) := \frac{j}{j+\frac{\sigma_\al(j)}{2}-1}$.
\end{definition}

\begin{theorem}
\label{thm:TWB} For any finite set $S$ of $\al$-connected components,
    \begin{equation*}
        \Phi_\al(S) \leq \max_{T \in S}~ \rho(\PP_\al(T)) \leq \max_{\substack{j = |T|\\T \in S}}~ f_\al(j).
    \end{equation*}
\end{theorem}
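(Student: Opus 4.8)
The plan is to prove the two inequalities in Theorem \ref{thm:TWB} in sequence. The first inequality, $\Phi_\al(S) \leq \max_{T \in S} \rho(\PP_\al(T))$, reduces the TWB problem to a statement about individual tiles; the second, $\max_{T \in S} \rho(\PP_\al(T)) \leq \max_{j=|T|, T \in S} f_\al(j)$, is a per-tile density computation via Pick's theorem and the exterior-site-perimeter bound.

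For the first inequality, I would fix an infinite image $I$ with $\CC_\al(I) \subseteq S$ and show $\rho(I) \leq \max_{T \in S} \rho(\PP_\al(T))$. The key structural input is Lemma \ref{lem:disjoint}: the polygonal tiles $\{\PP_\al(T)\}$ over the white $\al$-connected components $T$ of $I$ are pairwise disjoint. I would argue that these tiles, together with the leftover region, partition $\R^2$, and that every white pixel of $I$ lies in some tile (each white component $T$ sits inside its own $V_\al(T) \supseteq \PP_\al(T)$, and the construction guarantees $T \subset \PP_\al(T)$ since the removed nested regions $V_\al(\tau)$ only carve out parts of $Z_\al(T) \setminus T$, not $T$ itself). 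Because $\CC_\al(I) \subseteq S$, each tile appearing is a translate of some $\PP_\al(T)$ with $T \in S$. The density $\rho(I)$ defined as a $\liminf$ over growing boxes is then an average of the tile densities, each bounded by $\max_{T \in S} \rho(\PP_\al(T))$; a standard averaging/covering argument over $I_{j \times j}$ (handling boundary tiles, whose relative contribution vanishes as $j \to \infty$) yields the bound. This is the step I expect to be the \emph{main obstacle}: making the averaging rigorous requires that the tiles tile $\R^2$ without gaps or overlaps and that each white pixel is attributed to exactly one tile, so I would need to confirm that $\bigcup_{T} \PP_\al(T)$ covers all white pixels and that black pixels in the uncovered region do not inflate the density estimate in the wrong direction.

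For the second inequality, I would bound $\rho(\PP_\al(T))$ for a single component $T$ with $|T| = j$. By Lemma \ref{lem:simple}, $P_\al^{ext}(T)$ is a simple lattice polygon, so I would apply Pick's theorem (Theorem \ref{thm:Picks}) to the region $V_\al(T)$ whose boundary is $P_\al^{ext}(T)$: its area equals $i + \frac{b}{2} - 1$, where $b = |P_\al^{ext}(T)|$ is the number of boundary lattice points and $i = |Z_\al(T)|$ is the number of interior lattice points. Using Lemma \ref{lem:minpext}, $b = |P_\al^{ext}(T)| \geq \sigma_\al(j)$. The tile $\PP_\al(T)$ has area at most $\mathrm{area}(V_\al(T))$, and contains exactly $j$ white pixels (the component $T$; the nested regions removed to form $\PP_\al(T)$ account for the other interior lattice points, so $\PP_\al(T)$ is white exactly on $T$). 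Therefore
\begin{equation*}
    \rho(\PP_\al(T)) = \frac{j}{\mathrm{area}(\PP_\al(T))} \leq \frac{j}{j + \frac{b}{2} - 1} \leq \frac{j}{j + \frac{\sigma_\al(j)}{2} - 1} = f_\al(j),
\end{equation*}
where the middle step uses $i \geq j$ together with $\mathrm{area}(V_\al(T)) = i + \frac{b}{2} - 1 \geq j + \frac{\sigma_\al(j)}{2} - 1$, and the monotonicity of $t \mapsto j/(j + t)$ lets me substitute the lower bound on $b$. Taking the maximum over $T \in S$ gives the claim.

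The delicate points to verify carefully are: first, that the number of white pixels in $\PP_\al(T)$ is exactly $j = |T|$ and not larger — this follows because $\PP_\al(T)$ is colored by $\mathds{1}_{P_\al(T)}$ and the only white lattice points of $V_\al(T)$ lying in $Z_\al(T)$ that are not in $T$ get removed when we excise the nested Voronoi regions $V_\al(\tau)$ for $\tau \in \mc{M}$; second, that $\mathrm{area}(\PP_\al(T)) \leq \mathrm{area}(V_\al(T))$, which is immediate since $\PP_\al(T) \subseteq V_\al(T)$, but I must confirm that removing the nested tiles removes equal amounts of white pixels and area so the ratio bound is preserved (in fact removing interior regions can only \emph{decrease} the area while also removing white pixels, so the worst case for density is the bound above using $i \geq j$ directly on $V_\al(T)$). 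I would state the density computation in terms of $V_\al(T)$ to sidestep tracking the excised regions, which is the cleanest route to $f_\al(j)$.
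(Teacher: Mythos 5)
Your treatment of the first inequality matches the paper's proof: pairwise disjointness of the tiles (Lemma \ref{lem:disjoint}), the fact that every white pixel lies in the tile of its own component, and a limiting/averaging argument over growing boxes in which partially covered boundary tiles and the all-black uncovered region are harmless. The genuine gap is in the second inequality. What the theorem needs is a \emph{lower} bound on $\text{area}(\PP_\al(T))$, namely $\text{area}(\PP_\al(T)) \geq |T| + \frac{|P_\al^{ext}(T)|}{2} - 1$; what you actually prove is a lower bound on $\text{area}(V_\al(T))$, and the inclusion $\PP_\al(T) \subseteq V_\al(T)$ transfers bounds in the wrong direction: from $\text{area}(\PP_\al(T)) \leq \text{area}(V_\al(T))$ and $\text{area}(V_\al(T)) \geq j + \frac{b}{2} - 1$ you cannot conclude $\rho(\PP_\al(T)) = j/\text{area}(\PP_\al(T)) \leq j/(j + \frac{b}{2} - 1)$. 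Your proposed repair is also incorrect: excising the regions $V_\al(\tau)$, $\tau \in \mc{M}$, does \emph{not} ``remove white pixels'' from the density computation---the numerator of $\rho(\PP_\al(T))$ is already exactly $j = |T|$, since the nested white points were never counted---so excision shrinks the denominator while leaving the numerator fixed, and therefore can only \emph{increase} the density. Consequently, for any $T \in S$ whose cavity contains nested white components (perfectly admissible, since $S$ is an arbitrary finite set of components), your argument establishes nothing about $\rho(\PP_\al(T))$.

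The missing step is precisely the one the paper supplies when it applies Pick's theorem ``without including the integer points $Z_\al(T)\setminus T$'': the excised area must be charged against the interior lattice points that are dropped when $i = |Z_\al(T)|$ is replaced by $|T|$ in Pick's formula. Each excised region is itself a lattice polygon, so by Pick's theorem its area is less than the number of lattice points in its closure, and (up to bookkeeping for points shared between excised regions or lying on $P_\al^{ext}(T)$) those points lie in $Z_\al(T)\setminus T$. Summing gives $\text{area}\big(\bigcup_{\tau \in \mc{M}} V_\al(\tau)\big) \leq |Z_\al(T)\setminus T|$, whence $\text{area}(\PP_\al(T)) \geq |Z_\al(T)| + \frac{b}{2} - 1 - |Z_\al(T)\setminus T| = |T| + \frac{b}{2} - 1$; only then do Lemma \ref{lem:minpext} and the monotonicity of $t \mapsto j/(j+t)$ yield $\rho(\PP_\al(T)) \leq f_\al(j)$. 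Without this compensation argument, your bound is established only for tiles with $\mc{M} = \emptyset$, i.e., when $\PP_\al(T) = V_\al(T)$.
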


\begin{proof}
For the first inequality, $\rho(I)$ is clearly upper bounded by the density of $I$ restricted to a union of polygonal tiles covering each white $\al$-connected component in $I$. These tiles are disjoint by Lemma \ref{lem:disjoint}. By a standard limit argument, it follows that the density of this union is a weighted average of the densities of the individual tiles. Therefore $\rho(I)$ is upper bounded by the maximum white pixel density of $\PP_\al(T)$ for some $T \in S$. 

For the second inequality, we want to obtain a sharp upper bound on the density of an arbitrary polygonal tile $\PP_\al(T)$. The density is given by $\rho(\PP_\al(T)) = \frac{|T|}{\text{area}(\PP_\al(T))}$, so it suffices to bound $\text{area}(\PP_\al(T))$. By Pick's theorem and Lemma \ref{lem:minpext}, if $|T|=j$,
\begin{equation}
\label{eq:areaP}
    \text{area}(\PP_\al(T)) \geq |T| + \frac{|P_\al^{ext}(T)|}{2}-1 \geq j + \frac{\sigma_\al(j)}{2}-1.
\end{equation}
For the first inequality in \eqref{eq:areaP}, we apply Pick's theorem without including the integer points $Z_\al(T) \setminus T$, which lie in the interior of $V_\al(T)$.
\end{proof}

It is natural to ask when the two inequalities in Theorem \ref{thm:TWB} are tight. The first inequality is tight if and only if the maximally dense polygonal tile(s) in $\PP_\al(S) := \{\PP_\al(T): T \in S\}$ can tile $\R^2$ by integer translations. If $\PP_\al(T)$ is one such maximally dense polygonal tile, it is easy to see that the second inequality is tight if and only if $P_\al^{int}(T) = \emptyset$.

For any finite component set $S$, Theorem \ref{thm:TWB} affords an easily computable upper bound on $\Phi_\al(S)$. Recalling that $S_\al(k)$ is the set of (canonical representatives of) all $\al$-connected components of size at most $k$, here we are most interested in the value of $\Phi_\al(k) := \Phi_\al(S_\al(k))$. Plugging this set into Theorem \ref{thm:TWB} shows that $\Phi_\al(k) \leq F_\al(k) := \max_{1 \leq j \leq k} f_\al(k)$, and inverting this result affords a lower bound on $C_\al(d)$. We record this straightforward calculation in the following corollary.

\begin{corollary}\label{cor:bounds}
~

\vspace{-3mm}
\noindent
$S_\al(k)$-TWB upper bounds $($for all $k \in \N)$$:$
\begin{align*}
    \Phi_8(k) &\leq F_8(k) \leq \frac{k}{\big(\sqrt{k}+1\big)^2},\\
    \Phi_4(k) &\leq F_4(k) \leq \frac{k}{k+\sqrt{2k-1}}.
\end{align*}
MGCS lower bounds $($for all $d \in (0,1])$$:$
\begin{align*}
    \C_8(d) &\geq F_8^{-1}(d) \geq \bigg\lceil\frac{d}{\big(1-\sqrt{d}\big)^2}\bigg\rceil,\\
    \C_4(d) &\geq F_4^{-1}(d) \geq  \begin{cases}1, & d \leq \frac{1}{2}\\
    \Big\lceil \frac{d^2 + d \sqrt{2d-1}}{(1-d)^2}\Big\rceil, & d > \frac{1}{2}
    \end{cases}~.
\end{align*}
\end{corollary}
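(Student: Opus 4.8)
The first inequality $\Phi_\al(k) \leq F_\al(k)$ is immediate from Theorem \ref{thm:TWB}: since $S_\al(k)$ contains a component of every size $j \in \{1,\dots,k\}$, the maximum $\max_{j=|T|,\,T\in S_\al(k)} f_\al(j)$ equals $\max_{1\leq j\leq k} f_\al(j) = F_\al(k)$. The substantive content of the corollary is therefore the closed-form estimate, and my plan is to discard the ceilings appearing in $\sigma_\al$. Since $\lceil x\rceil \geq x$, the definitions give $f_8(j) \leq g_8(j) := \frac{j}{(\sqrt{j}+1)^2}$ and $f_4(j) \leq g_4(j) := \frac{j}{j+\sqrt{2j-1}}$, so it suffices to bound $\max_{1\leq j\leq k} g_\al(j)$.

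The key point is that $g_\al$ is nondecreasing on the positive integers, which collapses this maximum to its value at $j=k$. For $\al=8$ the substitution $u=\sqrt{j}$ writes $g_8 = (u/(u+1))^2$, manifestly increasing in $u\geq 0$. For $\al=4$ I would write $g_4(j) = 1/\big(1+\sqrt{2/j-1/j^2}\big)$ and note that the radicand $2/j-1/j^2$ peaks at $j=1$; thus $g_4$ is increasing only on $[1,\infty)$, but this is all that is needed since the domain is $j\in\N$. In either case $F_\al(k) \leq \max_{1\leq j\leq k} g_\al(j) = g_\al(k)$, which is exactly the stated upper bound.

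For the MGCS bounds, $\C_\al(d) \geq F_\al^{-1}(d)$ follows from Lemma \ref{lem:inverses}(1) combined with $\Phi_\al \leq F_\al$: the latter gives the containment $\{k : \Phi_\al(k)\geq d\} \subseteq \{k : F_\al(k)\geq d\}$, and since the minimum over a smaller set is larger, $\C_\al(d)\geq\min\{k:\Phi_\al(k)\geq d\}\geq F_\al^{-1}(d)$. For the explicit estimate I again use $F_\al \leq g_\al$ to obtain $F_\al^{-1}(d) \geq \min\{k\in\N : g_\al(k)\geq d\}$, then invert $g_\al$. For $\al=8$, solving $g_8(k)=d$ via $u=\sqrt{k}$ gives $u/(u+1)=\sqrt{d}$, hence $k = d/(1-\sqrt{d})^2$, and monotonicity of $g_8$ makes the least admissible integer $\lceil d/(1-\sqrt{d})^2\rceil$. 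For $\al=4$, squaring $k(1-d)=d\sqrt{2k-1}$ produces $(1-d)^2 k^2 - 2d^2 k + d^2 = 0$ with roots $\frac{d^2 \pm d\sqrt{2d-1}}{(1-d)^2}$, using $d^2-(1-d)^2 = 2d-1$.

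The one genuine subtlety---and the step I would be most careful about---is selecting the correct root for $\al=4$. Squaring introduces a spurious smaller root lying in $(\tfrac12,1)$, exactly the interval where $g_4$ is decreasing, so I must invoke $g_4(1)=\tfrac12$ and monotonicity of $g_4$ on $[1,\infty)$ to conclude that for integer $k\geq 1$ the relevant threshold is the larger root $k_+ = \frac{d^2+d\sqrt{2d-1}}{(1-d)^2}$. This reasoning is valid precisely when $d>\tfrac12$; for $d\leq\tfrac12$ the bound degenerates to $\C_4(d)\geq 1$, which holds trivially since any image of positive density contains a white pixel. Taking $\lceil k_+\rceil$ then recovers the piecewise formula, and everything else is routine quadratic algebra.
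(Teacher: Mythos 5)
Your proposal is correct and follows essentially the same route as the paper's proof: plug $S_\al(k)$ into Theorem \ref{thm:TWB}, drop the ceilings in $\sigma_\al$ to get the closed-form bounds, apply Lemma \ref{lem:inverses} for the MGCS direction, and invert the resulting expressions by solving $g_8(k)=d$ and the quadratic for $g_4(k)=d$. The only difference is one of detail, not substance---you spell out the monotonicity of $g_\al$ and the selection of the larger quadratic root, steps the paper compresses into ``basic simplification'' and ``explicitly solving the equation.''
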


\begin{proof}[Proof]

Plugging $S_\al(k)$ into Theorem \ref{thm:TWB} gives an upper bound on the solution to the $S_\al(k)$-TWB problem:
 \begin{equation}
 \label{eq:FG}
    \Phi_\al(k) \leq F(k) := \max_{1 \leq j \leq k} f_\al(k) \leq \begin{cases}
       \frac{k}{(\sqrt{k}+1)^2}, & \al = 8\\
       \frac{k}{k+\sqrt{2k-1}}, & \al = 4
    \end{cases}
\end{equation}
\noindent We then apply Lemma \ref{lem:inverses} to obtain
\begin{align*}
    \C_\al(d) \geq \min \{k \in \N:~\Phi_\al(k) \geq d\} &\geq F_\al^{-1}(d) := \min \{k \in \N:~F_\al(k) \geq d\}.
\end{align*}

Basic simplification shows that $F_\al^{-1}(d) = \min\big\{k \in \N:~ f_\al(k) \geq d\big\}$. Explicitly solving the equation $\frac{k}{(\sqrt{k}+1)^2} = d$ for $k \in \R_{\geq 1}$ gives rise to the simple lower bound on $F_8^{-1}(d)$ given in the corollary. Similarly, we obtain the simple lower bound on $F_4^{-1}(d)$ by solving $\frac{k}{k+\sqrt{2k-1}} = d$ for $k \in \R_{\geq 1}$. A Laurent expansion of the resulting functions (excluding the ceiling operator) around $d = 1$ yields the weaker lower bounds given in Corollary \ref{cor:MGCSsol}.
\end{proof}

\section{Constructions}
\label{sec:constructions}

\subsection{Infinite images}

To show that the bounds in Corollary \ref{cor:bounds} are tight, we generate (periodic) infinite images with white pixel density $F_\al(k)$ that only contain white $\al$-connected components of size at most $k$. Here, the key observation is that the spiral components $\mc{Q}_k$ and $\mc{R}_k$ (defined in Figure \ref{fig:QR}) always yield maximally dense polygonal tiles.

We say an open region $\mc{F} \subset \R^2$ (with closure $\overline{\mc{F}}$) tiles the plane with respect to a lattice $\mc{L}$ if the following two conditions hold:
\begin{enumerate}
    \item $\displaystyle \bigcup_{y \in \mc{L}} (\overline{\mc{F}}+y) = \R^2,$
    \item $\displaystyle \bigcap_{y \in \mc{L}} (\mc{F}+y) = \emptyset.$
\end{enumerate}

The 8-connected case is particularly simple. For all $k \in \N$, $\PP_8(\mc{R}_k)$ tiles the plane by translation with respect to a sublattice of $\Z^2$ (see Lemma \ref{lem:P8tiling} for a precise description of this tiling). This tiling is illustrated in Figure \ref{fig:tiling}(a). It is easy to verify that the density bound $F_8(k)$ is achieved (Corollary \ref{cor:F8achieved}). 

\begin{lemma}
\label{lem:P8tiling}
For each $k \in \N_{\geq 2}$, $\PP_8(\mc{R}_k)$ tiles the plane with respect to the lattice generated by $M^iB_k$.

\noindent Here, $M = \begin{pmatrix}
0 & 1 \\
-1 & 0
\end{pmatrix}$ represents a $\frac{\pi}{2}$ clockwise rotation, and $B_i = \begin{pmatrix}
2n+\lfloor\frac{i+1}{2}\rfloor & j+1 \\
-1 & 2n+\lfloor\frac{i}{2}\rfloor
\end{pmatrix}$.

\noindent The indices $i, j, n$, where $i \in \{0,1,2,3\}$ and $j,n \in \N$, are implicitly defined as follows: $k = t_i(n)+j$ where $j \in [t_{i+1}(n)-t_i(n)]$, and $t_4(n) := t_0(n+1)$. 
\end{lemma}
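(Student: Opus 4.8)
The plan is to establish that $\PP_8(\mathcal{R}_k)$ tiles the plane with respect to the claimed lattice by appealing to Theorem \ref{thm:exacttile} (translational monotiling): it suffices to show that $\overline{\PP_8(\mathcal{R}_k)}$ can be completely surrounded by translated copies of itself, since the boundary of a polygonal tile is piecewise linear (hence trivially piecewise $C^2$ with finitely many inflection points) and the tile is homeomorphic to a closed disc by Lemma \ref{lem:simple}. So the main work is the explicit geometric verification that the eight (or fewer) translates by the claimed lattice vectors fit around a central copy without overlap and without gaps. Before doing this, I would pin down the exact shape of $\PP_8(\mathcal{R}_k)$: since $\mathcal{R}_k$ is the minimal-site-perimeter (square spiral) component, Lemma \ref{lem:minpext} guarantees $P_8^{int}(\mathcal{R}_k) = \emptyset$, so the tile coincides with $V_8(\mathcal{R}_k)$, the open polygonal region bounded by $P_8^{ext}(\mathcal{R}_k)$. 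I would describe this region explicitly as an ``aspect-ratio-near-square'' octagonal (corner-clipped rectangle) region whose shape is governed by the parameters $i,j,n$ in the statement, using the $t_i(n)$ values from Lemma \ref{lem:maxarea8} to track how the spiral's bounding rectangle grows.

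The key steps, in order, would be as follows. First, I would set up coordinates so that $\mathcal{R}_k$ sits in its canonical position and compute the vertices of the polygon $P_8^{ext}(\mathcal{R}_k)$ directly from the spiral construction; the spiral fills out a rectangle of dimensions determined by $n$ and then partially fills the next layer, indexed by $i$ (which side we are on) and $j$ (how far along that side). This yields a tile that is a rectangle with one or two clipped corners, and I would record its side lengths in terms of $2n+\lfloor\frac{i+1}{2}\rfloor$ and $2n+\lfloor\frac{i}{2}\rfloor$, matching the diagonal entries of $B_i$. Second, I would verify that translating this tile by the two columns of $B_i$ produces copies that abut the central tile edge-to-edge: the clipped corner of one translate nests exactly into the complementary protrusion of its neighbor, which is precisely the mechanism by which spiral tiles interlock. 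Third, I would confirm the lattice is generated correctly for all four values of $i$ by using the rotation matrix $M$ to cycle between the four orientations of the partially-filled layer, so that $M^i B_k$ gives a single consistent lattice across the residues of $k$ modulo the four spiral phases.

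The step I expect to be the main obstacle is the clean bookkeeping of the interlocking corners across the four cases $i \in \{0,1,2,3\}$. The difficulty is that the ``off-by-one'' growth of the spiral (captured by the floor functions $\lfloor\frac{i+1}{2}\rfloor$ and $\lfloor\frac{i}{2}\rfloor$ and by the $j+1$ entry of $B_i$) must be shown to exactly match the geometry of the clipped corner, so that neighboring translates leave no gap and no overlap. Rather than grind through all four cases by hand, I would reduce the casework by exploiting the $\frac{\pi}{2}$ rotational symmetry: establishing the surrounding property for a single representative phase and then transporting it via $M$. I would also lean on Theorem \ref{thm:exacttile} to avoid verifying the full global tiling directly---it is enough to exhibit the local surrounding configuration, after which the theorem upgrades this to a genuine lattice tiling. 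A secondary check is to confirm that the determinant of $B_i$ equals the area of $\PP_8(\mathcal{R}_k)$ (which by Pick's theorem and Lemma \ref{lem:minpext} equals $k + \tfrac{\sigma_8(k)}{2} - 1$); matching $|\det B_i|$ to this area serves as a consistency guarantee that the fundamental domain has the right size and that the tiling is exact.
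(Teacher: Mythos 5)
Your proposal is correct and follows essentially the same route as the paper: the paper's proof likewise reduces the claim to the Girault--Beauquier--Nivat monotiling criterion (Theorem \ref{thm:exacttile}) and then verifies by a finite computation, in each of the four cases $i \in \{0,1,2,3\}$, that the translates by the stated lattice vectors surround the tile. Your added determinant-equals-area consistency check is not in the proof itself but matches the computation $\det(B_i) = k + \lceil 2\sqrt{k}\rceil + 1$ that the paper uses immediately afterward in Corollary \ref{cor:F8achieved}.
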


\begin{proof}
 Using the monotiling result of Girault-Beauquier and Nivat (\cite{GBN}; see Theorem \ref{thm:exacttile}), it is a finite computation to verify that the given tiling is correct in each of the four cases ($i \in \{0,1,2,3\}$). We illustrate the $i = 0$ case in Fig. \ref{fig:tiling} (a).
\end{proof}

\begin{corollary} \label{cor:F8achieved} For all $k \in \N$, $\Phi_8(k) = F_8(k).$
\end{corollary}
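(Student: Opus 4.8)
The plan is to combine the general upper bound from Corollary \ref{cor:bounds} with a matching lower bound coming from the explicit tiling in Lemma \ref{lem:P8tiling}. Corollary \ref{cor:bounds} already gives $\Phi_8(k) \leq F_8(k)$, so the entire content of the corollary is establishing the reverse inequality $\Phi_8(k) \geq F_8(k)$. Recall that $F_8(k) = \max_{1 \leq j \leq k} f_8(j)$, and let $j^*$ be an index $1 \leq j^* \leq k$ at which this maximum is attained, so that $F_8(k) = f_8(j^*)$. Since any $8$-connected component of size $j^* \leq k$ is an admissible component for the $S_8(k)$-TWB problem, it suffices to exhibit a single infinite image whose white $8$-connected components all have size at most $k$ and whose white pixel density equals $f_8(j^*)$.

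First I would invoke Lemma \ref{lem:P8tiling}: for the component $\mc{R}_{j^*}$, the polygonal tile $\PP_8(\mc{R}_{j^*})$ tiles the plane with respect to an explicit sublattice of $\Z^2$. Translating $\mc{R}_{j^*}$ by this lattice produces an infinite image $I$ in which every copy of $\mc{R}_{j^*}$ is surrounded by its site perimeter $P_8(\mc{R}_{j^*})$, so no two white copies merge; hence every white $8$-connected component of $I$ is a translate of $\mc{R}_{j^*}$ and has size exactly $j^* \leq k$. This shows $\CC_8(I) \subseteq S_8(k)$. Next I would compute $\rho(I)$. Because the tiling is periodic with respect to a lattice, the density is simply the white area of one fundamental domain divided by its total area; the fundamental domain is (a copy of) $\PP_8(\mc{R}_{j^*})$, which contains exactly $j^*$ white pixels. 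The key point is that $\mc{R}_{j^*}$ is isoperimetrically optimal and has empty interior site perimeter, so by Lemma \ref{lem:minperim8} we have $|P_8^{ext}(\mc{R}_{j^*})| = |P_8(\mc{R}_{j^*})| = \sigma_8(j^*)$, and then the Pick's-theorem computation of equation \eqref{eq:areaP} holds with equality: $\text{area}(\PP_8(\mc{R}_{j^*})) = j^* + \frac{\sigma_8(j^*)}{2} - 1$. Therefore $\rho(\PP_8(\mc{R}_{j^*})) = f_8(j^*) = F_8(k)$.

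Combining these, $\rho(I) = f_8(j^*) = F_8(k)$, which gives $\Phi_8(k) \geq F_8(k)$, and together with the upper bound this proves $\Phi_8(k) = F_8(k)$. A small bookkeeping item is the hypothesis $k \in \N_{\geq 2}$ in Lemma \ref{lem:P8tiling}: the maximizing index $j^*$ could in principle equal $1$, and one also must separately confirm the case $k = 1$. I would handle this by checking directly that $\PP_8(\mc{R}_1)$ (a single white pixel surrounded by its eight-neighbor perimeter, i.e. a $3 \times 3$ block with one white center) tiles the plane, giving $f_8(1) = \frac{1}{9} = F_8(1)$, so the base case is immediate. The main obstacle I anticipate is not conceptual but verificational: one must confirm that the lattice translates of $\PP_8(\mc{R}_{j^*})$ genuinely partition $\R^2$ and that the resulting integer coloring really does isolate each white component behind a full black perimeter. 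That verification is exactly what Lemma \ref{lem:P8tiling} supplies (via the monotiling criterion of Theorem \ref{thm:exacttile}), so the corollary reduces to reading off the density of a periodic tiling whose existence has already been established.
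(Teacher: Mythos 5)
Your main argument follows the same route as the paper: the upper bound is Corollary \ref{cor:bounds}, and the lower bound comes from the Lemma \ref{lem:P8tiling} lattice tiling by $\PP_8(\mc{R}_{j^*})$ at a maximizing index $j^*$, whose density is $f_8(j^*) = F_8(k)$. The only real difference is how you compute $\text{area}(\PP_8(\mc{R}_{j^*}))$: you use Pick's theorem together with $|P_8^{ext}(\mc{R}_{j^*})| = |P_8(\mc{R}_{j^*})| = \sigma_8(j^*)$ (facts which do hold for the spiral, via the bijection noted in the proof of Lemma \ref{lem:minperim8}), whereas the paper simply reads the area off as $\det(B_i) = k + \lceil 2\sqrt{k}\rceil + 1$, the covolume of the tiling lattice. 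Both computations are valid and give the same number.

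However, your treatment of the base case contains a genuine error. The polygonal tile $\PP_8(\mc{R}_1)$ is \emph{not} the $3\times 3$ block of nine cells; it is the open region $V_8(\mc{R}_1)$ bounded by the simple cycle $P_8^{ext}(\mc{R}_1)$, i.e.\ a $2\times 2$ square of area $4$ centered at the white pixel (the eight perimeter lattice points lie \emph{on} its boundary and are shared with neighboring tiles --- this sharing is the entire point of the polygonal tile construction). Correspondingly $f_8(1) = \frac{1}{1 + \sigma_8(1)/2 - 1} = \frac{1}{4}$, not $\frac{1}{9}$; note that your own Pick computation, applied at $j^* = 1$ with $\sigma_8(1) = 8$, gives area $1 + 4 - 1 = 4$ and thus contradicts the value $\frac{1}{9}$ you state. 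The correct base case is: the $2\times 2$ tile tiles the plane, placing white pixels on the sublattice $2\Z^2$ where they are $8$-isolated, so $\Phi_8(1) = \frac{1}{4} = F_8(1)$. This slip does not propagate into the main case, since for $k \geq 2$ the maximizing index satisfies $j^* \geq 2$ (because $f_8(2) = \frac{1}{3} > \frac{1}{4} = f_8(1)$), but as written your $k=1$ argument asserts equality with a wrong value of $F_8(1)$ and must be corrected.
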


\begin{proof}
    The case $k=1$ is immediate, so we assume $k \geq 2$. In Lemma \ref{lem:P8tiling}, note that the area of $\PP_8(\mc{R}_k)$ is $\det(B_i) = k + \lceil2\sqrt{k} \rceil+1$. As a consequence, there always exists a $j \in [k]$ such that the white pixel density of tiling the plane by $\PP_8(\mc{R}_j)$ (via the Lemma \ref{lem:P8tiling} construction) corresponds with the density upper bound $F_8(k) = \max_{1 \leq j \leq k} ~\frac{j}{j+\lceil 2\sqrt{j}\rceil + 1}$. 
\end{proof}

The 4-connected case requires a bit more work. The diamond spiral $\mc{Q}_k$ has four distinct ``sides,'' and each side corresponds with a subfamily of connected components that all have similar polygonal tiles (see Fig. \ref{fig:QR}(a)). Along two of the sides, we find lattice tilings of the plane using $\PP_4(\mc{Q}_k)$ (Lemma \ref{lem:tiling4}, parts (1) and (2)). These tilings are illustrated in Figure \ref{fig:tiling}(b) and (c). We then calculate the values of $k$ where these tilings achieve density $F_4(k)$ (see Lemma \ref{lem:TWBsols}). 

In contrast, for values of $k$ corresponding with the other two sides of $\mc{Q}_k$, there exist instances of the $S_4(k)$-TWB problem where $\PP_4(\mc{Q}_{s_1(n)})$ or $\PP_4(\mc{Q}_{s_3(n)})$ are the uniquely densest polygonal tiles in $\PP_4(S_4(k))$. Recalling Remark \ref{rem:Q=sieben}, $\mc{Q}_{s_1(n)}$ and $\mc{Q}_{s_3(n)}$ are special cases of the component family $\mc{D}_{\ell,n}$ (see Definition \ref{def:ABD}). However, we find that $\PP_4(\mc{D}_{\ell,n})$ cannot tile the plane. We will use this result to show that $\Phi_4(k) \neq F_4(k)$ for these special instances of the $S_4(k)$-TWB problem. 

\begin{lemma} \label{lem:tiling4}
For all $n \in \N \text{:}$\\
\vspace{-0.5cm}
\begin{enumerate}
    \item If $k = s_1(n)+j$, where $j \in [n]$, then $\PP_4(\mc{Q}_k)$ tiles the plane with respect to the lattice generated by $\mc{V}_k := \begin{pmatrix}
    n & n+j\\
    -n-1 &n-j
\end{pmatrix}$.
    \item If $k = s_3(n-1)+j$, where $j \in [n]$, then $\PP_4(\mc{Q}_k) \cup \{-\PP_4(\mc{Q}_k)+(n,n)\}$ tiles the plane with respect to the lattice generated by $\mc{W}_k  := \begin{pmatrix}
    n-1 & 2n-1+j\\
    -n-1 &2n-1-j
\end{pmatrix}$.
    \item The region $\mathscr{P}(\mc{D}_{\ell,n})$ does not tile the plane.
\end{enumerate}
\end{lemma}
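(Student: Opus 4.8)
The three parts share a common reduction. Each component in play---the diamond-spiral components $\mc{Q}_k$ and Sieben's diamonds $\mc{D}_{\ell,n}$---is isoperimetrically optimal, so its interior site perimeter is empty. Indeed $\sigma_4(k) \le |P_4^{ext}(\mc{Q}_k)| \le |P_4(\mc{Q}_k)| = \sigma_4(k)$ forces $P_4^{int}(\mc{Q}_k)=\emptyset$, whence the set $\mc{M}$ in Definition \ref{def:pt} is empty and $\PP_4(\mc{Q}_k)=V_4(\mc{Q}_k)$; the same holds for $\mc{D}_{\ell,n}$. By Lemma \ref{lem:simple} this is a simple lattice polygon, in particular a topological disc with piecewise-linear (hence inflection-free, piecewise-$C^2$) boundary, so Theorem \ref{thm:exacttile} applies and the whole lemma becomes a collection of \emph{surroundability} statements, exactly as in Lemma \ref{lem:P8tiling}.

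For parts (1) and (2) I would first confirm the arithmetic that a genuine lattice tiling forces, namely prototile area $=$ covolume. Using Pick's theorem in the form \eqref{eq:areaP} together with the site-perimeter values on the two relevant intervals---$\sigma_4(s_1(n)+j)=4n+2$ and $\sigma_4(s_3(n-1)+j)=4n$ for $j\in[n]$, both read off from $\sigma_4(k)=\lceil 2\sqrt{2k-1}\rceil+2$ via Remark \ref{rem:Q=sieben} and Lemma \ref{lem:maxarea}---gives $\mathrm{area}(\PP_4(\mc{Q}_k))=k+2n$ in case (1) and $k+2n-1$ in case (2). A determinant computation then yields $\det\mc{V}_k=2n^2+n+j=k+2n$ and $\det\mc{W}_k=4n^2-2n+2j=2(k+2n-1)$, matching the area of one tile and of the two-tile union respectively. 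It then remains to exhibit the corona: I would verify that the eight translates of the prototile by $\{a v_1+b v_2: a,b\in\{-1,0,1\},\,(a,b)\neq(0,0)\}$ (columns $v_1,v_2$ of $\mc{V}_k$, resp.\ $\mc{W}_k$) meet the central tile only along shared boundary and cover a full neighborhood of $\partial(\PP_4(\mc{Q}_k))$. Because the diamond spiral grows self-similarly along each side, the combinatorial type (ordered edge-vector list) of $\PP_4(\mc{Q}_k)$ is constant as $j$ ranges over $[n]$, so the edge-matching conditions reduce to finitely many identities linear in $n,j$---a single parameter-uniform verification, as in Lemma \ref{lem:P8tiling}. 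Theorem \ref{thm:exacttile} then gives a translation tiling, and the matched covolume pins it to the stated lattice. Part (2) needs one extra preliminary: checking that $\PP_4(\mc{Q}_k)\cup(-\PP_4(\mc{Q}_k)+(n,n))$ is a topological disc, i.e.\ that the point-reflected copy is glued along a common boundary edge---the reflection through $(\tfrac n2,\tfrac n2)$ being exactly what restores the central symmetry a lone $\PP_4(\mc{Q}_k)$ along this side lacks.

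For part (3) the plan is to show the convex polygon $\PP_4(\mc{D}_{\ell,n})=V_4(\mc{D}_{\ell,n})$ is not centrally symmetric, and hence is not a translation tile. Convexity is the crucial structural input: $\mc{D}_{\ell,n}$ is \emph{all} lattice points of a rotated rectangle $\{-n<v<n,\ -n<u<2\ell-n+1\}$ in coordinates $(u,v)=(x_1+x_2,\,x_2-x_1)$, and the $d_4$-Voronoi construction straightens its diagonal staircase sides into genuine edges, yielding a convex polygon (this is precisely the property that fails for the \emph{partial} spirals $\mc{Q}_k$ of parts (1)--(2), which is why those non-symmetric tiles can still tile). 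For the symmetry defect: were $V_4(\mc{D}_{\ell,n})$ centrally symmetric about $c$, then since it is a lattice polygon $c$ is the midpoint of an antipodal vertex pair, so $2c\in\Z^2$; the reflection $x\mapsto 2c-x$ preserves $\Z^2$ and maps $Z_4(\mc{D}_{\ell,n})=\mc{D}_{\ell,n}$ to itself, forcing $\mc{D}_{\ell,n}$ to be centrally symmetric. But the $u$-extent of $\mc{D}_{\ell,n}$ runs over $[-n+1,\,2\ell-n]$ (both ends attained) with endpoint sum $2\ell-2n+1$, while the $v$-extent is symmetric with sum $0$; thus the center's rotated coordinates satisfy $2\bar u+2\bar v=2\ell-2n+1$, which is odd, whereas $2\bar u+2\bar v=4c_2$ must be even---a contradiction. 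So $V_4(\mc{D}_{\ell,n})$ admits no central symmetry. A convex region that tiles the plane by translation is centrally symmetric (a parallelogram or centrally symmetric hexagon); therefore $\PP_4(\mc{D}_{\ell,n})$ does not tile.

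I expect two places to carry the weight. In parts (1)--(2) the main obstacle is making the ``parameter-uniform corona check'' rigorous: one must prove the ordered edge-vector list of $\PP_4(\mc{Q}_k)$ is truly invariant as $j$ ranges over $[n]$ and $n$ grows, so that a bounded computation settles all $k$ at once, with the disc-gluing of part (2) a further delicate but finite step. In part (3) the main obstacle is the implication ``convex $+$ not centrally symmetric $\Rightarrow$ not a translation tile''; converting the symmetry defect into a concrete covering obstruction at the offending edge (the flat versus pointed diagonal ends of the diamond) through Theorem \ref{thm:exacttile} amounts to a corona edge-pairing argument that effectively reproves the classical Minkowski characterization of convex translation tiles, and establishing convexity of $V_4(\mc{D}_{\ell,n})$ itself (via the local contours of Figure \ref{fig:localboundary4}) is the supporting lemma that must be nailed down.
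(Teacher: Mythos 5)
For parts (1) and (2) your approach coincides with the paper's: the proof there is exactly a surroundability verification via Girault-Beauquier--Nivat (Theorem \ref{thm:exacttile}), left as a ``finite computation'' and illustrated by the coronas in Figure \ref{fig:tiling}(b),(c). Your added covolume checks are correct and worthwhile: indeed $\sigma_4(k)=4n+2$ on $[s_1(n)+1,s_2(n)]$ gives $\mathrm{area}(\PP_4(\mc{Q}_k))=k+2n=\det\mc{V}_k=2n^2+n+j$, and $\sigma_4(k)=4n$ on $[s_3(n-1)+1,s_0(n)]$ gives $2(k+2n-1)=\det\mc{W}_k=4n^2-2n+2j$. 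Your reduction $\PP_4(\mc{Q}_k)=V_4(\mc{Q}_k)$ via $|P_4(\mc{Q}_k)|=\sigma_4(k)$ forcing $P_4^{int}=\emptyset$ is also sound (though note that $\mc{M}=\emptyset$ additionally uses that $\mc{Q}_k$ has no holes, so $Z_4(\mc{Q}_k)=\mc{Q}_k$; this is immediate from the spiral's shape but is a separate observation from $P_4^{int}=\emptyset$).

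In part (3), however, there is a genuine gap. Your argument --- convex plus not centrally symmetric, hence not a translation tile by Minkowski's criterion --- only rules out tilings by \emph{translates of a single orientation} of $\PP_4(\mc{D}_{\ell,n})$. But the lemma is invoked in the proof of Lemma \ref{lem:TWBsols} against packings built from \emph{isometric} copies of the tile: there, each copy of $T$ is ``an integer translation of one of its four possible rotations,'' and the extension argument produces a tiling of the plane by congruent copies in mixed orientations. Minkowski's theorem says nothing about such tilings --- there exist convex hexagons with no central symmetry that tile the plane using rotated copies. This is precisely why the paper instead computes the six vertices $\{(0,-n),(-n,0),(\ell-n,\ell),(\ell-n+1,\ell),(\ell,\ell-n+1),(\ell,\ell-n)\}$ and appeals to Reinhardt's classification of the three families of convex hexagons that admit \emph{any} tiling of the plane by congruent copies, checking that $\PP_4(\mc{D}_{\ell,n})$ lies in none of them. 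Your central-symmetry-defect computation is correct as far as it goes (the opposite parallel edges from $(0,-n)$ to $(-n,0)$ and from $(\ell-n+1,\ell)$ to $(\ell,\ell-n+1)$ have unequal lengths $n\sqrt{2}$ and $(n-1)\sqrt{2}$, which already shows non-symmetry without the parity argument), but to make part (3) strong enough for its downstream use you must either verify the hexagon against Reinhardt's three types or otherwise handle rotated copies directly; as written, the conclusion you prove is strictly weaker than the one the paper needs.
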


\begin{figure}[ht]
\centering
\includegraphics[width=0.9\textwidth]{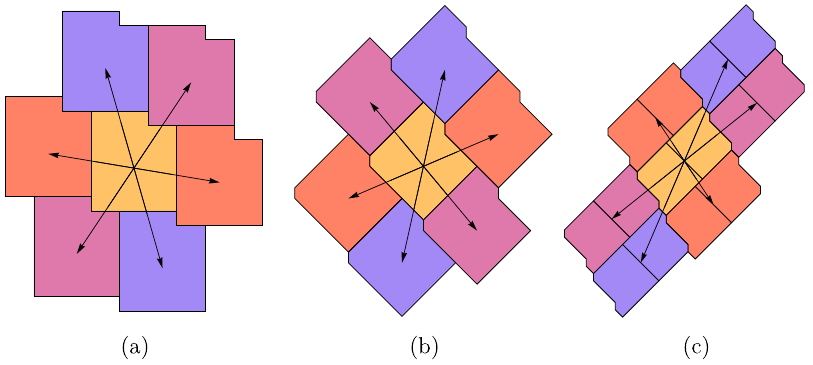}
\caption{The surroundings of each tile in the Lemma \ref{lem:P8tiling} and Lemma \ref{lem:tiling4} lattice tilings, with translation vectors shown.}
\label{fig:tiling}
\end{figure}

\begin{proof}
Using the monotiling result of Girault-Beauquier and Nivat (\cite{GBN}; see Theorem \ref{thm:exacttile}), it is a finite computation to verify that the given tilings are correct. We illustrate the tilings with respect to the lattices generated by $\mc{V}_k$ and $\mc{W}_k$ in Figs. \ref{fig:tiling} (b) and (c), respectively. 

To consider the tiling behavior of $\PP_4(\mc{D}_{\ell,n})$, we explicitly calculate its six boundary vertices:
\begin{equation} \label{eq:vpd}
    \{(0, -n), (-n, 0), (\ell - n, \ell), (\ell - n + 1, \ell), (\ell, \ell - n + 1), (\ell, \ell - n)\}
\end{equation}
There are three families of convex hexagons that can tile the plane \cite{Reinhardt}. Examining the vertex set \eqref{eq:vpd} reveals that $\PP_4(\mc{D}_{\ell,n})$ is a convex hexagon that is not in one of these three families.
\end{proof}

\begin{lemma}  \label{lem:TWBsols}
For all $k \in \mc{S}$, $\Phi_4(k) = F_4(k)$, where
\begin{equation*}
\mc{S} := \N\cap\bigcup_{n=1}^\infty \Big[s_0(n)-\frac{n}{2}, s_0(n)+\frac{n-1}{2}\Big] \cup \Big[s_2(n)-\frac{n}{2}, s_2(n)+\frac{n}{2}\Big].
\end{equation*}
    
For all $k \in \mc{U}$, $\Phi_4(k) < F_4(k)$, where
\begin{equation*}
    \mc{U} := \N_{\geq 2}\cap\bigcup_{n=1}^\infty  \Big[s_1(n), s_1(n)+\frac{n-1}{2}\Big] \cup \Big[s_3(n), s_3(n)+\frac{n}{2}\Big].
\end{equation*}
\end{lemma}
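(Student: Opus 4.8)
The plan is to prove Lemma \ref{lem:TWBsols} by leveraging the structural results from the previous section (Theorem \ref{thm:TWB} and its tightness conditions) together with the explicit tiling constructions from Lemma \ref{lem:tiling4}. Recall that $\Phi_4(k) \leq F_4(k) = \max_{1 \leq j \leq k} f_4(j)$, and that this upper bound is achieved precisely when there exists a maximally dense polygonal tile $\PP_4(\mc{Q}_j)$ (with $j \leq k$ attaining the maximum) that tiles the plane by integer translations. So the entire lemma reduces to two tasks: (i) for each $k \in \mc{S}$, identify the value $j^*(k)$ maximizing $f_4(j)$ over $j \in [k]$ and show $\PP_4(\mc{Q}_{j^*})$ admits a lattice tiling; and (ii) for each $k \in \mc{U}$, show that the (unique) maximizer $j^*$ corresponds to a component $\mc{Q}_{s_1(n)}$ or $\mc{Q}_{s_3(n)}$ whose polygonal tile \emph{cannot} tile the plane, so the bound is not achieved.

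\textbf{Upper half ($k \in \mc{S}$, proving equality).} First I would pin down where $F_4$ attains its maximum. Using the piecewise description of $\sigma_4$ (Lemma \ref{lem:minperim}) and the fact that $f_4$ increases except at the jump points $k = s_i(n)$ (by the Claim following Lemma \ref{lem:minperim}, the site perimeter increments exactly at these values), the function $f_4$ is locally maximized at the indices just before a jump. A direct computation—comparing $f_4$ values at consecutive breakpoints—shows that for $k$ in the intervals defining $\mc{S}$, the maximizer $j^*(k)$ is one of the ``good'' components $\mc{Q}_{s_0(n)} = \mc{A}_{n,n}$ or $\mc{Q}_{s_2(n)} = \mc{B}_{n,n}$ (invoking Remark \ref{rem:Q=sieben}), whose polygonal tiles lie on the two ``sides'' of the diamond spiral handled by parts (1) and (2) of Lemma \ref{lem:tiling4}. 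Concretely, I would verify that the ranges $[s_0(n) - \tfrac{n}{2}, s_0(n)+\tfrac{n-1}{2}]$ and $[s_2(n)-\tfrac{n}{2}, s_2(n)+\tfrac{n}{2}]$ are exactly the sets of $k$ for which the tileable $\PP_4(\mc{Q}_{j^*})$ realizes the running maximum $F_4(k)$; the endpoints of these intervals are determined by solving the inequalities $f_4(s_i(n)) \geq f_4(s_{i'}(n'))$ between neighboring candidate maximizers. Once the correct tileable maximizer is identified, the explicit lattices $\mc{V}_k$ and $\mc{W}_k$ from Lemma \ref{lem:tiling4} furnish a witness image of density exactly $F_4(k)$, giving $\Phi_4(k) = F_4(k)$.

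\textbf{Lower half ($k \in \mc{U}$, proving strict inequality).} For $k \in \mc{U}$ the maximizer $j^*(k)$ falls on the other two sides of the spiral, where the densest available tile is $\PP_4(\mc{Q}_{s_1(n)})$ or $\PP_4(\mc{Q}_{s_3(n)})$. By Remark \ref{rem:Q=sieben} these are the components $\mc{D}_{n,n}$ and $\mc{D}_{n,n+1}$, instances of the family $\mc{D}_{\ell,n}$. Here I would argue that $j^*$ is the \emph{unique} maximizer of $f_4$ over $[k]$—this is the subtle point, since if some other tileable component achieved the same density, equality could still hold. Strict uniqueness follows because the jump in $\sigma_4$ at $s_1(n)$ and $s_3(n)$ makes $f_4(j^*)$ strictly exceed all neighboring values in the relevant range (again using the site-perimeter increment Claim). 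Given uniqueness, the first inequality in Theorem \ref{thm:TWB} is tight only if $\PP_4(\mc{Q}_{j^*})$ tiles the plane; but part (3) of Lemma \ref{lem:tiling4} says $\PP_4(\mc{D}_{\ell,n})$ does not tile, so the density $F_4(k)$ is unattainable and $\Phi_4(k) < F_4(k)$.

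\textbf{Main obstacle.} The routine-looking but genuinely delicate step is the arithmetic that determines \emph{exactly} which interval around each $s_i(n)$ belongs to $\mc{S}$ versus $\mc{U}$—i.e.\ computing, for each $k$, which breakpoint $s_i(n)$ wins the maximum in $F_4(k) = \max_{j\leq k} f_4(j)$. This requires carefully comparing the piecewise rational values $f_4(s_i(n))$ across different $i$ and $n$ and solving the resulting inequalities for the half-integer interval endpoints $s_i(n) \pm \tfrac{n}{2}$; a sign error or off-by-one there would misclassify infinitely many $k$. The second genuine subtlety is establishing \emph{uniqueness} of the maximizer in the $\mc{U}$ case, since strictness of $\Phi_4(k) < F_4(k)$ hinges on ruling out any alternate tileable component of equal density—this is where the exact form of the site-perimeter jump relation \eqref{eq:Qdiff} must be used rather than the asymptotic bound.
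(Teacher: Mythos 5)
Your plan for the equality half is essentially the paper's argument (Lemma \ref{lem:tiling4} parts (1)--(2) combined with the piecewise formula of Lemma \ref{lem:closedform}), but the strictness half has a genuine gap. You justify $\Phi_4(k) < F_4(k)$ by citing the remark following Theorem \ref{thm:TWB}---that the first inequality is tight only if the maximally dense polygonal tile tiles the plane---as if it were an established result. It is not: the paper states that remark without proof, and its ``only if'' direction is precisely the nontrivial content that a proof of this lemma must supply. The difficulty is that $\Phi_4(k)$ is a \emph{supremum} over infinite images, so it could equal $F_4(k)$ via a sequence of images $I_m$ with $\rho(I_m) \to F_4(k)$ even if no single image attains that density; uniqueness of the densest tile does not by itself rule this out. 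The paper closes exactly this hole with a limiting argument: uniqueness (your observation, via Lemmas \ref{lem:closedform} and \ref{lem:maxarea}) implies the other tiles contribute nothing to the limiting density, so one may assume each $I_m$ is a packing of isometric copies of $T = \PP_4(\mc{D}_{n,n})$ or $\PP_4(\mc{D}_{n,n+1})$; an averaging argument then produces $r \times r$ regions covered up to a $1-2\eps_\ell$ fraction by disjoint copies of $T$; finiteness of the configuration space of such regions forces exactly covered regions for large $\ell$; and a standard extension argument (\cite{GOT}) upgrades ``disjoint copies of $T$ cover arbitrarily large squares'' to ``$T$ tiles the plane,'' contradicting Lemma \ref{lem:tiling4}(3). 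Without some version of this compactness step, your conclusion does not follow. Note also that copies of $T$ in an image may appear in any of four rotations, so the tiling you must rule out is by isometric copies, not merely integer translates; this is fine only because part (3) of Lemma \ref{lem:tiling4} (via Reinhardt's classification \cite{Reinhardt}) excludes all tilings by the convex hexagon $\PP_4(\mc{D}_{\ell,n})$.

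A smaller inaccuracy: in the equality half you assert that the maximizer $j^*(k)$ is always $s_0(n)$ or $s_2(n)$. For $k$ in the left portion of each interval of $\mc{S}$, e.g.\ $k \in [s_2(n)-\frac{n}{2},\, s_2(n))$, the piecewise formula gives $F_4(k) = f_4(k)$, so the maximizer is $k$ itself and the witness must be a tiling by $\PP_4(\mc{Q}_k)$ for that intermediate spiral component---which is exactly why Lemma \ref{lem:tiling4} parts (1) and (2) are stated for whole ranges of $k$ along each side of the spiral rather than only at the corner values $s_0(n)$, $s_2(n)$. Your ``concrete verification'' step would presumably surface this, but as written the identification of $j^*$ is wrong for half of $\mc{S}$.
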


\begin{proof}
Fix a natural number $n$. By Lemma \ref{lem:tiling4} part (1), $\PP_4(\mc{Q}_k)$ tiles the plane (by translation) for $k \in [s_1(n)+1, s_1(n)+n]$. Note that $s_1(n)+n = s_2(n)$. The white pixel density of this tiling is $\rho(\PP_4(\mc{Q}_k)) = \frac{k}{\det(\mc{V}_k)} = \frac{k}{k+2n}$. By Lemma \ref{lem:closedform}, this density equals $F_4(k)$ for all $k \in [s_1(n)+\frac{n}{2}, s_2(n))$. Additionally, $F_4(k) = \frac{s_2(n)}{s_2(n)+2n}$ for all $k \in [s_2(n), s_2(n) + \frac{n+1}{2})$, and the specified tiling of the plane by $\PP_4(\mc{Q}_{s_2(n)})$ attains this density. This shows there are witnesses to $\Phi_4(k) = F_4(k)$ for all $k \in [s_1(n)+\frac{n}{2}, s_2(n)+\frac{n+1}{2}) = [s_2(n)-\frac{n}{2}, s_2(n)+\frac{n}{2}]$.

By Lemma \ref{lem:tiling4} part (2), $\PP_4(\mc{Q}_k)$ tiles the plane (by rotation and translation) for $k \in [s_3(n-1)+1, s_3(n-1)+n]$. Note that $s_3(n-1)+n = s_0(n)$. The white pixel density of this tiling is $\rho(\PP_4(\mc{Q}_k)) = \frac{2k}{\det(\mc{W}_k)} = \frac{k}{k+2n-1}$. By Lemma \ref{lem:closedform}, this density equals $F_4(k)$ for all $k \in [s_3(n-1)+\frac{n}{2}, s_0(n))$. Additionally, $F_4(k) = \frac{s_0(n)}{s_0(n)+2n-1}$ for all $k \in [s_0(n), s_0(n) + \frac{n}{2})$, and the specified tiling of the plane by $\PP_4(\mc{Q}_{s_0(n)})$ attains this density. Hence we obtain witnesses to $\Phi_4(k) = F_4(k)$ for all $k \in [s_3(n-1)+\frac{n}{2}, s_0(n) + \frac{n}{2}) = [s_0(n)-\frac{n}{2}, s_0(n) + \frac{n-1}{2}]$.

Fix integers $n \geq 2$ and $k \in [s_1(n), s_1(n)+\frac{n}{2})$. Up to isometries, $T := \PP_4(\mc{D}_{n,n})$ is the unique tile in $\PP_4(S_4(k))$ that has density $F_4(k)$ (by Lemmas \ref{lem:closedform} and \ref{lem:maxarea}). Assume for sake of contradiction that there exists a sequence of infinite images $I_m$ such that $\lim_{m \to \infty} \rho(I_m) = F_4(k)$ and each image contains white 4-connected components of size at most $k$. We want to show that this implies $T$ tiles the plane. 

By the convergence of $\rho(I_m)$ to $F_4(k)$, the polygonal tiles in $\PP_4(S_4(k)) \setminus T$ make no contribution to the limiting density, and hence we can assume without loss of generality that $I_m$ is a packing of isometric copies of $T$. If $j$ is an odd natural number, let $B(j)$ denote the closed $j \times j$ box centered at the origin. The convergence of $\rho(I_m)$ to $F_4(k)$ implies that $P_\ell := I_\ell \cap B(2\ell-1)$ is a sequence of packings of $T$ that cover a $1- \eps_\ell$ fraction of the area of $B(2\ell-1)$, where $\eps_\ell \to 0$. Fix $r \in \N$, and notice that for each sufficiently large $\ell$ there is an $r \times r$ region $b_\ell(r) \subset P_\ell$ such that at least a $1-2\eps_\ell$ fraction of $b_\ell(r)$ is covered by disjoint copies of $T$. The existence of $b_\ell(r)$ is implied by an averaging argument. Each copy of $T$ that intersects $b_\ell(r)$ is an integer translation of one of its four possible rotations, hence the configuration space of $b_\ell(r)$ is finite. Then since the covered fraction $1-2\eps_\ell$ converges to 1, $b_\ell(r)$ is fully covered (up to a set of measure zero) by disjoint copies of $T$ for all sufficiently large $\ell$. Since we imposed no restrictions on $r$, this shows that disjoint copies of $T$ cover arbitrarily large square regions of the plane. A standard extension argument then shows that $T$ must tile the whole plane (\cite[p. 69]{GOT}), which is a contradiction to Lemma \ref{lem:tiling4}, part (3).  

Similarly, fix integers $n \geq 1$ and $k \in  [s_3(n), s_3(n)+\frac{n+1}{2})$. Up to isometries, $T := \PP_4(\mc{D}_{n,n+1})$ is the unique tile in $\PP_4(S_4(k))$ that has density $F_4(k)$ (by Lemmas \ref{lem:closedform} and \ref{lem:maxarea}). The same argument as above shows that $\Phi_4(k) < F_4(k)$. 
\end{proof}

\subsection{Finite images}\label{sec:finiteMGCS}

In the finite case, we are primarily interested in the tightness of the bounds on the MGCS. Let $I$ be one of the witnesses to $\Phi_\al(k)$ that we generated in the previous section (i.e., one of the tilings given in Lemmas \ref{lem:P8tiling} or \ref{lem:tiling4}). Since $I$ is periodic with respect to a sublattice of $\Z^2$, we can generate arbitrarily large finite subimages of $I$ that have the same white pixel density as $I$ (Claim \ref{latticesq}). As illustrated in Example \ref{ex:tight}, taking these subimages to be large enough affords finite images where the Theorem \ref{thm:bounds} bounds are tight. 

\begin{claim}
\label{latticesq}
    If $I$ is an infinite image generated by tiling a fundamental domain $\mc{F}$ with respect to a lattice $\mc{L} \subseteq \Z^2$, for any $\ell \in \N$ we can construct a $\ell \det(\mc{L}) \times \ell \det(\mc{L})$ subimage of $I$ that has the same white pixel density as $I$.
\end{claim}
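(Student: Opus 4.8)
The plan is to reduce the arbitrary lattice $\mathcal{L}$ to an axis-aligned \emph{square} period, after which the square subimage is assembled trivially from whole period blocks. The key algebraic observation is that $\mathcal{L} \subseteq \mathbb{Z}^2$ is a sublattice of index $D := \det(\mathcal{L})$ (for a sublattice of $\mathbb{Z}^2$, the covolume $\det(\mathcal{L})$ equals the index $[\mathbb{Z}^2 : \mathcal{L}]$). Thus the quotient group $\mathbb{Z}^2/\mathcal{L}$ is finite of order $D$, so by Lagrange's theorem every element has order dividing $D$; that is, $Dv \in \mathcal{L}$ for every $v \in \mathbb{Z}^2$. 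In particular $(D,0),(0,D) \in \mathcal{L}$, whence $D\mathbb{Z}^2 \subseteq \mathcal{L}$.

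Since $I$ is $\mathcal{L}$-periodic and $(D,0),(0,D)\in\mathcal{L}$, the image $I$ is invariant under horizontal and vertical translation by $D$. Consequently every axis-aligned $D \times D$ block with integer corners contains the same number $W$ of white pixels. I would next record that $\rho(I) = W/D^2$: writing $j = qD + s$ with $0 \leq s < D$, the $j \times j$ window from Definition \ref{def:rho} decomposes into $q^2$ full $D \times D$ blocks (each with exactly $W$ white pixels) plus a boundary strip of area $O(j)$, so the white count is $\tfrac{W}{D^2}j^2 + O(j)$ and the liminf equals $W/D^2$.

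Finally, set $N := \ell\det(\mathcal{L}) = \ell D$ and take the subimage supported on the square $\{0,1,\dots,N-1\}^2$. Because $D \mid N$, this square partitions into exactly $(N/D)^2 = \ell^2$ axis-aligned $D \times D$ blocks, each contributing $W$ white pixels, giving $\ell^2 W$ white pixels among $N^2 = \ell^2 D^2$ total pixels. Its white pixel density is therefore $\tfrac{\ell^2 W}{\ell^2 D^2} = \tfrac{W}{D^2} = \rho(I)$, as required.

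The only substantive step is the first one: a general fundamental domain $\mathcal{F}$ of $\mathcal{L}$ need not tile any square, so one cannot directly glue copies of $\mathcal{F}$ into an $N \times N$ block. The passage to the coarser square period $D\mathbb{Z}^2 \subseteq \mathcal{L}$ is exactly what removes this obstruction, and everything after it is routine bookkeeping.
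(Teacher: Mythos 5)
Your proposal is correct and follows essentially the same route as the paper: both rest on the observation that $\det(\mc{L})\cdot\Z^2 \subseteq \mc{L}$ (which the paper calls ``straightforward'' and you justify via the index/Lagrange argument), so that $I$ is periodic under an axis-aligned square lattice and a suitably sized integer-aligned square block inherits the density of $I$. Your write-up merely fills in the details the paper leaves implicit, namely the proof of the sublattice containment and the $O(j)$ boundary estimate identifying the liminf density with the per-block density.
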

\begin{proof}
By definition,
\begin{equation*}
    I = \bigcup_{y \in \mc{L}} (\mc{F}+y).
\end{equation*}
Let $K^i \subset I$ denote the $i \times i$ square region with vertices $\{(-\frac{1}{2},-\frac{1}{2}),(-\frac{1}{2},i-\frac{1}{2}),(i-\frac{1}{2},-\frac{1}{2}),(i-\frac{1}{2},i-\frac{1}{2})\}$. The $(-\frac{1}{2},-\frac{1}{2})$ offset from $\Z^2$ is to ensure $K^i$ contains only full pixels. It is straightforward to show that $\ell \det(\mathcal{L})\cdot \Z^2 \subseteq \mathcal{L}$. Therefore $I$ is periodic with respect to $\ell\det(\mathcal{L})\cdot \Z^2$, hence
\begin{equation}
    I = \bigcup_{z \in \ell \det(\mc{L}) \cdot \Z^2} \big(K^{\ell \det(\mc{L})} + z\big).
\end{equation}
It follows that $\rho(K^{\ell \det(\mc{L})}) = \rho(I)$. 
\end{proof}

\begin{example}
\label{ex:tight}
    For the densities $d_j = \frac{j}{j+1}$, $j \geq 2$, we show how to explicitly construct finite images where Theorem \ref{thm:bounds} is tight.
\end{example}

\begin{figure}[ht]
    \centering
    \includegraphics[width=0.75\textwidth]{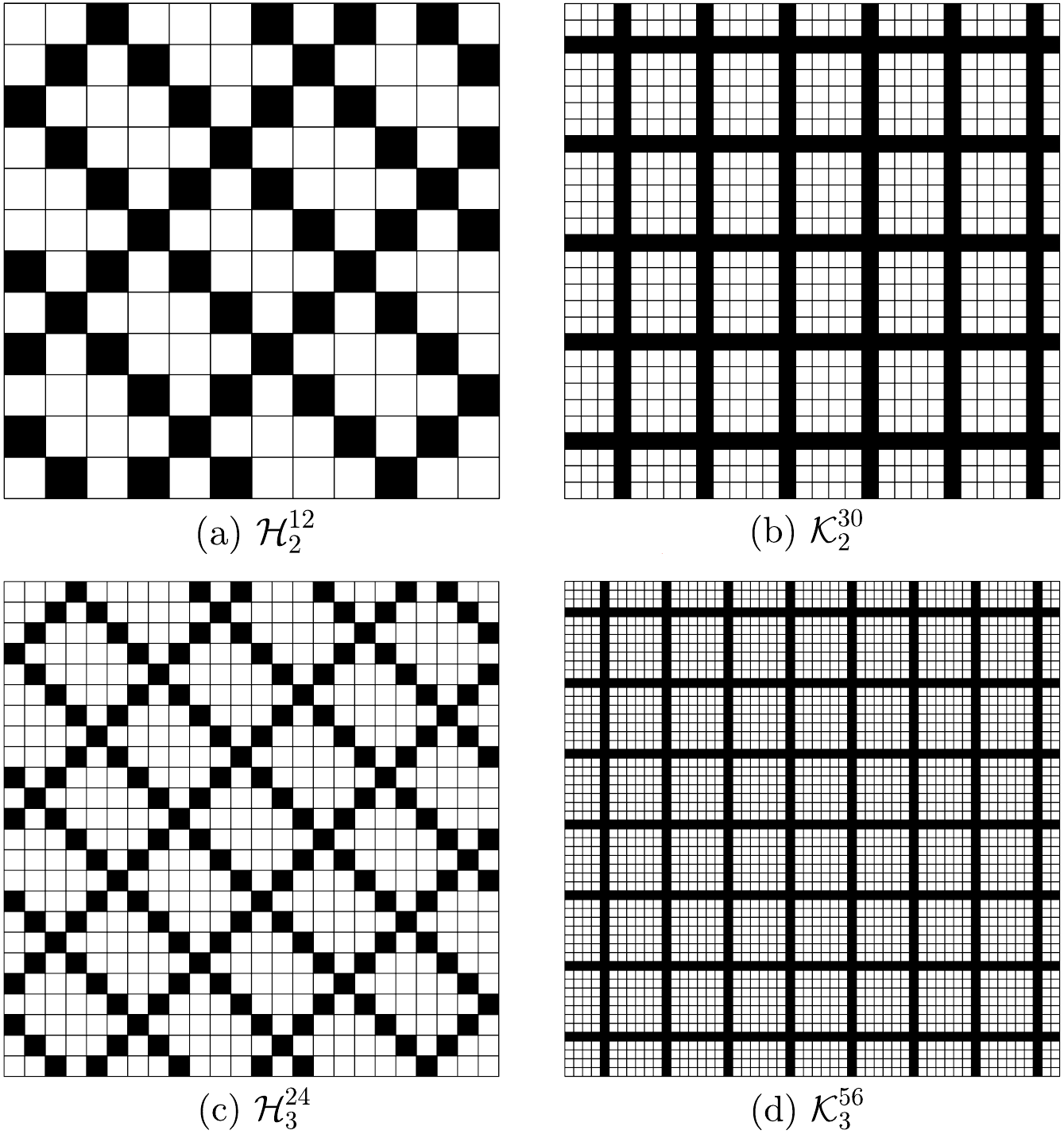}
    \caption{Taking a $4 \times 4$ grid of the subimages (a) and (b) yields two images that illustrate tightness of all the Theorem \ref{thm:bounds} bounds at $d = \frac{2}{3}$. Taking a $6 \times 6$ grid of the subimages (c) and (d) yields two images that illustrate tightness of all the Theorem \ref{thm:bounds} bounds at $d = \frac{3}{4}$.}
    \label{fig:dj-images}
\end{figure}

For each $j \geq 2$, we want to construct an $n_j \times n_j$ binary image that has white pixel density $d_j$ and maximal white 4-connected components of size $\C_4(d_j)$. Similarly, we want to construct an $m_j \times m_j$ binary image that has white pixel density $d_j$ and maximal white 8-connected components of size $\C_8(d_j)$. 

For the 4-connected case, we start with a tiling of the plane by $\PP_4(\mc{A}_{j, j+1})$ (see Remark \ref{rem:ABlattices}). Letting $\mc{L}(\mathscr{A})$ denote the lattice generated by $\mathscr{A}$, the infinite tiling can be formally written as
\begin{equation}
    \mc{G}_{j} := \bigcup_{y \in \mc{L}(\mathscr{A})} (\PP_4(\mc{A}_{j,j+1})+y), \hspace{6mm} \mathscr{A} := \begin{pmatrix}
j & -j-1 \\
j & j+1
\end{pmatrix}.
\end{equation}

Noting that $\det(\mathscr{A}) = 2j(j+1) =: u$, let $\mc{H}_j^i \subset \mc{G}_j$ denote the $i \times i$ square region with vertices $\{(-\frac{1}{2},-\frac{1}{2}),(-\frac{1}{2},i-\frac{1}{2}),(i-\frac{1}{2},-\frac{1}{2}),(i-\frac{1}{2},i-\frac{1}{2})\}$. By Claim \ref{latticesq}, $\rho(\mc{H}_j^{\ell u})= \rho(\mc{G}_j) = d_j$ for any $\ell \in \N$. To ensure $F_4^{-1}(r_4) = \C_4(d_j) = 2j^2$, we need to make $n_j$ large enough such that $r_4(d_j, n_j, n_j) > F_4(2j^2-1) = \frac{2j^2-1}{2j^2+2j-1}$. An easy calculation shows that $n_j \geq (4j^2-2)(j+1)$ suffices. To satisfy the constraint $u~ | ~n_j$, we set $n_j := 4j^2(j+1)=2ju$. By construction, $\mc{H}_j^{n_j}$ is then an $n_j \times n_j$ image with density $d_j$, and its largest white 4-connected components have size $F_4^{-1}(r_4(d_j, n_j, n_j))$ (see Figs. \ref{fig:dj-images} (a) and (c)). 

In the 8-connected case, the lattice tiling we consider is
\begin{equation}
    \mc{J}_{j} := \bigcup_{y \in \mc{L}(\mathscr{R})} (\PP_8(\mc{R}_{2j(2j+1)})+y), \hspace{6mm} \mathscr{R} := \begin{pmatrix}
2j+1 & 0 \\
0 & 2j+2
\end{pmatrix}.
\end{equation}
Observe that $\PP_8(\mc{R}_{2j(2j+1)})$ is a rectangle with area $\det(\mathscr{R}) = (2j+1)(2j+2) =: v$. For the sake of comparison, we choose to construct a square subimage in the same way as the 4-connected case. Let $\mc{K}_j^i \subset \mc{J}_j$ denote the $i \times i$ square region with vertices $\{(-\frac{1}{2},-\frac{1}{2}),(-\frac{1}{2},i-\frac{1}{2}),(i-\frac{1}{2},-\frac{1}{2}),(i-\frac{1}{2},i-\frac{1}{2})\}$. By Claim \ref{latticesq}, $\rho(\mc{K}_j^{\ell v})= \rho(\mc{J}_j) = d_j$ for any $\ell \in \N$. To ensure $F_8^{-1}(r_8) = \C_8(d_j) = 2j(2j+1)$, we need to make $m_j$ large enough such that $r_8(d_j, m_j, m_j) > F_8(2j(2j+1)-1) = \frac{2j(2j+1)-1}{2j(2j+1)+4j+1}$. An easy calculation shows that $m_j \geq (4j^2+4j-1)(2j+1)$ suffices. To satisfy the constraint $v~ | ~m_j$, we set $m_j := (4j^2+4j)(2j+1) = 2jv$. Hence $\mc{K}_j^{m_j}$ is an $m_j \times m_j$ image with density $d_j$, and its largest white 8-connected components have size $F_8^{-1}(r_8(d_j, m_j, m_j))$ (see Figs. \ref{fig:dj-images} (b) and (d)). 

\section{Additional solutions in the 4-connected case}
\label{sec:4conncomp}

In Theorem \ref{thm:TWBsol}, $k = 3 = s_3(1)$ and $k = 6 = s_1(2)$ are the smallest integers where the value of $\Phi_4(k)$ is unknown. The challenge that arises in these cases is that $\Phi_4(k) < F_4(k)$ (by Lemma \ref{lem:TWBsols}), so we need to use a more refined argument to construct an upper bound on $\Phi_4(k)$. We find that $\Phi_4(6)$ is actually equal to $\Phi_4(5) = \frac{5}{8}$.

\begin{figure}[ht]
\centering
\includegraphics[width=0.78\textwidth]{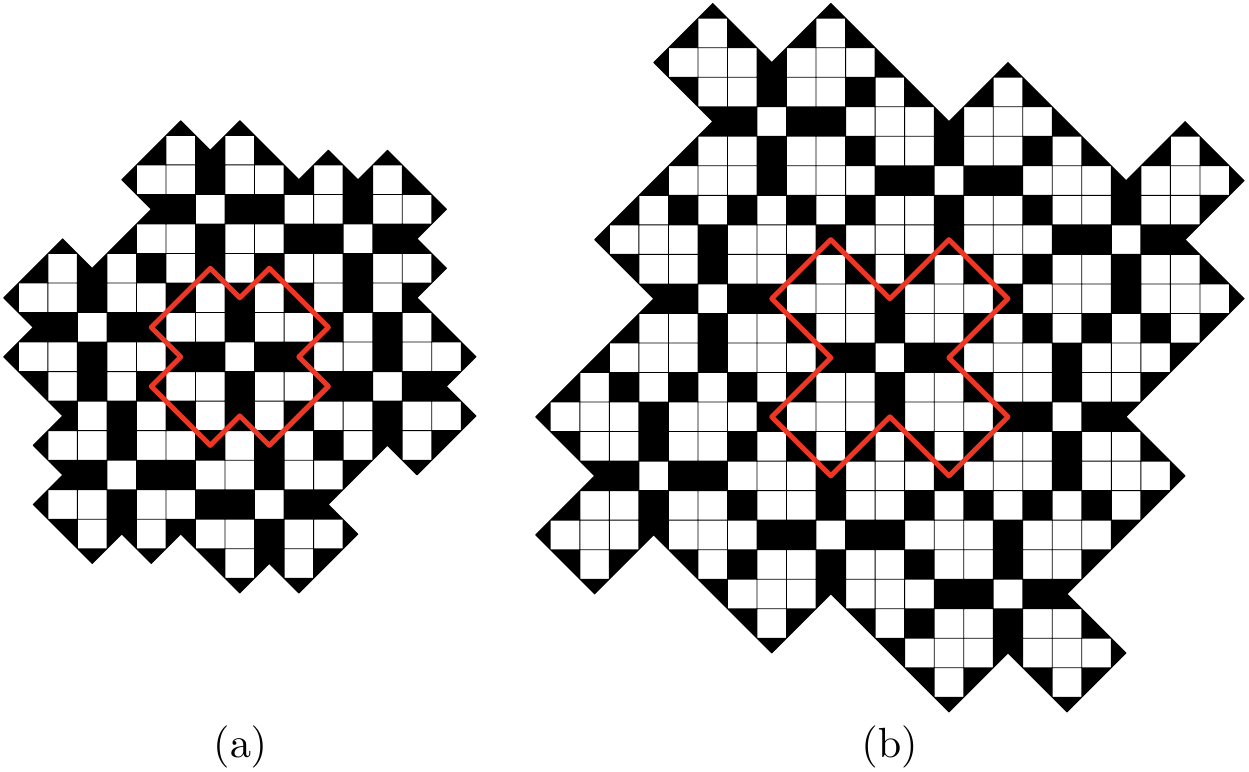}
\caption{Tiling witnesses to $\Phi_4(3)$ (a) and $\Phi_4(6)$ (b) with fundamental domains outlined in red.}
\label{fig:36tilings}
\end{figure}

\begin{theorem}
\label{thm:36}
    \begin{align*}
        \Phi_4(3) &= \frac{13}{24}\\
        \Phi_4(6) &= \frac{5}{8}
    \end{align*}
\end{theorem}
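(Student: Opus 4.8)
The plan is to prove matching lower and upper bounds for each value. The lower bounds require only exhibiting periodic infinite images of the stated density whose white $4$-connected components all have size at most $k$, and the two tilings drawn in Figure~\ref{fig:36tilings} do exactly this. For $\Phi_4(6)$ the lower bound is in fact immediate: $\Phi_4$ is nondecreasing in $k$ (as $S_4(k-1)\subseteq S_4(k)$), and $\Phi_4(5)=\frac58$ since $5\in\mc{S}$ by Lemma~\ref{lem:TWBsols}; any witness to $\Phi_4(5)$ uses only components of size $\le 5\le 6$, so $\Phi_4(6)\ge\Phi_4(5)=\frac58$. Thus the entire difficulty is concentrated in the upper bounds $\Phi_4(3)\le\frac{13}{24}$ and $\Phi_4(6)\le\frac58$, where the refined argument promised at the start of the section is needed: recall $F_4(3)=\frac{6}{11}>\frac{13}{24}$ and $F_4(6)=\frac{12}{19}>\frac58$, so Theorem~\ref{thm:TWB} by itself does not suffice.

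For the upper bounds I would begin from the polygonal-tile partition. By Lemmas~\ref{lem:voronoi} and~\ref{lem:disjoint} and the first inequality of Theorem~\ref{thm:TWB}, $\rho(I)$ is an area-weighted average of the individual tile densities $\rho(\PP_4(T))$ over the white components $T$ of $I$. The next step is a complete classification, by density, of the polygonal tiles of all $4$-connected components of size at most $k$, using the closed form $\sigma_4(j)=\lceil 2\sqrt{2j-1}\rceil+2$ together with the uniqueness statements in Lemmas~\ref{lem:maxarea} and~\ref{lem:minperim}. For $k=3$ this gives a clean dichotomy: every tile has density exactly $\frac12$ (the monomino, domino, and straight tromino tiles, none of which can have interior site perimeter) except the L-tromino tile $\PP_4(\mc{Q}_3)$, which has density $\frac{6}{11}$. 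For $k=6$ it gives that $\PP_4(\mc{Q}_6)$ is the unique tile of density exceeding $\frac58$ (namely $\frac{12}{19}$), that $\PP_4(\mc{Q}_5)$ is the unique tile of density exactly $\frac58$, and that every remaining tile has density at most $\frac35$.

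With the classification in hand, each upper bound reduces to a purely geometric packing statement via an excess computation: writing each tile's contribution as $|T|-d\cdot\mathrm{area}(\PP_4(T))$ with $d$ the target density, the inequality $\rho(I)\le d$ is equivalent to the total excess being nonpositive over large regions. For $k=3$, since every non-L tile has density exactly $\frac12$, letting $a$ be the number of L-tromino tiles and $A_0$ the total area of the others, one computes $\rho(I)\le\frac{13}{24}$ iff $A_0\ge\frac{a}{2}$, i.e.\ iff the L-tromino tiles occupy at most an $\frac{11}{12}$ fraction of the plane; the remaining $\frac{1}{12}$, carrying density $\frac12$, is exactly what pins the average at $\frac{13}{24}$. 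For $k=6$ the analogous statement is more delicate because the $\PP_4(\mc{Q}_5)$ fillers carry density exactly $\frac58$ and hence contribute zero excess: here one must show that the positive excess created by each $\mc{Q}_6$ tile (which equals $\tfrac{19}{2}\cdot(\tfrac{12}{19}-\tfrac58)=\tfrac1{16}$) is always outweighed by the deficit coming from the strictly-sub-$\frac58$ tiles (density $\le\frac35$, hence deficit at least $\tfrac1{40}$ per unit area) that it forces into its neighborhood.

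The heart of the argument, and the main obstacle, is establishing these packing bounds. Both $\PP_4(\mc{Q}_3)$ and $\PP_4(\mc{Q}_6)$ are convex hexagons that do not tile the plane (Lemma~\ref{lem:tiling4}(3), via the Reinhardt classification), so gaps are forced; the real work is to make the forced deficit \emph{local} and \emph{quantitative}. I would carry this out with a discharging argument on the adjacency structure of these hexagons: assign to each copy of the dense hexagon its excess and redistribute it to the neighboring lower-density tiles, then verify, by a finite case analysis of the admissible edge-to-edge contacts (in any packing the tiles occur only as integer translates of their finitely many rotations, as in the proof of Lemma~\ref{lem:TWBsols}), that every copy is adjacent to enough deficit to absorb its charge. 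The $k=6$ case is strictly harder than $k=3$ precisely because the $\mc{Q}_5$ fillers are ``free'': the case analysis must rule out every efficient co-tiling of $\mc{Q}_6$ and $\mc{Q}_5$ hexagons and show that each $\mc{Q}_6$ hexagon forces a definite amount of genuinely-below-$\frac58$ tile area (enough to contribute deficit at least $\frac1{16}$) in its vicinity. Once these two local lemmas are in place, combining them with the excess reduction of the previous paragraph yields $\Phi_4(3)\le\frac{13}{24}$ and $\Phi_4(6)\le\frac58$, which together with the constructions completes the proof.
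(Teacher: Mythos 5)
Your frame matches the paper's: lower bounds from the Figure \ref{fig:36tilings} witnesses (your monotonicity shortcut $\Phi_4(6)\ge\Phi_4(5)=\frac{5}{8}$ is valid), upper bounds via the disjoint polygonal-tile partition plus a density classification of the tiles in $\PP_4(S_4(3))$ and $\PP_4(S_4(6))$, and your arithmetic (the $\frac{11}{12}$ reformulation for $k=3$, the excess $\frac{1}{16}$ per $\mc{Q}_6$ tile, the deficit $\frac{1}{40}$ per unit area of sub-$\frac{3}{5}$ tiles) is correct. However, the two ``local packing lemmas'' that you defer to a discharging argument are not a routine verification: they are the entire content of the paper's proof, and the mechanism you sketch has a genuine soundness problem. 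Discharging over ``admissible edge-to-edge contacts'' is not well-founded here, because a packing by these hexagons together with the other tiles and uncovered black area need not be edge-to-edge at all, so the set of contact configurations along a hexagon's boundary is not finite in any directly usable sense. Moreover, the deficit that must absorb a dense tile's excess need not lie in a neighboring \emph{tile}: in the extremal configurations it lies either in black regions covered by no tile, or in a single monomino tile shared with multiplicity four among several dense hexagons---which is exactly where the bound $\frac{13}{24}$ (resp.\ $\frac{5}{8}$) is attained with equality, so the charging rule needs an exact count rather than an estimate, and a rule of the form ``each copy is adjacent to enough deficit'' cannot be verified without knowing where that deficit is forced to appear.

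What your proposal is missing is the paper's key localization: the analysis is carried out not on tile contacts but on a fixed 7-pixel neighborhood of the \emph{apex} (the sharp vertex) of each dense hexagon. Those 7 cells are 4-connected, hence cannot all be white, and up to symmetry only four completions exist (Fig.~\ref{fig:nose}). Three of them force a black 3-cycle whose enclosed triangle lies in no polygonal tile; splitting it into three area-$\frac{1}{6}$ pieces and attaching them to incident dense tiles caps their density at $\frac{9}{17}$ (resp.\ $\frac{18}{29}$). The fourth forces a shared apical cell (for $k=3$) or apical cell/component (for $k=6$), and an explicit count of the at most four dense tiles meeting an apical cell---plus, for $k=6$, the finite enumeration of apical components in Fig.~\ref{fig:enumerate} with a Pick's-theorem density bound for each---yields exactly $\frac{13}{24}$, exactly $\frac{5}{8}$, and strictly less than $\frac{5}{8}$, respectively. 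This pixel-level case analysis is what makes the problem finite and the bounds exact; without it, or a worked-out substitute, your proposal correctly isolates the obstacle but does not overcome it, so the upper bounds $\Phi_4(3)\le\frac{13}{24}$ and $\Phi_4(6)\le\frac{5}{8}$ remain unproven.
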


\begin{proof}
Figure \ref{fig:36tilings} shows witnesses to the claimed density values, hence it is sufficient to prove $\Phi_4(3) \leq \frac{13}{24}$ and $\Phi_4(6) \leq \frac{5}{8}$. Let $I_3$ denote any infinite image that only contains white $4$-connected components in $S_4(3)$ and let $I_6$ denote any infinite image that only contains white $4$-connected components in $S_4(6)$. We want to show $I_3$ and $I_6$ can be partitioned into regions with density at most $\frac{13}{24}$ and $\frac{5}{8}$, respectively. For the remainder of this proof, we refer to white $4$-connected components as just components, and we consider two components or tiles to be distinct if and only if there is no isometry of $\Z^2$ that maps one to the other. 

Since distinct polygonal tiles have empty intersection (Lemma \ref{lem:disjoint}), there is a trivial partition of $I_3$ and $I_6$ into black regions and translations of polygonal tiles generated by $S_4(3)$ and $S_4(6)$. Up to isometries of $\Z^2$, there is a unique polygonal tile in $\PP_4(S_4(3))$ and $\PP_4(S_4(6))$ that attains the respective $F_4(3) = \frac{6}{11} > \frac{13}{24}$ and $F_4(6) = \frac{12}{19} > \frac{5}{8}$ upper bounds on the tiling density (Lemma \ref{lem:maxarea}). These polygonal tiles are generated by $\mc{Q}_3$ and $\mc{Q}_6$, respectively, which are the size 3 and size 6 components that appear in Figure \ref{fig:36tilings}. All other tiles in $\PP_4(S_4(3))$ and $\PP_4(S_4(6))$ have densities less than or equal to $\frac{1}{2}$ and $\frac{5}{8}$, respectively. 
Since $\mc{Q}_6$ is the union of two copies of $\mc{Q}_3$, we find that the analysis of how $\mc{Q}_3$ behaves in the $S_4(3)$-TWB problem is similar to the analysis of how $\mc{Q}_6$ behaves in the $S_4(6)$-TWB problem.
    
\begin{figure}[ht]
\centering
\includegraphics[width=\textwidth]{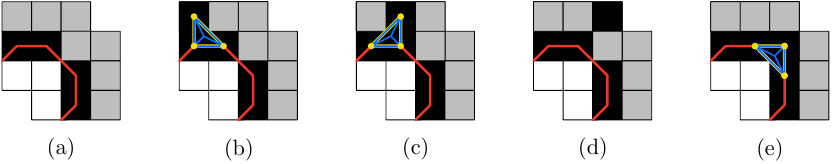}
\caption{Neighborhoods of $\mc{Q}_3$ (and $\mc{Q}_6$) around the apex point. Gray cells have unspecified color, and subfigure (a) shows the full apex neighborhood in gray, with the apex point at the center. By symmetry and component size constraints, (b), (c), (d), and (e) are the only four neighborhoods we need to consider. The small triangles are discussed in the text.}
\label{fig:nose}
\end{figure} 

We call the gray cells in Figure \ref{fig:nose} (a) the \textit{apex neighborhood}. This neighborhood consists of 7 cells that are 4-connected, so they cannot all be white. By symmetry, this leads to only four cases we need to consider, which are illustrated in Figures \ref{fig:nose} (b), (c), (d), and (e). Any copy of $\mc{Q}_3$ or $\mc{Q}_6$ that appears in $I_3$ or $I_6$ must have at least one of these apex neighborhoods. 

In Figures \ref{fig:nose} (b), (c), and (e), a black 3-cycle is formed in the image graph (highlighted in yellow). It is clear from these figures that the triangular region bounded by this 3-cycle is not contained in $\PP_4(T)$ for any white component $T$ appearing in $I_3$ or $I_6$. Since each triangle generated in this way can share an edge with at most three surrounding tiles, we split the black triangle into three smaller triangular regions of area $\frac{1}{6}$ (outlined in blue in Fig. \ref{fig:nose}), which share a vertex at the centroid of the triangle. If an isometric copy of the tile $\PP_4(\mc{Q}_3)$ in $I_3$ or $\PP_4(\mc{Q}_6)$ in $I_6$ shares an edge with one of these small triangular regions (as shown in Fig. \ref{fig:nose} (b), (c), and (e)), join the small triangular region with the respective tile. This operation results in modified tiles---denoted $\tilde{\PP}_4(\mc{Q}_3)$ and $\tilde{\PP}_4(\mc{Q}_6)$---that each contain at least one extra black triangle with area $\frac{1}{6}$. Hence $\rho(\tilde{\PP}_4(\mc{Q}_3)) \leq \frac{3}{\frac{11}{2}+\frac{1}{6}} = \frac{9}{17}$ and $\rho(\tilde{\PP}_4(\mc{Q}_6)) \leq \frac{6}{\frac{19}{2}+\frac{1}{6}} = \frac{18}{29}$. This shows that any copy of $\mc{Q}_3$ with apex neighborhood (b), (c), or (e) in $I_3$ contributes density at most $\frac{9}{17} < \frac{13}{24}$ to the construction. Similarly, any copy of $\mc{Q}_6$ with apex neighborhood (b), (c), or (e) that appears in $I_6$ contributes density at most $\frac{18}{29} < \frac{5}{8}$ to the construction. 

In Fig. \ref{fig:nose}, neighborhood (d) is special because it need not generate any black 3-cycles. In the case of the $S_4(3)$-TWB problem, the unique way to fill in the remaining gray squares in (d) to avoid creating a (b), (c), or (e)-type neighborhood is shown in Figure \ref{fig:36nbhd} (a). In the $S_4(6)$-TWB problem, there is one additional way to complete the (d) neighborhood.
\begin{figure}[ht]
\centering
\includegraphics[width=0.73\textwidth]{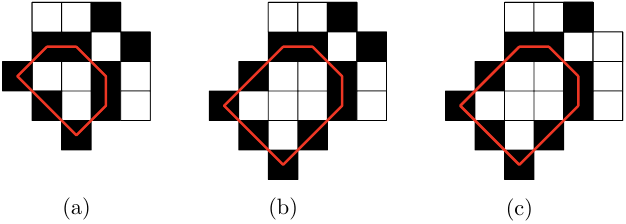}
\caption{The unique apex neighborhoods of $\mc{Q}_3$ and $\mc{Q}_6$ that incur no black triangles.}
\label{fig:36nbhd}
\end{figure}  

In the $S_4(3)$-TWB problem, let the term \textit{apical cell} denote the unique white cell that touches the apex point of a copy of $\mc{Q}_3$ with the Fig. \ref{fig:36nbhd} (a) neighborhood. For any apical cell $A$ that appears in $I_3$, let $\tilde{\PP}_4(A)$ be the union of $\PP_4(A)$ and all polygonal tiles generated by isometric copies of $\mc{Q}_3$ that have $A$ as their apical cell. There is always at least one such copy of $\mc{Q}_3$, otherwise $A$ would not be an apical cell. Since each apical cell can touch at most 4 apex points, $\rho(\tilde{\PP}_4(A)) \leq \frac{1+4\cdot 3}{2+4\cdot \frac{11}{2}} = \frac{13}{24}$. 

Therefore we have shown that there exists a partition of $I_3$ such that every copy of $\mc{Q}_3$ is part of a region $\tilde{\PP}_4(\mc{Q}_3)$ or $\tilde{\PP}_4(A)$, where $\rho(\tilde{\PP}_4(\mc{Q}_3)) \leq \frac{9}{17}$ and $\rho(\tilde{\PP}_4(A)) \leq \frac{13}{24}$. Importantly, the regions $\tilde{\PP}_4(\mc{Q}_3)$ and $\tilde{\PP}_4(A)$ do not intersect with any other polygonal tiles in $I_3$, hence any additional area in $I_3$ has white pixel density at most $\frac{1}{2}$. This completes the proof that $\Phi_4(3) \leq \frac{13}{24}$. Since there is only one choice of $\tilde{\PP}_4(A)$ that attains the density $\frac{13}{24}$, we note that the witness to $\Phi_4(3)$ (see Fig. \ref{fig:36tilings}(a)) is unique up to isometries. 

The $S_4(6)$-TWB analysis is slightly more involved, as the cases illustrated in Figs. \ref{fig:36nbhd} (b) and (c) both need to be considered. The Fig. \ref{fig:36nbhd} (b) case is nearly identical to the Fig. \ref{fig:36nbhd} (a) case above; we again use the term \textit{apical cell} to denote the unique white cell that touches the apex point of a copy of $\mc{Q}_6$ with the Fig. \ref{fig:36nbhd} (b) neighborhood. For any apical cell $B$ appearing in $I_6$, let $\tilde{\PP}_4(B)$ be the union of $\PP_4(B)$ and all polygonal tiles generated by isometric copies of $\mc{Q}_6$ that have $B$ as their apical cell. Since each apical cell can touch at most 4 apex points, $\rho(\tilde{\PP}_4(B)) \leq \frac{1+4\cdot 6}{2+4\cdot \frac{19}{2}} = \frac{5}{8}$. 

\begin{figure}[ht]
\centering
\includegraphics[width=0.8\textwidth]{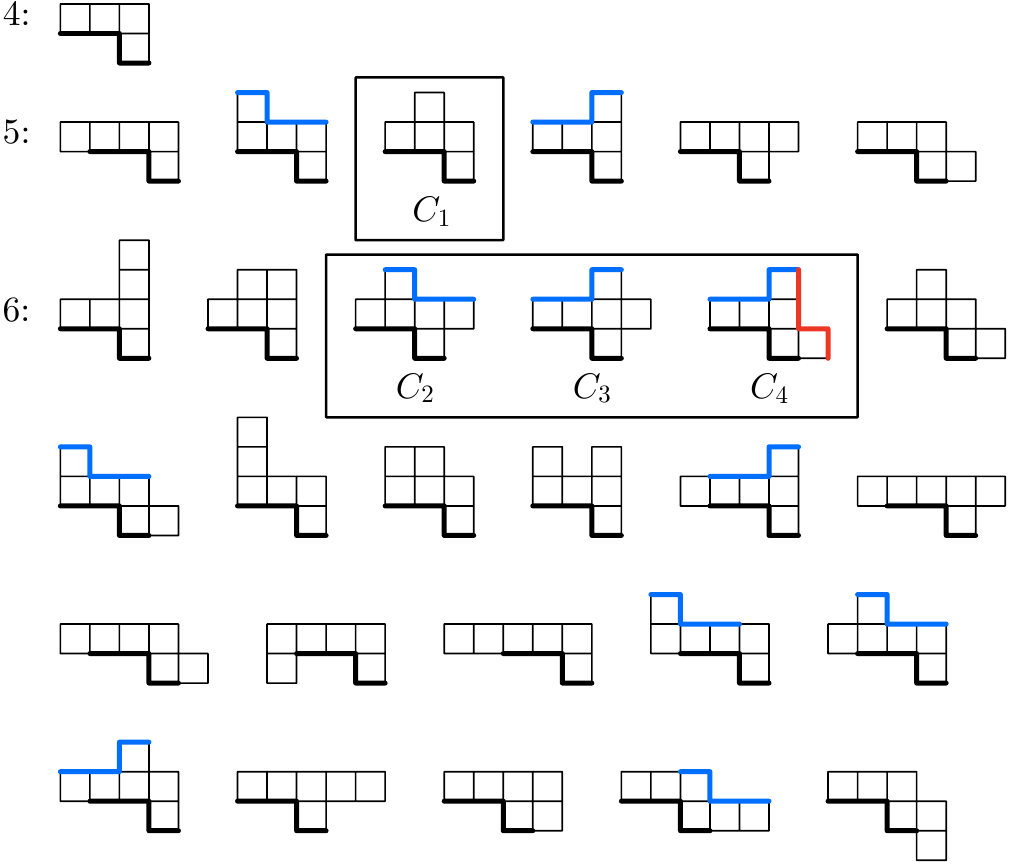}
\caption{The set of feasible apical components in $S_4(6)$ (up to isometries of $\Z^2$). Counting the site perimeters shows that the boxed components are the candidates for maximizing $\rho(\tilde{\PP}_4(C))$.}
\label{fig:enumerate}
\end{figure} 

Let the term \textit{apical component} denote the unique white component that touches the apex point of a copy of $\mc{Q}_6$ with a Fig. \ref{fig:36nbhd} (c) neighborhood. Observe that each apical component must contain a particular contour shape. For any apical component $C$ appearing in $I_6$, let $\tilde{\PP}_4(C)$ be the union of $\PP_4(C)$ and all polygonal tiles generated by isometric copies of $\mc{Q}_6$ that have $C$ as their apical component. We then upper bound the density of $\tilde{\PP}_4(C)$ by enumerating all possible choices (Fig. \ref{fig:enumerate}). By Pick's theorem (Theorem \ref{thm:Picks}), counting the site perimeter of each apical component reduces the density computation to just three cases.

\begin{align}
    \rho(\tilde{\PP}_4(C_1)) &= \frac{5+6}{5+\frac{9}{2}-1+\frac{19}{2}} = \frac{11}{18} < \frac{5}{8}\\
    \rho(\tilde{\PP}_4(C_2)) = \rho(\tilde{\PP}_4(C_3)) &\leq \frac{6+2 \cdot 6}{6+\frac{10}{2}-1+2\cdot \frac{19}{2}} = \frac{18}{29} <\frac{5}{8}\\
    \rho(\tilde{\PP}_4(C_4)) &\leq \frac{6+3 \cdot 6}{6+\frac{11}{2}-1+3\cdot\frac{19}{2}} = \frac{8}{13} < \frac{5}{8}
\end{align}

Finally, we have shown that each isometric copy of $\mc{Q}_6$ in $I_6$ is part of a disjoint region of the type $\tilde{\PP}_4(\mc{Q}_6)$, $\tilde{\PP}_4(B)$, or $\tilde{\PP}_4(C)$. Moreover, these regions are disjoint from all other polygonal tiles in $I_6$. The respective densities of these three regions is at most $\frac{18}{29}$, $\frac{5}{8}$, and $\frac{18}{29}$. Since all remaining polygonal tiles in $I_6$ have density at most $\frac{5}{8}$, it follows that $\Phi_4(6) \leq \frac{5}{8}$.
\end{proof}

It is desirable to extend the argument we used in Theorem \ref{thm:36} to construct upper bounds on $\Phi_4(s_1(n))$ and $\Phi_4(s_3(n))$ for all $n \in \N$, but this is beyond the scope of this work. More generally, the key elements that are needed to solve the remaining cases of the $S_4(k)$-TWB problem, and hence the 4-connected MGCS problem, are to identify maximally dense polygonal tiles in $\PP_4(S_4(k))$ and to characterize their tiling behavior.

\section{Future Directions}
\label{sec:future}

\noindent We conclude by asking some questions that suggest possible future directions.

\begin{enumerate}
    \item In many real-world applications, it is good to have an objective choice for what constitutes a small connected component. For example, binary images can be denoised by removing ``small'' components if the size threshold for ``small'' is well-chosen \cite{CDY}. The MGCS is a connected component size statistic that depends only on the density and dimensions of an image. Recall that we have computed the MGCS exactly for a large range of parameters, and we have given quite tight approximations in the remaining cases (see Theorem \ref{thm:bounds} and Corollary \ref{cor:MGCSsol}). We therefore ask whether the MGCS---or some simple modification of it---provides a useful connected component size threshold in any image processing applications.

    \item Recall that we have exactly computed $\Phi_8(k)$ for all $k \in \N$. We have also computed $\Phi_4(k)$ for a set of integers $k$ with natural density $\frac{1}{2}$, and we leave open the problem of computing $\Phi_4(k)$ for the remaining integers $k$.

    As this problem appears to be quite hard (e.g., see the proof of Theorem \ref{thm:36}), we note two steps that may be easier. First, we ask whether one can finish characterizing the integers $k$ such that $\Phi_4(k) = F_4(k)$. Second, Theorem \ref{thm:36} shows that $\Phi_4(s_1(2))=\Phi_4(s_0(2))$, and it is also trivially the case that $\Phi_4(s_1(1))= \Phi_4(s_0(1))$ (the sequences $s_i(n)$ are defined in Lemma \ref{lem:maxarea}). If this pattern continues, it would resolve the value of $\Phi_4(k)$ for an additional set of integers $k$ with natural density $\frac{1}{8}$. This makes it desirable to exactly compute $\Phi_4(s_1(n))$. 
    
    \item The Tiling with Boundaries (TWB) problem was defined for arbitrary connected component sets. While the first inequality in Theorem \ref{thm:TWB} concerns the general case, we otherwise focused our attention on the $S_\al(k)$-TWB problem, where $S_\al(k)$ is the set of all $\al$-connected components of size at most $k$. However, it is interesting to consider other component sets. 
    
    One can ask how much the relative orientations of components affects the solution to the TWB problem in the general case. For example, for any two fixed white $\al$-connected components $T_1, T_2 \in S_\al(k)$, what is the maximum value of $|\Phi_\al(\{T_1, T_2\}) - \Phi_\al(\{T_1, T_2'\})|$, where $T_2'$ is an isometric transformation of $T_2$?
    
    \item What results can be obtained about suitable generalizations of the TWB and MGCS problem to other lattices in $\R^n$? Even in $\R^2$, connected components on the hexagonal lattice $A_2$ (both the so-called polyhexes and polyiamonds) are routinely studied in the context of percolation. In a tiling of the plane by regular hexagons, connected components (unions of edge-connected hexagons called polyhexes) and their boundaries are both 6-connected. This suggests that the polyhex version of the TWB problem is quite natural.  
\end{enumerate}

\section*{Acknowledgements}

I am grateful to Noah Stephens-Davidowitz for numerous helpful conversations and providing invaluable feedback on various drafts of this paper. I would also like to thank Veit Elser for helpful conversations, and my colleagues in Cornell's Center for Applied Math for their feedback. Finally, I would like to thank Alma Dal Co, who introduced me to connected components in binary images.

\appendix
\section{8-connected component isoperimetry}

\begin{lemma}
\label{lem:boxbound}
    If $T$ is an $8$-connected component, let $R(T)$ denote the $4$-connected bounding rectangle of $T$, i.e. the smallest $4$-connected rectangle that surrounds $T$. Then $$|P_8(T)| \geq |P_8^{ext}(T)| \geq |R(T)|.$$
\end{lemma}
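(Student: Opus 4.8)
The plan is to prove the two inequalities separately. The first, $|P_8(T)| \geq |P_8^{ext}(T)|$, is immediate from the containment $P_8^{ext}(T) \subseteq P_8(T)$ built into the definition of the exterior site perimeter, so all the work goes into the second inequality $|P_8^{ext}(T)| \geq |R(T)|$. I would write the bounding box of $T$ as $\{a,\dots,b\}\times\{c,\dots,d\}$ with width $w := b-a+1$ and height $h := d-c+1$; counting the cells of the surrounding ring then gives $|R(T)| = 2w+2h+4$. By Lemma \ref{lem:simple} (with $(\al,\beta)=(8,4)$), the set $C := P_8^{ext}(T)$ is a simple $4$-connected cycle, and the idea is to bound its length in terms of its own bounding box. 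Letting $C$ span columns $\{a',\dots,b'\}$ and rows $\{c',\dots,d'\}$, I set $W := b'-a'+1$, $H := d'-c'+1$ and reduce the claim to two steps: (B) $W \geq w+2$ and $H \geq h+2$, and (A) any simple $4$-connected cycle has at least $2W+2H-4$ cells. Together these give $|P_8^{ext}(T)| \geq 2(w+2)+2(h+2)-4 = 2w+2h+4 = |R(T)|$.

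The main obstacle is step (B), and here I would exploit the separation property of the cycle. Consider the enclosed set $Z_8(T)$ and the exterior $E_8(T)$; by the digital Jordan curve theorem (Theorem \ref{thm:jct}) these are the two ($8$-connected) components of $\Z^2 \setminus C$, with $T \subseteq Z_8(T)$. The key observation is that no cell of $Z_8(T)$ can be $4$-adjacent to a cell of $E_8(T)$, since $4$-adjacency implies $8$-adjacency, which would place the two cells in a common $8$-connected component of $\Z^2\setminus C$, contradicting that $Z_8(T)$ and $E_8(T)$ are distinct such components. Now take a cell $(a,y_0)\in T$ in the leftmost column of $T$. Its left neighbor $(a-1,y_0)$ is $4$-adjacent to $T\subseteq Z_8(T)$, hence cannot lie in $E_8(T)$, so it lies in $Z_8(T)\cup C$. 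The same separation argument shows that the leftmost cell of the closed region $Z_8(T)\cup C$ must actually lie on $C$ (its left neighbor cannot be in $E_8(T)$), so the bounding boxes of $C$ and of $Z_8(T)\cup C$ coincide; since the latter already contains a cell in column $a-1$, we conclude $a'\leq a-1$. Running the symmetric argument on the other three extremes of $T$ yields $b'\geq b+1$, $c'\leq c-1$, $d'\geq d+1$, and hence $W\geq w+2$, $H\geq h+2$.

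For step (A) I would view $C$ as a closed walk on the grid of cell centers, where consecutive cells differ by $\pm 1$ in exactly one coordinate; since $C$ is a simple cycle, its number of edges equals its number of cells. The horizontal steps must accumulate total variation at least $2(W-1)$ in order to travel from column $a'$ out to column $b'$ and return, and likewise the vertical steps contribute at least $2(H-1)$; as every edge is either one horizontal or one vertical step, the total number of cells is at least $2(W-1)+2(H-1)=2W+2H-4$. Combining with step (B) completes the proof. The equality case, attained when $T=\mc{R}_k$ and $C$ is exactly the ring $R(T)$, is precisely the bijective projection map invoked in the proof of Lemma \ref{lem:minperim8}.
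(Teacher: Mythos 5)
Your proof is correct, but it takes a genuinely different route from the paper's. The paper proves $|P_8^{ext}(T)| \geq |R(T)|$ by constructing an explicit injection $\proj\colon R(T) \to P_8^{ext}(T)$: the four arcs of $R(T)$ are projected $45^{\circ}$ inward until they meet cells $d_8$-adjacent to $T$, auxiliary $4$-connected cycles are built so that the digital Jordan curve theorem forces each projection ray to land in the corresponding arc $\gamma_{i,i+1}$ of $P_8^{ext}(T)$, and injectivity follows because the rays within each arc are parallel and the four arc endpoints are fixed. You instead compare both quantities to bounding-box data: $|R(T)| = 2w+2h+4$ exactly; your step (B) uses the separation property of the digital Jordan curve (no cell of $Z_8(T)$ is $8$-adjacent, hence in particular $4$-adjacent, to a cell of $E_8(T)$, where the identification of $Z_8(T)$ and $E_8(T)$ as the two components of $\Z^2 \setminus P_8^{ext}(T)$ is the paper's partition \eqref{eq:Z2partition}) to force the bounding box of the cycle to be at least $(w+2)\times(h+2)$; your step (A) is a total-variation count showing that any simple $4$-connected cycle with bounding box $W \times H$ has at least $2W+2H-4$ cells. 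Both steps are sound and the arithmetic closes. What each approach buys: yours is more elementary and modular, avoiding the four-case projection bookkeeping and the construction of the auxiliary cycles, and step (A) isolates a clean, reusable isoperimetric fact about digital Jordan curves. The paper's projection, however, carries extra structure that it reuses immediately: in the proof of Lemma \ref{lem:minperim8}, the observation that $\proj$ restricts to a \emph{bijection} from $R(\mc{R}_k)$ onto $P_8(\mc{R}_k)$ is what shows the minimum site perimeter $\sigma_8(k)$ is actually attained. For that reason your closing sentence is slightly off: your argument produces no map at all, so the equality case for $\mc{R}_k$ is not ``precisely the bijective projection map''---it is simply the observation that both of your bounds are tight there; nothing in the lemma itself requires more.
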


Note that the inequality $|P_8(T)| \geq |P_8^{ext}(T)|$ is trivial, but $|P_8(T)| \geq |R(T)|$ is the result used in the proof of Lemma \ref{lem:minperim8}.

\begin{proof}
    To prove this result, we expand on the projection argument given in the proof of Theorem 8 in \cite{A06}.

    \begin{figure}[ht]
    \centering
    \includegraphics[width=\textwidth]{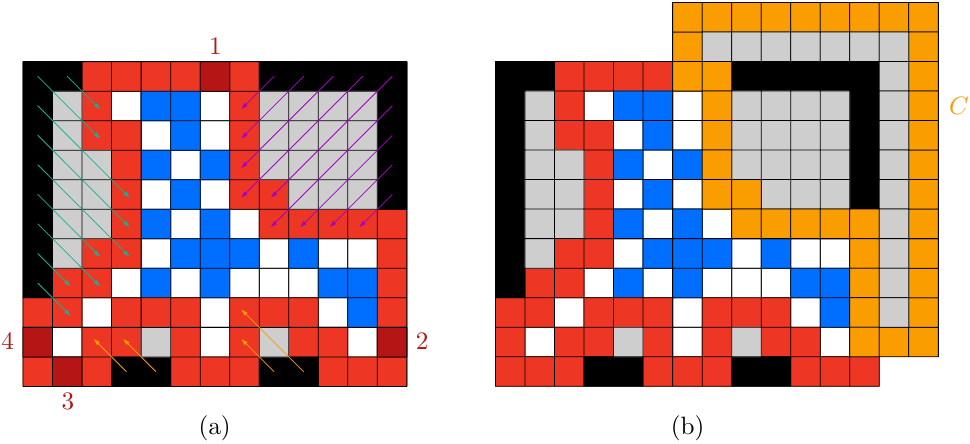}
    \caption{An example illustrating the projection of $R(T)$ into $P_8^{ext}(T)$. In (a), cells that move under the projection are marked in black---their final positions are shown by arrows. The red cells are in $P_8^{ext}(T)$ and blue cells are in $P_8^{int}(T)$. In (b), we highlight a choice of the simple cycle $C$ described in the proof.}
    \label{fig:P8_Proj}
    \end{figure}

    By the minimality of $R(T)$, there must exist at least one point along each of the four sides of $R(T)$ that is $d_4$-adjacent to a point in $T$. Traversing clockwise around $R(T)$, label the last point that is $d_4$-adjacent to $T$ along the top, right, bottom, and left sides by $1,2,3,$ and $4$, respectively.

    Now, let $\proj$ denote a map that projects the points in $R(T)$ until they are $d_8$-adjacent to $T$. Specifically, the points in $R(T)$ between 1 and 2 are projected $45^{\circ}$ down-left, the points between 2 and 3 are projected $45^{\circ}$ up-left, the points between 3 and 4 are projected $45^{\circ}$ up-right, and the points between 4 and 1 are projected $45^{\circ}$ down-right (see Fig. \ref{fig:P8_Proj}). We want to show that the codomain of $\proj$ is $P_8^{ext}(T)$ and that it is injective.

   By construction, the points $1,2,3,$ and $4$ are in the set $P_8^{ext}(T)$. Since $P_8^{ext}(T)$ is a simple cycle (Lemma \ref{lem:simple}), for all $i \in \{1,2,3,4\}$ let $\gamma_{i, i+1} \subset P_8^{ext}(T)$ be the unique 4-connected set of points between $i$ and $i+1$ (endpoints included), traveling clockwise. Similarly, let $R_{i,i+1} \subset R(T)$ be the unique 4-connected set of points between $i$ and $i+1$ (endpoints included), traveling clockwise. 

   To show show the codomain of $\proj$ is $P_8^{ext}(T)$, by rotational symmetry it suffices to consider the action of $\proj$ on $x \in R_{1,2}$. The rotated $L$-shape of $R_{1,2}$ allows us to construct a set $\Gamma_{1,2}$ whose points are all ``up-right'' of $R_{1,2}$ and have the property that $C = \gamma_{1,2} \cup \Gamma_{1,2}$ is a simple 4-connected cycle (see Fig. \ref{fig:P8_Proj}). By construction, the points $R(T)\setminus R_{1,2}$ are in the exterior of $C$. Each point $x \in R_{1,2}$ is projected down-left along a $45^{\circ}$, 8-connected ray. Applying the digital Jordan curve theorem to $C$ (Theorem \ref{thm:jct}), the projection ray of $x$ must contain a point in $\gamma_{1,2}$ or $\Gamma_{1,2}$. Since all points in $\Gamma_{1,2}$ are up-right of $x$, the projection ray of $x$ cannot contain a point in $\Gamma_{1,2}$, hence it must contain a point in $\gamma_{1,2}$. By construction, each point in $\gamma_{1,2}$ is $d_8$-adjacent to $T$ and \textit{no} points in the interior of $C$ are $d_8$-adjacent to $T$, so $\proj(x) \in \gamma_{1,2}$. Furthermore, since the projection rays are parallel, $\proj: R_{1,2} \to \gamma_{1,2}$ is injective.

 Since both $P_8^{ext}(T)$ and $R(T)$ are simple 4-connected cycles and the points $1,2,3,$ and $4$ are unique, $\gamma_{i, i+1} \cap \gamma_{j,j+1} = R_{i, i+1} \cap R_{j,j+1} = \{i, i+1\} \cap \{j,j+1\}$ if $i \neq j$. Of course, if $i = 4$ we set $i + 1 \equiv 1$. Since the intersection points $1,2,3,$ and $4$ are fixed under the projection, the injectivity of $\proj: R_{i,i+1} \to \gamma_{i,i+1}$ for $i \in [4]$ implies that $\proj: R(T) \to P_8^{ext}(T)$ is injective. 
\end{proof}

\begin{claim}
\label{claim1}
    Let $R$ be a rectangle with integer side lengths $\ell$ and $w$. Then
    \begin{enumerate}
        \item If $\ell + w = n$, $n \geq 2$, the area of $R$ is uniquely maximized (up to switching $\ell$ and $w$) if $\ell = \lceil\frac{n}{2} \rceil$ and $w = \lfloor \frac{n}{2}\rfloor$. 
        \item If $R$ has area $a$, the perimeter of $R$ is at least $2\big\lceil 2 \sqrt{a}\big\rceil$.
    \end{enumerate}
\end{claim}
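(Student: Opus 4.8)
The plan is to treat both parts as elementary consequences of the relationship between the sum and product of two numbers, handling the integrality constraints with care. The two parts are essentially dual: part (1) fixes the sum and maximizes the product, while part (2) fixes the product and minimizes the sum.

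For part (1), I would fix $\ell + w = n$ and rewrite the area via the identity $\ell w = \frac{(\ell+w)^2 - (\ell-w)^2}{4} = \frac{n^2 - (\ell-w)^2}{4}$. This exhibits the area as a strictly decreasing function of $(\ell-w)^2$, so maximizing the area is equivalent to minimizing $|\ell - w|$ over nonnegative integers $\ell, w$ with $\ell + w = n$. Since $\ell - w$ and $\ell + w = n$ differ by the even number $2w$, they have the same parity, so $|\ell - w|$ is minimized at $0$ when $n$ is even and at $1$ when $n$ is odd; in either case the minimizer is $\{\ell, w\} = \{\lceil \frac{n}{2}\rceil, \lfloor \frac{n}{2}\rfloor\}$. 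Because $(\ell-w)^2$ is strictly increasing in $|\ell - w|$, any other admissible pair yields a strictly smaller area, which gives the claimed uniqueness up to swapping $\ell$ and $w$.

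For part (2), I would invoke the AM--GM inequality: $\frac{\ell + w}{2} \geq \sqrt{\ell w} = \sqrt{a}$, hence $\ell + w \geq 2\sqrt{a}$. The key additional observation is that $\ell + w$ is a positive integer, so a real lower bound may be replaced by its ceiling: $\ell + w \geq \lceil 2\sqrt{a}\rceil$. Multiplying through by $2$ yields the perimeter bound $2(\ell + w) \geq 2\lceil 2\sqrt{a}\rceil$.

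Neither part presents a genuine obstacle; both reduce to a one-line inequality once the right algebraic identity or AM--GM is in hand. The only points demanding attention are the parity bookkeeping that pins down the exact maximizer and secures uniqueness in part (1), and the elementary but essential passage from a real-valued lower bound to its ceiling in part (2), which relies precisely on the integrality of $\ell + w$.
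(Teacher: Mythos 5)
Your proof is correct, and part (2) takes a genuinely different (and more direct) route than the paper. For part (1), your identity $\ell w = \frac{n^2 - (\ell-w)^2}{4}$ is just AM--GM in disguise, which is exactly the paper's tool; but the paper merely asserts that AM--GM ``shows (1) for integers $\ell, w$,'' whereas your parity bookkeeping on $\ell - w \equiv n \pmod 2$ actually pins down the integer maximizer and delivers the claimed uniqueness, so you have filled in details the paper leaves implicit. For part (2), the paper proceeds through part (1): it bounds the area $a(p)$ as a function of the perimeter with a case split on $p \bmod 4$, inverts that piecewise bound to get $p \geq 4\sqrt{a}$, and then rounds $p$ up to the nearest even number. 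You instead apply AM--GM directly to the semiperimeter, $\ell + w \geq 2\sqrt{a}$, and use the integrality of $\ell + w$ to conclude $\ell + w \geq \lceil 2\sqrt{a}\rceil$, hence $p \geq 2\lceil 2\sqrt{a}\rceil$. This bypasses the case analysis and the inversion entirely; both arguments ultimately rest on the same integrality fact (the semiperimeter is an integer, equivalently the perimeter is even), but yours reaches the stated bound in one line, while the paper's version has the side benefit of recording the exact piecewise formula for the maximal area $a(p)$, which is the form it reuses in Lemma \ref{lem:minperim8}.
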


\begin{proof}
    The AM-GM inequality implies $\ell w \leq (\frac{\ell+w}{2})^2 = \frac{n^2}{4}$, which shows (1) for integers $\ell, w$.

    \noindent Statement (2) is also straightforward, but we show its proof for completeness. If the perimeter and area of $R$ are $p$ and $a(p)$, respectively, the first statement implies
    \begin{equation}
    \label{eq:a(p)}
        a(p) \leq \begin{cases}
            \frac{p^2}{16}, & p \equiv 0 \mod 4,\\
            \frac{(p+2)(p-2)}{16}, & p \equiv 2 \mod 4.
        \end{cases}
    \end{equation}
    Inverting \eqref{eq:a(p)} yields
    \begin{equation*}
        p \geq \min\{4 \sqrt{a}, 2\sqrt{4a+1}\} = 4 \sqrt{a}.
    \end{equation*} 
    However, we know $p \equiv 0 \mod 2$, so we need to round up to the nearest even number. This gives the desired result.
\end{proof}

\section{Additional characterization of $F_\al$ and $F_\al^{-1}$}\label{piecewise}

\begin{claim}
\label{claim:left-cont}
    Suppose $F:\Z_{\geq 0} \to [0,1)$ is a nondecreasing function with $F(0) = 0$ and $F:\Z_{\geq 1} \to (0,1)$. Then $F^{-1}(x) := \min\{k \in \N : F(k) \geq x\}$ is left-continuous on $(0,1)$.
\end{claim}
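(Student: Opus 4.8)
The plan is to exploit the fact that $F^{-1}$ is a nondecreasing, integer-valued step function, so that left-continuity at a point $x_0 \in (0,1)$ reduces to showing that $F^{-1}$ is constant on some half-open interval $(x_0 - \delta, x_0]$. The structural observation I would isolate first is a description of the level sets of $F^{-1}$: with the convention $F(0) = 0$, one has $F^{-1}(x) = k$ precisely when $F(k-1) < x \le F(k)$, so each level set is a left-open, right-closed interval. Right-closedness at the breakpoints is exactly the content of left-continuity, and the fact that the intervals are left-open is what makes $F^{-1}$ fail to be right-continuous.

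Concretely, I would fix $x_0 \in (0,1)$ and set $k_0 := F^{-1}(x_0) = \min\{k \in \N : F(k) \ge x_0\}$. By definition of the minimum, $F(k_0) \ge x_0$ while $F(j) < x_0$ for every integer $j$ with $0 \le j < k_0$; since $F$ is nondecreasing this says $F(k_0 - 1) < x_0 \le F(k_0)$, where in the boundary case $k_0 = 1$ the lower inequality is supplied directly by the hypothesis $F(0) = 0 < x_0$. I would then show that $F^{-1}$ is identically equal to $k_0$ on the interval $(F(k_0-1), x_0]$: for any such $x$, on one hand $F(k_0) \ge x_0 \ge x$ forces $F^{-1}(x) \le k_0$, and on the other hand $F(j) \le F(k_0-1) < x$ for all $j \le k_0 - 1$ forces $F^{-1}(x) \ge k_0$, so $F^{-1}(x) = k_0$. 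Because $F(k_0-1) < x_0$ strictly, the set $(F(k_0-1), x_0]$ is a genuine left-neighborhood of $x_0$, and taking $\delta = x_0 - F(k_0-1) > 0$ yields $\lim_{x \to x_0^-} F^{-1}(x) = k_0 = F^{-1}(x_0)$, which is left-continuity.

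I do not expect a serious obstacle here; the argument is essentially bookkeeping, and the only genuinely load-bearing input is the strictness $F(k_0-1) < x_0$, which is precisely what produces a nondegenerate interval and distinguishes left- from right-continuity. Two points nonetheless warrant care. First is the boundary case $k_0 = 1$, where there is no $j \in \N$ with $j < k_0$, so the strict inequality must be read off from $F(0) = 0$ rather than from minimality. Second is the tacit assumption that $F^{-1}(x_0)$ is finite, i.e.\ that $\{k : F(k) \ge x_0\}$ is nonempty; this holds in every application in this paper because $F_\al(k) \to 1$ as $k \to \infty$, so $\sup_k F_\al(k) = 1 > x_0$. If one wishes to treat the claim in full generality, the empty-set case can be absorbed by noting that $F^{-1}$ then equals $+\infty$ on an entire left-neighborhood of $x_0$ (or tends to $+\infty$), so left-continuity in the extended reals is immediate.
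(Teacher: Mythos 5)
Your proof is correct and takes essentially the same route as the paper's: both fix $k_0 = F^{-1}(x_0)$, use minimality together with $F(0)=0$ to obtain the strict inequality $F(k_0-1) < x_0$, set $\delta = x_0 - F(k_0-1)$, and conclude that $F^{-1}$ is constantly $k_0$ on a left-neighborhood of $x_0$. Your explicit handling of the $k_0=1$ boundary case and of the finiteness of the minimum are minor refinements of details the paper leaves implicit.
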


Note that if the domain of $F$ is appropriately extended to $\R_{\geq 0}$, $F$ can be viewed as the cumulative distribution function of a discrete random variable, and this is a standard result. Nevertheless, we provide a short proof for completeness. 
    
\begin{proof}
    Fix $x_0 \in (0,1)$ and set $k_0 := F^{-1}(x_0) = \min\{k \in \N : F(k) \geq x_0\}$. By the minimality of $k_0$, $F(k_0-1) < x_0$, so we define $\delta := x_0-F(k_0-1) > 0$. Suppose $y$ satisfies $x_0-\delta < y < x_0$. Then $F(k_0-1) < y < x_0 \leq F(k_0)$. Since $F$ is nondecreasing, $F(i) \leq F(k_0-1)<y$ for all $0 \leq i \leq k_0-1$. But we also know $F(k_0) \geq y$, so the smallest natural number $j$ satisfying $F(j) \geq y$ is $j = k_0$. Hence $F^{-1}(y) = k_0$ for all $y \in (x_0-\delta, x_0)$. 
\end{proof}

\begin{lemma}[Piecewise formula for $F_4(k)$]
\label{lem:closedform}
For all $k, n \in \N$,
\begin{equation*}
    F_4(k) := \max_{1 \leq j \leq k} ~\frac{j}{j+\frac{\lceil 2\sqrt{2j-1}\rceil}{2}} = \begin{cases}
        \frac{s_0(n)}{s_0(n)+2n-1},& k \in [s_0(n), s_0(n)+\frac{n}{2})\\
        \frac{k}{k+2n-\frac{1}{2}},& k \in [s_0(n)+\frac{n}{2}, s_1(n))\\
        \frac{s_1(n)}{s_1(n)+2n-\frac{1}{2}},& k \in [s_1(n), s_1(n)+\frac{n}{2})\\
        \frac{k}{k+2n},& k \in [s_1(n)+\frac{n}{2}, s_2(n))\\
        \frac{s_2(n)}{s_2(n)+2n},&k \in [s_2(n), s_2(n)+\frac{n+1}{2})\\
        \frac{k}{k+2n+\frac{1}{2}},& k \in [s_2(n)+\frac{n+1}{2}, s_3(n))\\
        \frac{s_3(n)}{s_3(n)+2n+\frac{1}{2}},& k \in [s_3(n), s_3(n)+\frac{n+1}{2})\\
        \frac{k}{k+2n+1},& k \in [s_3(n)+\frac{n+1}{2}, s_0(n+1))
    \end{cases}
\end{equation*}
\end{lemma}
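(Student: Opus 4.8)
The plan is to compute $F_4(k)=\max_{1\le j\le k} f_4(j)$, where $f_4(j)=\frac{j}{j+\sigma_4(j)/2-1}$ and $\sigma_4(j)=\lceil 2\sqrt{2j-1}\rceil+2$, by locating the $j$ at which $f_4$ attains a running maximum. The starting point is that $\sigma_4$ is a nondecreasing step function. By the Claim that $|P_4(\mc{Q}_{k+1})|-|P_4(\mc{Q}_k)|$ equals $1$ exactly when $k=s_i(n)$ and $0$ otherwise, together with Lemma \ref{lem:maxarea} and Remark \ref{rem:Q=sieben}, the value $\sigma_4$ increases by exactly $1$ precisely as $k$ crosses each jump point $s_i(n)$ and is constant in between. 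Ordering the jump points as $\dots<s_0(n)<s_1(n)<s_2(n)<s_3(n)<s_0(n+1)<\dots$, one has $\sigma_4(s_i(n))=4n+i$, and $\sigma_4$ is constant on each block $\{s_i(n)+1,\dots,s_{i+1}(n)\}$ (with the convention $s_4(n):=s_0(n+1)$).

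On any such block $\sigma_4$ equals a constant $c$, so $f_4(j)=\frac{j}{j+c/2-1}$ is strictly increasing in $j$ (as $c/2-1>0$); hence its maximum over the block is attained at the right endpoint, a jump point. It follows that
\[
F_4(k)=\max\Big(f_4(k),\ \max\{f_4(s_i(n')):s_i(n')\le k\}\Big).
\]
Next I would show that the jump-point values $f_4(s_i(n))$, read off in increasing order of $s_i(n)$, form a strictly increasing sequence. Writing $b:=\sigma_4(s)/2-1$, the comparison $f_4(s)>f_4(s')$ for consecutive jump points $s>s'$ reduces, after clearing denominators, to $s\,b'>s'\,b$; for the four transitions $s_0\to s_1$, $s_1\to s_2$, $s_2\to s_3$, $s_3\to s_0(n{+}1)$ this simplifies to a strictly positive quadratic in $n$ (namely $n^2-2n+\tfrac12$, $n^2$, $n^2$, $n^2+2n+\tfrac12$), so the claim holds for $n\ge 2$. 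Consequently the inner maximum equals $f_4(s^\ast)$, where $s^\ast$ is the largest jump point not exceeding $k$, and therefore $F_4(k)=\max\big(f_4(k),f_4(s^\ast)\big)$.

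It remains to determine which term dominates and to match the resulting integer ranges to the eight displayed intervals. Solving $f_4(k)=f_4(s^\ast)$ with $f_4(k)=\frac{k}{k+b+1/2}$ (the offset on the block just past $s^\ast$) gives the real threshold $k^\ast=s^\ast+\frac{s^\ast}{2b}$: $F_4(k)$ equals the constant $f_4(s^\ast)$ for integers $k<k^\ast$ (the plateau regions, numerator $s_i(n)$) and equals $f_4(k)$ for integers $k\ge k^\ast$ (the increasing regions, numerator $k$). Evaluating $\frac{s^\ast}{2b}$ in the four cases yields $\tfrac n2-\tfrac{n-1}{2(2n-1)}$, $\tfrac n2-\tfrac{n}{2(4n-1)}$, $\tfrac n2$, and $\tfrac n2+\tfrac{n}{2(4n+1)}$, each differing by at most $\tfrac12$ from the claimed half-integer plateau width ($\tfrac n2,\tfrac n2,\tfrac{n+1}2,\tfrac{n+1}2$); a short parity check then shows the set of integers $\ge k^\ast$ coincides with the integers in the stated increasing interval, assembling the eight pieces. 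The small cases with $n=1$ (where $s_0(1)=s_1(1)=1$ makes the first two blocks degenerate) are checked directly against the formula.

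The main obstacle is this last step: the exact threshold $k^\ast$ and the clean half-integer endpoint $A=s_i(n)+w$ used in the statement need not coincide. They differ by at most $\tfrac12$, and the delicate case is $i=2$, where $k^\ast=s_2(n)+\tfrac n2$ while the stated width is $\tfrac{n+1}2$, so $k^\ast=A-\tfrac12$. When $n$ is even this places a single integer $k=s_2(n)+\tfrac n2$ in the gap $[k^\ast,A)$; the saving grace is that there $f_4(k)=f_4(s^\ast)$ exactly, so the plateau expression $\frac{n}{n+1}$ and the increasing expression $\frac{k}{k+2n+1/2}$ agree and either form of the piecewise definition returns the correct value, while for odd $n$ the gap contains no integer. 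Confirming that this off-by-$\tfrac12$ bookkeeping never yields a wrong value at any endpoint is the one place where care, rather than routine algebra, is required.
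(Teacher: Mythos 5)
Your proposal is correct and follows essentially the same route as the paper: both use Lemma \ref{lem:maxarea} to show $\sigma_4$ is constant on the blocks between the jump points $s_i(n)$, exploit strict monotonicity of $f_4$ on those blocks, and compute the exact crossover thresholds (your $k^\ast = s^\ast + \frac{s^\ast}{2b}$ is precisely the paper's $\delta_i(n)$ in the identities $f_4(s_i(n)) = f_4(s_i(n)+\delta_i(n))$), finishing with the parity bookkeeping at the half-integer endpoints and a separate check of the degenerate $n=1$ case. Your explicit treatment of the $i=2$, $n$ even boundary integer, where both piecewise expressions agree, matches the paper's comparison of $\delta_i(n)$ against $\frac{n}{2}$.
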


\begin{proof}
Recall that the integer sequences $s_i(n)$ are defined in Lemma \ref{lem:maxarea}. By Lemma \ref{lem:maxarea}, 
\begin{align}\label{eq:epsabcd}
    \sigma_4(s_0(n)) &= 4n, \nonumber\\
    \sigma_4(s_1(n)) &= \sigma_4(s_0(n)+1) = 4n+1, \nonumber\\
    \sigma_4(s_2(n)) &= \sigma_4(s_1(n)+1) = 4n+2,\\
    \sigma_4(s_3(n)) &= \sigma_4(s_2(n)+1) = 4n+3, \nonumber\\
    \sigma_4(s_0(n+1)) &= \sigma_4(s_3(n)+1) = 4n+4.\nonumber
\end{align}
Since $\sigma_4(k)$ is a nondecreasing function, this shows that $\sigma_4(k)$ is constant on the intervals $[s_0(n)+1,s_1(n)],~[s_1(n)+1,s_2(n)],~ [s_2(n)+1,s_3(n)],$ and $[s_3(n)+1,s_0(n+1)]$. This implies that $f_4(j) = \frac{j}{j+\frac{\sigma_4(j)}{2}-1}$ is a strictly increasing function on these intervals. To relate the values of $f_4$ between adjacent intervals, we compute the following identities for $n \geq 2$:
\begin{align}
\label{eq:fidentities}
    f_4(s_0(n)) &=f_4(s_0(n)+\delta_0(n)),~~\delta_0(n) := \frac{n}{2}-\frac{1}{4}+\frac{1}{4(2n-1)} \nonumber\\
    f_4(s_1(n)) &=f_4(s_1(n)+\delta_1(n)),~~\delta_1(n) := \frac{n}{2}-\frac{1}{8}-\frac{1}{8(4n-1)} \nonumber\\
    f_4(s_2(n)) &=f_4(s_2(n)+\delta_2(n)),~~\delta_2(n) := \frac{n}{2} \nonumber\\
    f_4(s_3(n)) &=f_4(s_3(n)+\delta_3(n)),~~\delta_3(n) := \frac{n}{2}+\frac{1}{8}-\frac{1}{8(4n+1)} \nonumber
\end{align}
The identities $f_4(s_i(n)) = f_4(s_i(n)+\delta_i(n))$, $i \in \{0,1,2,3\}$, may be easily verified using \eqref{eq:epsabcd}. Since $f_4(j)$ is a strictly increasing for $j \in [s_i(n)+1,s_{i+1}(n)]$, where $s_4(n):= s_0(n+1)$, we can use the above identities to precisely characterize the behavior of $F_4$. Observing that $\frac{n}{2} - \frac{1}{4} < \delta_i(n) < \frac{n}{2} + \frac{1}{4}$, it follows that $F_4(k) = f_4(s_j(n))$ for all $k \in [s_j(n), s_j(n)+ \frac{n}{2})$ and $F_4(k)= f_4(k)$ for all $k \in (s_j(n) +\frac{n}{2},s_{j+1}(n)]$. We compare the relative sizes of $f_4(s_i(n) + \frac{n}{2})$ and $f_4(s_i(n))$ by examining the exact value of $\delta_i(n)$ for each $i \in \{0,1,2,3\}$. Together, this information leads to the desired piecewise formula for all $n \geq 2$, which corresponds with $k \geq s_0(2) = 5$. We verify that the piecewise formula also holds for all $k \in [4]$ by separately checking the $n = 1$ case.  
\end{proof}

\begin{lemma}[Piecewise formula for $F_8(k)$]
\label{lem:closedform8}
For all $k, n \in \N$,
\begin{equation*}
    F_8(k) := \max_{1 \leq j \leq k} ~\frac{j}{j+\lceil 2\sqrt{j}\rceil + 1} = \begin{cases}
        \frac{t_0(n)}{t_0(n)+4n-1},& k \in [t_0(n), t_0(n)+n-1]\\
        \frac{k}{k+4n},& k \in [t_0(n)+n, t_1(n)-1]\\
        \frac{t_1(n)}{t_1(n)+4n},& k \in [t_1(n), t_1(n)+n-1]\\
        \frac{k}{k+4n+1},& k \in [t_1(n)+n, t_2(n)-1]\\
        \frac{t_2(n)}{t_2(n)+4n+1},&k \in [t_2(n), t_2(n)+n-1]\\
        \frac{k}{k+4n+2},& k \in [t_2(n)+n, t_3(n)-1]\\
        \frac{t_3(n)}{t_3(n)+4n+2},& k \in [t_3(n), t_3(n)+n-1]\\
        \frac{k}{k+4n+3},& k \in [t_3(n)+n, t_0(n+1)-1]
    \end{cases}
\end{equation*}
\end{lemma}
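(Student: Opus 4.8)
The plan is to mirror the proof of Lemma \ref{lem:closedform} for $F_4$, substituting the $8$-connected isoperimetric data for the $4$-connected data. Since $\sigma_8(k) = 2\lceil 2\sqrt{k}\rceil + 4$ (Lemma \ref{lem:minperim8}), we have $f_8(j) = \frac{j}{j+\frac{\sigma_8(j)}{2}-1} = \frac{j}{j+\lceil 2\sqrt{j}\rceil+1}$, so it suffices to track the running maximum $F_8(k) = \max_{1\le j\le k} f_8(j)$. First I would evaluate $\sigma_8$ at the critical sizes $t_i(n)$ from Lemma \ref{lem:maxarea8}, obtaining
\begin{align*}
\sigma_8(t_0(n)) &= 8n, & \sigma_8(t_1(n)) &= 8n+2,\\
\sigma_8(t_2(n)) &= 8n+4, & \sigma_8(t_3(n)) &= 8n+6,
\end{align*}
together with $\sigma_8(t_0(n+1)) = 8n+8$; each follows directly from the ceiling formula. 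The structural point is that $t_0(n), t_1(n), t_2(n), t_3(n)$ are exactly the integers at which $\lceil 2\sqrt{\cdot}\rceil$ increments, so the nondecreasing function $\sigma_8$ is constant on each interval $[t_i(n)+1, t_{i+1}(n)]$ (with $t_4(n) := t_0(n+1)$). On such an interval $f_8(j) = \frac{j}{j+c}$ for a fixed constant $c$, hence strictly increasing.

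Next, in analogy with the identities $f_4(s_i(n)) = f_4(s_i(n)+\delta_i(n))$, I would compute the crossover offset $\delta_i(n) := \frac{t_i(n)}{\frac{\sigma_8(t_i(n))}{2}-1}$, the unique value with $f_8(t_i(n)) = f_8(t_i(n)+\delta_i(n))$ along the subsequent increasing interval. A short computation yields
\begin{align*}
\delta_0(n) &= n - \tfrac{3n-1}{4n-1}, & \delta_1(n) &= n-\tfrac12,\\
\delta_2(n) &= n-\tfrac{n}{4n+1}, & \delta_3(n) &= n.
\end{align*}
The decisive step is to verify that $n-1 < \delta_i(n) \le n$ for every $i$, with equality precisely when $i=3$. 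This places the transition of the running maximum at the \emph{integer} $t_i(n)+n$: the maximum holds at the plateau value $f_8(t_i(n))$ for $k \in [t_i(n), t_i(n)+n-1]$ and equals $f_8(k)$ for $k \in [t_i(n)+n, t_{i+1}(n)-1]$. Substituting the plateau constants $\frac{\sigma_8(t_i(n))}{2}-1 \in \{4n-1, 4n, 4n+1, 4n+2\}$ into $f_8(t_i(n))$, and the increasing-piece constants $\frac{\sigma_8(t_{i+1}(n))}{2}-1 \in \{4n, 4n+1, 4n+2, 4n+3\}$ into the pieces $\frac{k}{k+c}$, reproduces the eight rows of the claimed formula.

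The hard part, as in the $F_4$ case, is the boundary behavior of the crossover offsets, here concentrated in the identity $\delta_3(n) = n$, where the plateau value and the increasing value coincide exactly at $k = t_3(n)+n$. I would handle this by checking algebraically that $f_8(t_3(n)) = f_8(t_3(n)+n)$ (both simplify to $\frac{4n^2+2n}{4n^2+6n+2}$), so that either expression is valid at the breakpoint and the piecewise formula is unambiguous. Finally, since Lemma \ref{lem:maxarea8} only tabulates site perimeters $p \ge 14$, the small cases ($n=1$, i.e. $k \le 8$) lie outside that table; but they follow immediately from the closed form $\sigma_8(k)=2\lceil 2\sqrt{k}\rceil+4$, which holds for all $k$, so I would verify them directly to extend the formula to all $k \in \N$.
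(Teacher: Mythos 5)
Your proposal is correct and follows essentially the same route as the paper's proof: identify the intervals $[t_i(n)+1,t_{i+1}(n)]$ on which $\sigma_8$ is constant (so $f_8$ is strictly increasing there), compute the crossover offsets---your $\delta_i(n)$ agree algebraically with the paper's $\Delta_i(n)$, e.g. $n-\tfrac{3n-1}{4n-1} = n-\tfrac34+\tfrac{1}{4(4n-1)}$---and take floors to locate the plateau/increasing breakpoints, treating the exact coincidence $\delta_3(n)=n$ at $k=t_3(n)+n$. Your extra care with the small cases is actually a slight improvement, since deriving $\sigma_8(t_i(n))$ from the closed form $2\lceil 2\sqrt{k}\rceil+4$ covers $n=1$ uniformly, whereas the paper's citation of Lemma \ref{lem:maxarea8} technically only covers $p\geq 14$.
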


\begin{proof}
Recall that the integer sequences $t_i(n)$ are defined in Lemma \ref{lem:maxarea8}. By Lemma \ref{lem:maxarea8}, 
\begin{align}
\label{eq:sigma8vals}
    \sigma_8(t_0(n)) &= 8n, \nonumber\\
    \sigma_8(t_1(n)) &= \sigma_8(t_0(n)+1) = 8n+2, \nonumber\\
    \sigma_8(t_2(n)) &= \sigma_8(t_1(n)+1) = 8n+4,\\
    \sigma_8(t_3(n)) &= \sigma_8(t_2(n)+1) = 8n+6, \nonumber\\
    \sigma_8(t_0(n+1)) &= \sigma_8(t_3(n)+1) = 8n+8.\nonumber
\end{align}
Since $\sigma_8(k)$ is a nondecreasing function, this shows that $\sigma_8(k)$ is constant on the intervals $[t_0(n)+1,t_1(n)],~[t_1(n)+1,t_2(n)],~ [t_2(n)+1,t_3(n)],$ and $[t_3(n)+1,t_0(n+1)]$. This implies that $f_8(j) = \frac{j}{j+\frac{\sigma_8(j)}{2}-1}$ is a strictly increasing function on these intervals. To relate the values of $f_8$ between adjacent intervals, we compute the following identities for all integers $n \geq 1$:
\begin{align}
\label{eq:fidentities}
    f_8(t_0(n)) &=f_8(t_0(n)+\Delta_0(n)),~~\Delta_0(n) := n-\frac{3}{4}+\frac{1}{4(4n-1)} \nonumber\\
    f_8(t_1(n)) &=f_8(t_1(n)+\Delta_1(n)),~~\Delta_1(n) := n-\frac{1}{2} \nonumber\\
    f_8(t_2(n)) &=f_8(t_2(n)+\Delta_2(n)),~~\Delta_2(n) := n-\frac{1}{4} + \frac{1}{4(4n+1)}\nonumber\\
    f_8(t_3(n)) &=f_8(t_3(n)+\Delta_3(n)),~~\Delta_3(n) := n \nonumber
\end{align}

These identities $f_8(t_i(n)) = f_8(t_i(n)+\Delta_i(n))$, $i \in \{0,1,2,3\}$, may be easily verified using \eqref{eq:sigma8vals}. Since $f_8(j)$ is a strictly increasing for $j \in [t_i(n)+1,t_{i+1}(n)]$, where $t_4(n):= t_0(n+1)$, we can use the above identities to precisely characterize the behavior of $F_8$. In particular, it must be the case that $F_8(k) = f_8(t_i(n))$ for all $k \in [t_i(n), t_i(n)+ \lfloor \Delta_i(n) \rfloor]$ and $F_8(k) = f_8(k)$ for all $k \in [t_i(n)+ \lfloor \Delta_i(n) \rfloor + 1, t_{i+1}(n)]$. Simplifying $\lfloor \Delta_i(n) \rfloor$ affords the desired piecewise formula.  
\end{proof}

\begin{lemma} \label{lem:F4inv} If $d \in (0,\frac{1}{2}]$, $F_4^{-1}(d) = 1$. For all $d \in (\frac{1}{2},1)$,
\begin{equation*}
    F_4^{-1}(d) = \lceil h_4(d) \rceil, ~~~~ h_4(d) = \frac{d}{2(1-d)}\cdot \begin{cases}
        4n-1, & d \in \big(\frac{s_0(n)}{s_0(n)+2n-1}, \frac{s_1(n)}{s_1(n)+2n - 1/2}\big]\\
        4n, & d \in \big(\frac{s_1(n)}{s_1(n)+2n - 1/2}, \frac{s_2(n)}{s_2(n)+2n}\big]\\
        4n+1, & d \in \big(\frac{s_2(n)}{s_2(n)+2n}, \frac{s_3(n)}{s_3(n)+2n+1/2}\big]\\
        4n+2, & d \in \big(\frac{s_3(n)}{s_3(n)+2n+1/2}, \frac{s_0(n+1)}{s_0(n+1)+2n+1}\big]\\
    \end{cases}
\end{equation*}
\end{lemma}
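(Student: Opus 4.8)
The plan is to start from the identity $F_4^{-1}(d) = \min\{k \in \N : f_4(k) \geq d\}$, already recorded in the proof of Corollary \ref{cor:bounds} and valid because $F_4(k) = \max_{1 \le j \le k} f_4(j)$ is the running maximum of $f_4$. Writing $g(d) := \frac{d}{2(1-d)}$, a strictly increasing bijection from $(0,1)$ onto $(0,\infty)$, and recalling that $\frac{\sigma_4(j)}{2} - 1 = \frac{\lceil 2\sqrt{2j-1}\rceil}{2}$, the inequality $f_4(k) \geq d$ rearranges to
\[ k \;\geq\; g(d)\,\bigl(\sigma_4(k) - 2\bigr). \]
The base case $d \in (0,\tfrac12]$ is immediate, since $f_4(1) = \tfrac12 \geq d$ and $k=1$ is minimal, giving $F_4^{-1}(d) = 1$. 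For $d > \tfrac12$ the obstruction is that the right-hand side still depends on $k$ through $\sigma_4(k)$, so the first task is to localize the minimal solution to a range on which $\sigma_4$ is constant.

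Next I would partition $(\tfrac12,1)$ using the jump values $F_4(s_i(n)) = f_4(s_i(n))$. A direct check confirms the endpoints in the statement are exactly these values: e.g.\ $f_4(s_1(n)) = \frac{s_1(n)}{s_1(n)+2n-1/2}$, and likewise for $s_0,s_2,s_3$, via \eqref{eq:epsabcd}. Fix $d$ in one interval, say $d \in (F_4(s_i(n)), F_4(s_{i+1}(n))]$. Since $F_4(s_i(n)) = \max_{j \le s_i(n)} f_4(j)$, no $k \le s_i(n)$ satisfies $f_4(k) \ge d$, so the minimal solution lies strictly above $s_i(n)$. By \eqref{eq:epsabcd}, $\sigma_4$ is constant on $[s_i(n)+1, s_{i+1}(n)]$ with value $\sigma_4(s_{i+1}(n)) = \sigma_4(s_i(n)) + 1$; setting $c := \sigma_4(s_{i+1}(n)) - 2$, the cases $i=0,1,2,3$ give exactly $c = 4n-1,\,4n,\,4n+1,\,4n+2$, matching the claimed formula. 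On this interval the inequality $f_4(k)\ge d$ simplifies to the $k$-free condition $k \ge g(d)\,c$, whose least integer solution is $\lceil g(d)\,c\rceil$.

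The main obstacle—and the only genuine computation—is verifying that $\lceil g(d)\,c\rceil$ truly lands in $[s_i(n)+1, s_{i+1}(n)]$, so that the constant value of $\sigma_4$ used above is legitimate. I would check both endpoints of the $d$-interval. At the right endpoint $d = F_4(s_{i+1}(n))$ one gets $g(d) = \frac{s_{i+1}(n)}{\sigma_4(s_{i+1}(n))-2} = \frac{s_{i+1}(n)}{c}$, so $g(d)\,c = s_{i+1}(n)$ exactly; since $g$ is increasing, $g(d)\,c \le s_{i+1}(n)$ throughout, hence $\lceil g(d)\,c\rceil \le s_{i+1}(n)$. At the left endpoint $d = F_4(s_i(n))$ one computes $g(d)\,c = s_i(n)\cdot\frac{\sigma_4(s_{i+1}(n))-2}{\sigma_4(s_i(n))-2} = s_i(n)\cdot\frac{\sigma_4(s_i(n))-1}{\sigma_4(s_i(n))-2} > s_i(n)$, and monotonicity forces $g(d)\,c > s_i(n)$ on the whole half-open interval, so $\lceil g(d)\,c\rceil \ge s_i(n)+1$. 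Combining these bounds with the localization step shows $\lceil g(d)\,c\rceil$ is the global minimizer (every smaller $k$ fails, either by $f_4(k) \le F_4(s_i(n)) < d$ or by $k < g(d)\,c$), and ranging over $i \in \{0,1,2,3\}$ and $n \in \N$—whose intervals telescope to $(F_4(s_0(1)), 1) = (\tfrac12,1)$—yields the piecewise formula, which glues continuously to the $d \le \tfrac12$ case at the shared endpoint $F_4(s_0(1)) = \tfrac12$.
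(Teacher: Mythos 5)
Your argument is correct in its core and is essentially the intended derivation: the paper states this lemma without proof, and it is meant to follow by inverting the piecewise description of $F_4$, using exactly the ingredients you use --- the identity $F_4^{-1}(d)=\min\{k \in \N : f_4(k)\geq d\}$ recorded in the proof of Corollary \ref{cor:bounds}, the constancy of $\sigma_4$ on the blocks $[s_i(n)+1,s_{i+1}(n)]$, and the evaluation of $f_4$ at the block endpoints. Your localization step and both endpoint computations (that $\lceil g(d)c\rceil$ lands in $[s_i(n)+1,s_{i+1}(n)]$) check out.

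Two points need patching. First, the equality $F_4(s_i(n)) = f_4(s_i(n))$ on which your localization rests is not automatic from the definition of $F_4$ as a running maximum: it requires knowing $f_4(j)\leq f_4(s_i(n))$ for every $j\leq s_i(n)$, i.e.\ that the endpoint values themselves are nondecreasing across blocks. This is exactly Lemma \ref{lem:closedform} evaluated at $k=s_i(n)$, so you should cite that lemma (or verify the monotonicity of the endpoint sequence directly) rather than treat the ``jump values'' as given. Second, and more concretely, \eqref{eq:epsabcd} is derived from Lemma \ref{lem:maxarea}, which only applies for site perimeter $p\geq 8$, i.e.\ $n\geq 2$; at $n=1$ it genuinely fails, since $s_0(1)=s_1(1)=1$ and $\sigma_4(s_1(1))=4\neq 5$. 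Consequently your ``direct check'' that the stated interval endpoints equal $f_4(s_i(n))$ is false for $(i,n)=(0,1)$ and $(1,1)$: the stated endpoint $\frac{s_1(1)}{s_1(1)+3/2}=\frac{2}{5}$ is not $f_4(s_1(1))=\frac{1}{2}$. This does not sink the lemma, because those two intervals do not meet $(\frac{1}{2},1)$ (the first is empty and the second is $(\frac{2}{5},\frac{1}{2}]$), and the entries of \eqref{eq:epsabcd} your argument actually needs for the $n=1$ cases inside $(\frac{1}{2},1)$ --- namely $\sigma_4(s_2(1))=6$, $\sigma_4(s_3(1))=7$, $\sigma_4(s_0(2))=8$ --- do hold. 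But since your proof quantifies over all $n\in\N$ uniformly, it needs the same separate $n=1$ verification that the paper performs at the end of the proof of Lemma \ref{lem:closedform}.
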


\begin{lemma} \label{lem:F8inv} If $d \in (0,\frac{1}{4}]$, $F_8^{-1}(d) = 1$. For all $d \in (\frac{1}{4},1)$,
\begin{equation*}
    F_8^{-1}(d) = \lceil h_8(d) \rceil, ~~~~ h_8(d) = \frac{d}{1-d}\cdot \begin{cases}
        4n, & d \in \big(\frac{t_0(n)}{t_0(n)+4n-1}, \frac{t_1(n)}{t_1(n)+4n}\big]\\
        4n+1, & d \in \big(\frac{t_1(n)}{t_1(n)+4n}, \frac{t_2(n)}{t_2(n)+4n+1}\big]\\
        4n+2, & d \in \big(\frac{t_2(n)}{t_2(n)+4n+1}, \frac{t_3(n)}{t_3(n)+4n+2}\big]\\
        4n+3, & d \in \big(\frac{t_3(n)}{t_3(n)+4n+2}, \frac{t_0(n+1)}{t_0(n+1)+4n+3}\big]\\
    \end{cases}
\end{equation*}
\end{lemma}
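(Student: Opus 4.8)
The plan is to invert the explicit piecewise description of $F_8$ from Lemma \ref{lem:closedform8}, using throughout that $F_8^{-1}(d)=\min\{k\in\N:F_8(k)\geq d\}$ and that $F_8^{-1}$ is left-continuous (Claim \ref{claim:left-cont}). The base case is immediate: $F_8(1)=\frac{1}{1+\lceil 2\sqrt1\rceil+1}=\frac14$ and $\N$ starts at $1$, so $F_8^{-1}(d)=1$ exactly when $d\in(0,\frac14]$, and $F_8^{-1}(d)\geq 2$ once $d>\frac14$. Since $t_0(1)=1$ gives $F_8(t_0(1))=\frac14$, the threshold $\frac14$ is precisely the left endpoint of the first interval of $h_8$, so the two regimes meet without a gap.

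The key step I would carry out first is to rewrite the closed form of Lemma \ref{lem:closedform8} on each block $[t_i(n),t_{i+1}(n)]$ (with $t_4(n):=t_0(n+1)$) as a single maximum
\begin{equation*}
    F_8(k)=\max\!\left(\frac{t_i(n)}{t_i(n)+c-1},\ \frac{k}{k+c}\right),\qquad c\in\{4n,\,4n+1,\,4n+2,\,4n+3\},
\end{equation*}
where the first (plateau) term equals the tabulated value $F_8(t_i(n))$ and the second is the strictly increasing branch. The two terms are equal exactly when $k=\frac{c\,t_i(n)}{c-1}=t_i(n)+\Delta_i(n)$, which is the crossover already computed in the proof of Lemma \ref{lem:closedform8}; the plateau wins for $k\leq t_i(n)+\lfloor\Delta_i(n)\rfloor$ and the branch wins afterward, reproducing the table. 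This max-representation is the crucial simplification, since it removes the need to treat plateaus and branches as separate cases when inverting.

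Given this representation, fix $d$ in the interval $\left(\tfrac{t_i(n)}{t_i(n)+c-1},\tfrac{t_{i+1}(n)}{t_{i+1}(n)+c}\right]$, whose endpoints are the consecutive plateau values $F_8(t_i(n))$ and $F_8(t_{i+1}(n))$. Because $d$ strictly exceeds the plateau term, only the branch term can reach $d$, and $\frac{k}{k+c}\geq d\iff k\geq\frac{cd}{1-d}$, so $k^\ast:=\lceil\frac{cd}{1-d}\rceil=\lceil h_8(d)\rceil$ is the natural candidate. To justify $F_8^{-1}(d)=k^\ast$ I would check that $k^\ast$ lies in the branch part of the block: the left bound $d>\frac{t_i(n)}{t_i(n)+c-1}$ gives $\frac{cd}{1-d}>\frac{c\,t_i(n)}{c-1}$ (strictly past the crossover), and the right bound $d\leq\frac{t_{i+1}(n)}{t_{i+1}(n)+c}$ gives $\frac{cd}{1-d}\leq t_{i+1}(n)$; together $k^\ast\in\big(\frac{c\,t_i(n)}{c-1},\,t_{i+1}(n)\big]$. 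There $F_8(k^\ast)=\frac{k^\ast}{k^\ast+c}\geq d$, while every smaller $k$ either sits on the same branch with $\frac{k}{k+c}<d$ or has $F_8(k)\leq F_8(t_i(n))<d$ by monotonicity---so $k^\ast$ is indeed the minimizer. Substituting the four values of $c$ and noting that the four interval families chain together (the right endpoint for offset $c$ is the left endpoint for offset $c+1$) to partition $(\frac14,1)$ yields the stated formula.

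The hard part will be establishing the max-representation uniformly, in particular reconciling the crossover $t_i(n)+\Delta_i(n)$ with the integer interval boundaries of Lemma \ref{lem:closedform8}. This hinges on the degenerate case $i=3$, where $\Delta_3(n)=\frac{t_3(n)}{4n+2}=n$ is an integer, so the plateau and branch meet exactly at the lattice point $t_3(n)+n$; there the table writes this point using the branch formula, and the two expressions agree, so the max-representation still holds. Once this bookkeeping is pinned down---together with confirming that the interval endpoints really are the consecutive plateau values $F_8(t_i(n))$ coming from the explicit $t_i$ of Lemma \ref{lem:maxarea8}---the inversion above is routine.
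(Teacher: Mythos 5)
The paper states Lemma \ref{lem:F8inv} without proof, treating it as a routine inversion of the piecewise formula in Lemma \ref{lem:closedform8}; your argument supplies exactly that intended inversion and is correct. The max-representation $F_8(k)=\max\bigl(\tfrac{t_i(n)}{t_i(n)+c-1},\tfrac{k}{k+c}\bigr)$ on each block, the crossover bookkeeping (including the degenerate case $\Delta_3(n)=n$, where plateau and branch meet at the lattice point $t_3(n)+n$), and the chaining of the interval endpoints across consecutive blocks and consecutive values of $n$ all check out, so $F_8^{-1}(d)=\lceil h_8(d)\rceil$ follows as you describe.
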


\section{Additional constructions}

\begin{figure}[ht]
    \centering
    \includegraphics[width=0.71\textwidth]{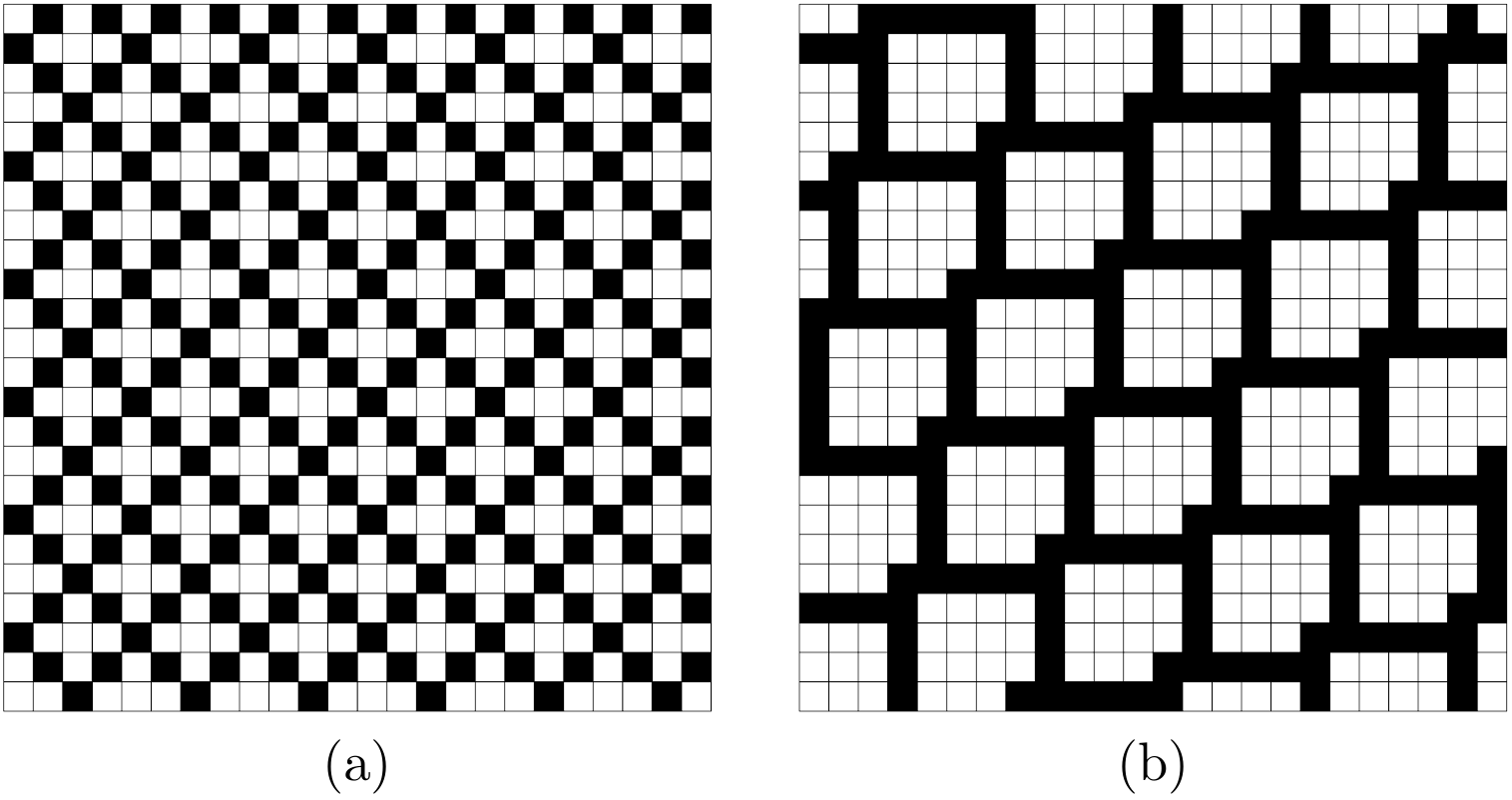}
    \caption{(a) and (b) are $24 \times 24$ pixel binary images (with white pixel density $\frac{5}{8}$) that illustrate tightness of all the Theorem \ref{thm:bounds} bounds at $d = \frac{5}{8}$. In the 8-connected case, the image example is a $4 \times 4$ grid of translated copies of (b).}
    \label{fig:5-15}
\end{figure}

\newpage

\begin{remark}
\label{rem:ABlattices}~\\
\begin{enumerate}
\item $\PP_4(\mc{A}_{k,n})$ tiles the plane with respect to the lattice generated by $\mathscr{A}_{k,n} := \begin{pmatrix}
k & -n \\
k & n
\end{pmatrix}$. \label{Atiling}
\item $\PP_4(\mc{B}_{k,n})$ tiles the plane with respect to the lattice generated by $\mathscr{B}_{k,n} := \begin{pmatrix}
k & -n \\
k+1 & n+1
\end{pmatrix}$. \label{Btiling}
\end{enumerate}
\end{remark}

\begin{figure}[ht]
\centering
\includegraphics[width=0.6\textwidth]{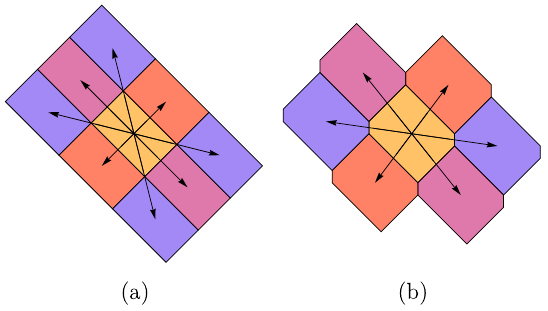}
\caption{(a) and (b) illustrate the surroundings of each tile in the Remark \eqref{rem:ABlattices} \eqref{Atiling} and \eqref{Btiling} lattice tilings, respectively. The corresponding translation vectors are also shown.}
\label{fig:tiling2}
\end{figure}

\end{document}